\theoremstyle{plain}
\newtheorem{definition}{Definition}
\newtheorem{thm}[definition]{Theorem}
\newtheorem{lem}[definition]{Lemma}
\newtheorem{cor}[definition]{Corollary}
\begin{document}
\title[group determinant and group algebra]{Factorization of group determinant in some group algebras}
\author[N. Yamaguchi]{Naoya Yamaguchi}
\date{\today}
\keywords{group determinant; group algebra; dihedral group; generalized quaternion group; circulant determinant.}
\subjclass[2010]{Primary 20C15; Secondary 15A15; 22D20.}

\maketitle

\begin{abstract}
We give an analog of Frobenius' theorem about the factorization of the group determinant on the group algebra of finite abelian groups 
and we extend it into dihedral groups and generalized quaternion groups. 
Furthermore, we describe the group determinant of dihedral groups and generalized quaternion groups as a circulant determinant of homogeneous polynomials. 

This analog on the group algebra is stronger than Frobenius's theorem 
and as a corollary, we obtain a simple expression formula for inverse elements in the group algebra. 
Furthermore, the commutators of irreducible factors of the factorization of the group determinant on the group algebra corresponding to degree one representations have interesting algebraic properties. 
From this result, we know that degree one representations form natural pairing.

At the current stage, the extension of Frobeinus' theorem is not represent as a determinant. 
We expect to find a determinant expression similar to Frobenius' theorem. 
\end{abstract}

\section{\bf{INTRODUCTION}}
In this paper, we give an analog of the factorization of the group determinant on the group algebra, 
where the group $G$ is a finite abelian group, a dihedral group or a generalized quaternion group. 
The group determinant $\Theta(G)$ is the determinant of the matrix whose elements are independent variables $x_{g}$ corresponding to $g \in G$. 
Frobenius gave the following theorem about the factorization of the group determinant. 

\begin{thm}[Frobenius]\label{thm:1.1.1}
Let $G$ be a finite group and $\widehat{G}$ be a complete set of irreducible representations of G. 
Then, we have 
$$
\Theta(G) = \prod_{\varphi \in \widehat{G}} \det{\left( \sum_{g \in G} \varphi (g) x_{g} \right)^{\deg{\varphi}}}. 
$$
\end{thm}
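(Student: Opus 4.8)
The plan is to identify the group determinant with the determinant of the ``generic'' element of the group algebra acting on itself by left multiplication, and then to invoke the decomposition of the regular representation into irreducibles.

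First I would set up notation. Let $R = \mathbb{C}[x_g : g \in G]$ be the polynomial ring in the independent variables $x_g$, and consider the element $\mathfrak{x} = \sum_{g \in G} x_g\, g$ of the group algebra $R[G]$. Writing $L$ for the left regular representation $a \mapsto (b \mapsto ab)$ expressed in the basis $\{g\}_{g \in G}$, one checks directly that $L(\mathfrak{x})$ is precisely (a relabelling of) the matrix $(x_{gh^{-1}})_{g,h \in G}$ defining $\Theta(G)$, so that $\Theta(G) = \det L(\mathfrak{x})$.

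Next, by Maschke's theorem $\mathbb{C}[G]$ is semisimple, and over $\mathbb{C}$ the regular representation decomposes as $L \cong \bigoplus_{\varphi \in \widehat{G}} \varphi^{\oplus \deg \varphi}$. Concretely, there is a single invertible complex matrix $P$, independent of the variables $x_g$, that simultaneously conjugates every $L(g)$ into the block-diagonal form $\bigoplus_{\varphi \in \widehat{G}} \varphi(g)^{\oplus \deg \varphi}$; such a $P$ exists because it block-diagonalizes the finitely many matrices $L(g)$ at once. By $R$-linearity, the same $P$ conjugates $L(\mathfrak{x}) = \sum_{g \in G} x_g L(g)$ into $\bigoplus_{\varphi \in \widehat{G}} \bigl(\sum_{g \in G} \varphi(g) x_g\bigr)^{\oplus \deg \varphi}$. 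Since the determinant is invariant under conjugation and multiplicative across diagonal blocks,
$$
\Theta(G) = \det L(\mathfrak{x}) = \prod_{\varphi \in \widehat{G}} \det\!\left( \sum_{g \in G} \varphi(g) x_g \right)^{\deg \varphi},
$$
which is the claimed identity in $R$; no specialization argument is needed because the $x_g$ were taken generic from the outset. I would also note that $\det\bigl(\sum_{g \in G} \varphi(g) x_g\bigr)$ is unchanged if $\varphi$ is replaced by an equivalent representation, so the right-hand side depends only on $\widehat{G}$ as asserted.

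The one genuinely substantive ingredient is the decomposition of the regular representation with multiplicities equal to the degrees --- i.e. the Wedderburn structure theorem for $\mathbb{C}[G]$ together with complete reducibility; everything else (the identification $\Theta(G) = \det L(\mathfrak{x})$ and the behaviour of determinants under conjugation and direct sums) is formal. The only point requiring a little care is that the conjugating matrix $P$ must be chosen once and for all over $\mathbb{C}$, independently of the $x_g$, which is exactly why one works with the regular representation rather than attempting to diagonalize $L(\mathfrak{x})$ directly.
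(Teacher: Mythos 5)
Your argument is correct and complete: the identification $\Theta(G)=\det L(\mathfrak{x})$ for the generic group-algebra element, followed by the simultaneous block-diagonalization of the left regular representation into $\bigoplus_{\varphi\in\widehat{G}}\varphi^{\oplus\deg\varphi}$ over $\mathbb{C}$ and the multiplicativity of the determinant over blocks, is the standard modern proof of Frobenius's factorization. The paper itself states this theorem as a classical result and gives no proof at all (it is quoted again as Theorem 2.1.1 without argument), so there is nothing in the source to compare against; your write-up supplies exactly the proof the paper implicitly relies on, and your care about choosing the conjugating matrix $P$ over $\mathbb{C}$, independently of the variables $x_g$, is the right point to emphasize.
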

Our main results of this paper are non-trivial extensions of this theorem for some group on the corresponding group algebras.

\subsection{Results for finite abelian groups}
Our main result for finite abelian group is the following theorem. 
\begin{thm}\label{thm:1.1.2}
Let $G$ be a finite abelian group and $e$ be the unit element of $G$. 
Then, we have 
$$
\Theta(G) e = \prod_{\chi \in \widehat{G}} \sum_{g \in G} \chi(g) x_{g} g. 
$$
\end{thm}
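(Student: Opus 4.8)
The plan is to pass to the standard idempotent decomposition of the group algebra. Work over the polynomial ring $A = \mathbb{C}[x_g : g \in G]$ and form the (commutative, since $G$ is abelian) group algebra $A[G]$, in which the asserted identity lives. For each $\chi \in \widehat{G}$ put
$$
e_\chi = \frac{1}{|G|}\sum_{g \in G}\overline{\chi(g)}\, g \in \mathbb{C}[G] \subseteq A[G].
$$
These are the usual primitive orthogonal idempotents: $e_\chi e_\psi = \delta_{\chi\psi} e_\chi$, and $\sum_{\chi \in \widehat{G}} e_\chi = e$ by column orthogonality of characters. The one elementary computation I need is $h\, e_\chi = \chi(h) e_\chi$ for every $h \in G$, obtained by the substitution $g \mapsto h^{-1}g$ in the defining sum together with $\chi(h)^{-1} = \overline{\chi(h)}$.

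Granting this, I would evaluate each factor of the right-hand side on a component. For $\chi, \psi \in \widehat{G}$,
$$
\left(\sum_{g \in G}\chi(g) x_g\, g\right) e_\psi = \sum_{g \in G}\chi(g)\psi(g) x_g\, e_\psi = \left(\sum_{g \in G}(\chi\psi)(g) x_g\right) e_\psi .
$$
Write $\lambda_\mu := \sum_{g \in G}\mu(g) x_g \in A$. Multiplying the above identities over all $\chi \in \widehat{G}$ (legitimate because $A[G]$ is commutative and $e_\psi$ is idempotent) gives, for $P := \prod_{\chi \in \widehat{G}}\sum_{g \in G}\chi(g) x_g\, g$,
$$
P e_\psi = \left(\prod_{\chi \in \widehat{G}}\lambda_{\chi\psi}\right) e_\psi = \left(\prod_{\mu \in \widehat{G}}\lambda_\mu\right) e_\psi = D\, e_\psi , \qquad D := \prod_{\mu \in \widehat{G}}\sum_{g \in G}\mu(g) x_g ,
$$
where I used that $\chi \mapsto \chi\psi$ permutes $\widehat{G}$, so the product is independent of $\psi$. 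Summing over $\psi$ and invoking $\sum_\psi e_\psi = e$ yields $P = D e$.

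It remains to identify $D$ with $\Theta(G)$. Since $G$ is abelian, the irreducible representations are exactly the characters $\widehat{G}$, each of degree one, so Frobenius' theorem (Theorem~\ref{thm:1.1.1}) gives $\Theta(G) = \prod_{\chi \in \widehat{G}}\det\!\left(\sum_{g \in G}\chi(g) x_g\right) = \prod_{\chi \in \widehat{G}}\lambda_\chi = D$. (One could instead avoid Theorem~\ref{thm:1.1.1} entirely: under the algebra isomorphism $A[G] \cong \prod_{\chi}A$ determined by $g \mapsto (\chi(g))_\chi$, left multiplication by $\sum_g x_g g$ — whose matrix in the basis $G$ is precisely the group matrix — becomes the diagonal operator with entries $\lambda_\chi$, so its determinant $\Theta(G)$ equals $D$.) Hence $\Theta(G) e = D e = P$, which is the claim. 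I do not anticipate a genuine obstacle here; the only points requiring care are bookkeeping ones — that $|G|$ is invertible so the $e_\chi$ make sense in $A[G]$ (hence the choice of coefficient field $\mathbb{C}$), and the reindexing $\{\chi\psi : \chi \in \widehat{G}\} = \widehat{G}$ that makes $P e_\psi$ independent of $\psi$.
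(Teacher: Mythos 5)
Your proof is correct and follows essentially the same route as the paper: the primitive idempotents $e_\chi$ realize internally the Fourier-transform isomorphism $\mathbb{C}G \cong \prod_{\chi}\mathbb{C}$ that the paper invokes, your identity $\bigl(\sum_{g}\chi(g)x_g g\bigr)e_\psi = \lambda_{\chi\psi}e_\psi$ is exactly the paper's computation $\widehat{\alpha_{\chi_k}}(\chi_l) = \alpha^{*}_{\chi_k\chi_l^{-1}}$, and both arguments finish by the same reindexing of $\widehat{G}$ plus Frobenius' theorem. The only difference is cosmetic: you verify the decomposition by hand where the paper cites it as a known ring isomorphism.
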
 
Let 
$$
\mathbb{C} G = \left\{ \sum_{g \in G} x_{g} g \: \vert \: x_{g} \in \mathbb{C}, g \in G \right\}
$$ 
be the group algebra of G. 
Note that the equality in Theorem~$\ref{thm:1.1.2}$ is the equality on the group algebra. 
Theorem~$\ref{thm:1.1.2}$ is stronger than Theorem~$\ref{thm:1.1.1}$. 
In fact, let $F: \mathbb{C} G \rightarrow \mathbb{C}$ be a $\mathbb{C}$-linear ring homomorphism such that $F(g) = 1$ for all $g \in G$. 
Theorem $\ref{thm:1.1.1}$ for finite abelian groups follows by applying $F$ to Theorem~$\ref{thm:1.1.2}$. 
Moreover, we obtain the following formula for inverse elements in the group algebra. 

\begin{cor}\label{cor:1.1.3}
Let $G$ be a finite abelian group and $\chi_{1}$ be the trivial representation of $G$. 
If $\Theta(G) \neq 0$, then we have 
$$
\left( \sum_{g \in G} x_{g} g \right)^{-1} = \frac{1}{\Theta(G)} \prod_{\chi \in \widehat{G} \setminus \{ \chi_{1} \} } \left( \sum_{g \in G} \chi (g) x_{g} g \right). 
$$
\end{cor}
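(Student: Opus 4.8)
The plan is to derive Corollary~\ref{cor:1.1.3} directly from Theorem~\ref{thm:1.1.2} by isolating the factor corresponding to the trivial representation. First I would observe that for the trivial character $\chi_{1}$ one has $\chi_{1}(g) = 1$ for all $g \in G$, so that
$$
\sum_{g \in G} \chi_{1}(g) x_{g} g = \sum_{g \in G} x_{g} g,
$$
which is precisely the generic element of $\mathbb{C}G$ whose inverse we want. Splitting off this factor from the product in Theorem~\ref{thm:1.1.2}, and using that $\mathbb{C}G$ is a commutative ring (since $G$ is abelian) so the factors may be reordered freely, we get
$$
\Theta(G)\, e = \left( \sum_{g \in G} x_{g} g \right) \prod_{\chi \in \widehat{G} \setminus \{ \chi_{1} \}} \left( \sum_{g \in G} \chi(g) x_{g} g \right).
$$

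Next I would use the hypothesis $\Theta(G) \neq 0$: since $\Theta(G)$ is a nonzero scalar (here the $x_{g}$ are being treated either as complex numbers with $\Theta(G)\neq 0$, or one passes to the field of rational functions in the $x_g$), we may divide both sides by $\Theta(G)$, obtaining that $e$, the multiplicative identity of $\mathbb{C}G$, equals the product of $\sum_{g} x_{g} g$ with $\frac{1}{\Theta(G)}\prod_{\chi \neq \chi_{1}} \left( \sum_{g} \chi(g) x_{g} g \right)$. By commutativity this exhibits the latter element as a two-sided inverse of $\sum_{g} x_{g} g$, which is exactly the claimed formula. Since inverses in a ring are unique when they exist, this both proves invertibility and identifies the inverse.

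The only subtle point — and the step I would be most careful about — is the interpretation of the statement: the $x_{g}$ must be regarded as actual complex numbers (not indeterminates) for the phrase ``$\Theta(G) \neq 0$'' and for division by $\Theta(G)$ to make sense in $\mathbb{C}G$, whereas Theorem~\ref{thm:1.1.2} is naturally an identity of polynomials in the $x_g$. The resolution is routine: the polynomial identity of Theorem~\ref{thm:1.1.2} specializes to any choice of complex values $x_{g}$, and once $\Theta(G)$ is a nonzero complex scalar the displayed computation goes through verbatim in $\mathbb{C}G$. I would state this specialization explicitly at the start of the proof; everything after that is a one-line manipulation, so there is no genuine obstacle beyond bookkeeping the scalar $\Theta(G)$ and invoking commutativity of $\mathbb{C}G$.
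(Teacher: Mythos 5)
Your argument is correct and is exactly the derivation the paper intends: the corollary is stated immediately after Theorem~\ref{thm:3.2.1} with no separate proof, being understood to follow by splitting off the trivial-character factor $\sum_{g} \chi_{1}(g) x_{g} g = \sum_{g} x_{g} g$ from the product and dividing by the nonzero scalar $\Theta(G)$ in the commutative ring $\mathbb{C}G$. Your extra care about specializing the indeterminates $x_{g}$ to complex values so that ``$\Theta(G) \neq 0$'' makes sense is a reasonable clarification but does not change the substance.
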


\subsection{Main results for dihedral groups and generalized quaternion groups}
We give an analog of Theorem~$\ref{thm:1.1.1}$ on the group algebra for dihedral groups and generalized quaternion groups. 
Furthermore, 
we describe the group determinant of these groups as a circulant determinant of homogeneous polynomials. 
Since the expression for the general case is troublesome, 
we illustrate the result on a dihedral group $D_{3} = \{ e, a, a^{2}, b, ab, a^{2} b \}$. 
Let $\langle a \rangle$ be the subgroup generated by $\{ a \}$ and $A_{h}$ be a homogeneous polynomial defined by 
$$
A_{h} = \sum_{g \in \langle a \rangle} \chi_{2}(g) x_{g} x_{h g}
$$
for $h \in G$. 
We describe $\Theta(D_{3})$ as a circulant determinant of $A_{h}$. 

\begin{thm}\label{thm:1.1.4}
Let $G$ be $D_{3}$. 
Then, we have 
$$
\Theta(D_{3}) = \prod_{\chi \in \widehat{\langle a \rangle}} \sum_{g \in \langle a \rangle} \chi(g) A_{g}. 
$$
\end{thm}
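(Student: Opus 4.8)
\medskip
\noindent Write $N=\langle a\rangle$, so that $D_3=N\sqcup Nb$ with $N$ normal of index two and $b^2=e$. The plan is to identify $\Theta(D_3)$ with $\det L_\xi$, where $L_\xi$ is left multiplication by the generic element $\xi=\sum_{g\in G}x_g\,g$ on $\mathbb{C}D_3$ (this is just the defining group matrix $(x_{gh^{-1}})$ in disguise), and to block it out over the subalgebra $\mathbb{C}N$ using $\mathbb{C}D_3=\mathbb{C}N\oplus(\mathbb{C}N)b$. Writing $\xi=\alpha+\beta b$ with $\alpha=\sum_{k\in N}x_k\,k$ and $\beta=\sum_{k\in N}x_{kb}\,k$ in $\mathbb{C}N$, the fact that conjugation by $b$ is the inversion automorphism $k\mapsto k^{-1}$ of $N$ together with $b^2=e$ gives the block form
\[
L_\xi=\begin{pmatrix} M_\alpha & M_\beta R \\ M_\beta R & M_\alpha \end{pmatrix},
\]
where $M_\gamma$ is multiplication by $\gamma\in\mathbb{C}N$ (a circulant) and $R$ is the permutation matrix of $k\mapsto k^{-1}$ on $N$. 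Since $\det\begin{pmatrix}P&Q\\Q&P\end{pmatrix}=\det(P+Q)\det(P-Q)$ (add block row two to block row one, then subtract block column one from block column two), this gives $\Theta(D_3)=\det(M_\alpha+M_\beta R)\,\det(M_\alpha-M_\beta R)$.

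Next I would diagonalise the circulants over $N$ by conjugating with the Fourier (character) matrix $F$ of $N$: then $F^{-1}M_\alpha F=\operatorname{diag}(\lambda_\chi)_{\chi\in\widehat N}$ with $\lambda_\chi=\sum_{k\in N}\chi(k)x_k$, and $F^{-1}M_\beta F=\operatorname{diag}(\mu_\chi)_{\chi\in\widehat N}$ with $\mu_\chi=\sum_{k\in N}\chi(k)x_{kb}$. The crucial point is that $F$ conjugates the inversion permutation $R$ into the permutation of $\widehat N$ sending $\chi\mapsto\overline\chi$. Consequently $F^{-1}(M_\alpha\pm M_\beta R)F$ is block diagonal, with a $1\times1$ block $\lambda_{\chi_1}\pm\mu_{\chi_1}$ coming from the self-conjugate trivial character and a $2\times2$ block $\begin{pmatrix}\lambda_\chi&\pm\mu_\chi\\ \pm\mu_{\overline\chi}&\lambda_{\overline\chi}\end{pmatrix}$ for each pair $\{\chi,\overline\chi\}$ of distinct conjugate characters. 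Taking determinants,
\[
\Theta(D_3)=\bigl(\lambda_{\chi_1}^2-\mu_{\chi_1}^2\bigr)\prod_{\{\chi,\overline\chi\}}\bigl(\lambda_\chi\lambda_{\overline\chi}-\mu_\chi\mu_{\overline\chi}\bigr)^2 .
\]

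It then remains to recognise each local factor as $\sum_{g\in N}\chi(g)A_g$. Expanding the products $\lambda_\chi\lambda_{\overline\chi}$ and $\mu_\chi\mu_{\overline\chi}$ and reindexing the resulting double sums (with $\lambda_\chi\lambda_{\overline\chi}$ accounting for the $N$-variables and $-\mu_\chi\mu_{\overline\chi}$ for the $Nb$-variables), one checks the polynomial identity $\lambda_\chi\lambda_{\overline\chi}-\mu_\chi\mu_{\overline\chi}=\sum_{g\in N}\chi(g)A_g$ for every $\chi\in\widehat N$; this expression is invariant under $\chi\mapsto\overline\chi$, which is exactly why a conjugate pair contributes its factor squared while $\chi_1$ contributes it once. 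So the displayed factorisation is precisely $\prod_{\chi\in\widehat N}\sum_{g\in N}\chi(g)A_g$, and applying Frobenius' theorem (Theorem~\ref{thm:1.1.1}) to the abelian group $N$ further identifies this with the $|N|\times|N|$ circulant determinant of $(A_e,A_a,A_{a^2})$, as asserted.

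The step I expect to be the main obstacle is the bookkeeping around the inversion permutation: one must pin down precisely how $F$ conjugates $R$ — equivalently, how $b$ acts in the Fourier basis of $\mathbb{C}N$ — and then make the identity $\lambda_\chi\lambda_{\overline\chi}-\mu_\chi\mu_{\overline\chi}=\sum_{g\in N}\chi(g)A_g$ come out on the nose, with all signs and indices matching the definition of $A_g$; after that one must keep careful track of which characters are self-conjugate (for general $D_n$ this depends on the parity of $n$) and of the resulting multiplicities, so that the product over a transversal of the $\chi\mapsto\overline\chi$ action really reassembles the full product over $\widehat N$. The same scheme should handle the general dihedral group and, with the off-diagonal blocks adjusted to account for $b^2\neq e$, the generalized quaternion groups.
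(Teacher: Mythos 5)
Your argument is correct, and it reaches the factorization by a genuinely different route from the paper. The paper starts from Frobenius' theorem applied to $D_{3}$ itself and the explicit list of irreducible representations: it pairs the two degree-one factors to produce $\sum_{h\in\langle a\rangle}A_{h}$ (Lemma~\ref{lem:5.1.2}), computes $\det\bigl(\sum_{g}\varphi_{l}(g)x_{g}\bigr)=\sum_{h}\chi_{l}'(h)A_{h}$ directly from the $2\times 2$ matrices (Lemma~\ref{lem:5.1.3}), and then uses $A_{h}=A_{h^{-1}}$ (Lemma~\ref{lem:5.1.4}) to redistribute the squares over conjugate pairs of characters of $\langle a\rangle$. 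You instead work with the regular representation: the block decomposition of $L_{\xi}$ over $\mathbb{C}N\oplus(\mathbb{C}N)b$, the identity $\det\begin{pmatrix}P&Q\\Q&P\end{pmatrix}=\det(P+Q)\det(P-Q)$, and Fourier diagonalization of the circulant blocks, with the inversion permutation $R$ becoming $\chi\mapsto\overline{\chi}$. I checked the pieces you flagged as delicate: the block form of $L_{\xi}$ is as you state, $(M_{\alpha}\pm M_{\beta}R)e_{\chi}=\lambda_{\chi}e_{\chi}\pm\mu_{\overline{\chi}}e_{\overline{\chi}}$ gives exactly your $2\times2$ blocks, and the identity $\lambda_{\chi}\lambda_{\overline{\chi}}-\mu_{\chi}\mu_{\overline{\chi}}=\sum_{g\in N}\chi(g)A_{g}$ does come out on the nose (substitute $m=hg$ in both double sums, using $b^{-1}=b$ for $D_{3}$); the trivial character is the only self-conjugate one for $|N|=3$, so the multiplicities reassemble the full product over $\widehat{N}$. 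What your approach buys is self-containedness: you never invoke Frobenius' theorem for the non-abelian group or its character table, and your $2\times2$ blocks are in effect the induced representations $\mathrm{Ind}_{N}^{G}\chi$ discovered from scratch — indeed they are the same matrices whose determinants the paper computes in Lemma~\ref{lem:5.1.3}, so the two proofs coincide at the level of the final computation. What the paper's route buys is immediate uniformity across $D_{m}$ and $Q_{m}$ via the sign $\delta(G)$ and the observation $\delta(G)\chi_{l}'(b^{-2})=1$, whereas your block decomposition needs the off-diagonal block modified to account for $b^{2}=a^{m}$ in $Q_{m}$, as you note.
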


Furthermore, we obtain the analog of Theorem~$\ref{thm:1.1.1}$ on the group algebra for $D_{3}$. 

\begin{thm}\label{thm:1.1.5}
Let $G$ be $D_{3}$, 
$\omega$ be a primitive third roots of unity and $\chi_{2}$ be the non-trivial degree one representation. 
Then, we have 
\begin{align*}
\Theta(D_{3}) e 
&= \alpha_{1} 
\alpha_{2} 
\left( A_{e} e + \omega A_{a} a + \omega^{2} A_{a^{2}} a^{2} \right) 
\left( A_{e} e + \omega^{2} A_{a} a + \omega A_{a^{2}} a^{2} \right) \\
&= \prod_{\chi \in \widehat{\langle a \rangle}} \sum_{g \in \langle a \rangle} \chi(g) A_{g} g, 
\end{align*}
where 
\begin{align*}
\alpha_{1} = \sum_{g \in D_{3}} x_{g} g , \quad 
\alpha_{2} = \sum_{g \in D_{3}} \chi_{2}(g) x_{g^{-1}} g. 
\end{align*}
\end{thm}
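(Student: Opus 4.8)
The plan is to reduce both claimed equalities to the single algebraic identity in $\mathbb{C} D_{3}$
\[
\alpha_{1} \alpha_{2} = \sum_{g \in \langle a \rangle} A_{g} g = A_{e} e + A_{a} a + A_{a^{2}} a^{2},
\]
and then to obtain everything else from Theorem~$\ref{thm:1.1.2}$ applied to $\langle a \rangle$ together with Theorem~$\ref{thm:1.1.4}$. Granting this identity, the two parenthesised factors on the right of the first displayed line of the theorem are exactly $\sum_{g \in \langle a \rangle} \chi(g) A_{g} g$ for the two non-trivial characters of $\langle a \rangle \cong \mathbb{Z}/3\mathbb{Z}$ (the one with $a \mapsto \omega$ and the one with $a \mapsto \omega^{2}$), while $\alpha_{1}\alpha_{2}$ is the factor attached to the trivial character; hence their product is $\prod_{\chi \in \widehat{\langle a \rangle}} \sum_{g \in \langle a \rangle} \chi(g) A_{g} g$, which is the second equality in the statement.

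I would prove the identity by a direct expansion. Multiplying $\alpha_{1} = \sum_{h \in D_{3}} x_{h} h$ by $\alpha_{2} = \sum_{g \in D_{3}} \chi_{2}(g) x_{g^{-1}} g$ and collecting, for a fixed $k \in D_{3}$, the terms with $hg = k$ (so $g = h^{-1}k$), the coefficient of $k$ in $\alpha_{1}\alpha_{2}$ is $\sum_{h \in D_{3}} \chi_{2}(h^{-1}k)\, x_{h}\, x_{k^{-1}h}$. Because $\chi_{2}$ is a degree one representation of order two, $\chi_{2}(h^{-1}k) = \chi_{2}(h)\chi_{2}(k)$, so this coefficient equals $\chi_{2}(k)\sum_{h \in D_{3}}\chi_{2}(h)x_{h}x_{k^{-1}h} = \chi_{2}(k) A_{k^{-1}}$ by the definition of $A_{k^{-1}}$. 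The substitution $g \mapsto hg$ in that definition also gives the symmetry $A_{h^{-1}} = \chi_{2}(h) A_{h}$, so the coefficient of $k$ is $\chi_{2}(k)^{2} A_{k} = A_{k}$ and $\alpha_{1}\alpha_{2} = \sum_{k \in D_{3}} A_{k}\, k$. Finally, each reflection $k$ satisfies $k^{-1} = k$ and $\chi_{2}(k) = -1$, so the same symmetry forces $A_{k} = A_{k^{-1}} = -A_{k}$, that is $A_{k} = 0$; only the rotations contribute and $\alpha_{1}\alpha_{2} = \sum_{g \in \langle a \rangle} A_{g} g$.

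It remains to show $\prod_{\chi \in \widehat{\langle a \rangle}} \sum_{g \in \langle a \rangle} \chi(g) A_{g} g = \Theta(D_{3}) e$. For this I would invoke Theorem~$\ref{thm:1.1.2}$ for the finite abelian group $\langle a \rangle$, but with the independent variables $x_{g}$ replaced throughout by the polynomials $A_{g}$; this is legitimate since Theorem~$\ref{thm:1.1.2}$ is an identity of polynomials with coefficients in $\mathbb{C}\langle a \rangle$ and the $A_{g}$ commute with one another and with the elements of $\langle a \rangle$. It yields
\[
\prod_{\chi \in \widehat{\langle a \rangle}} \sum_{g \in \langle a \rangle} \chi(g) A_{g} g = \Bigl( \prod_{\chi \in \widehat{\langle a \rangle}} \sum_{g \in \langle a \rangle} \chi(g) A_{g} \Bigr) e ,
\]
and the scalar in parentheses is the circulant determinant of $A_{e}, A_{a}, A_{a^{2}}$, which equals $\Theta(D_{3})$ by Theorem~$\ref{thm:1.1.4}$. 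Together with the previous paragraphs this identifies all three expressions in the statement.

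The main obstacle is the expansion step: one must verify that the reflection part of $\alpha_{1}\alpha_{2}$ cancels and that the surviving rotation coefficients are precisely the $A_{g}$. What makes this work is the interaction between the inversion $g \mapsto g^{-1}$ built into $\alpha_{2}$ and the fact that $\chi_{2}$ has order two; for $D_{n}$ and for the generalized quaternion groups the bookkeeping of which coefficients vanish is more delicate (the ``reflection'' elements need no longer be involutions), but for $D_{3}$ the computation is short and essentially forced.
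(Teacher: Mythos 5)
Your proof is correct and takes essentially the paper's route: first establish $\alpha_{1}\alpha_{2}=\sum_{h\in\langle a\rangle}A_{h}\,h$ (the paper's Lemma~\ref{lem:5.2.1} combined with the vanishing of $A_{h}$ off $\langle a\rangle$ from Lemma~\ref{lem:5.1.1}), then apply the abelian-group factorization to $\langle a\rangle$ with the $A_{g}$ substituted for the variables and identify the resulting scalar with $\Theta(D_{3})$ through the circulant formula (the paper's Theorem~\ref{thm:5.2.3} via Theorem~\ref{thm:5.1.5}). Your only real variation is in proving the coefficient identity, where the symmetry $A_{h^{-1}}=\chi_{2}(h)A_{h}$ yields both the coefficient $A_{k}$ and the vanishing on reflections in one stroke --- a tidier version of the paper's change-of-variables computation --- and it correctly uses the reading $A_{h}=\sum_{g\in G}\chi_{2}(g)x_{g}x_{hg}$ (the introduction's ``$g\in\langle a\rangle$'' in that definition is evidently a typo, as the body's definition in Section~5 confirms).
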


Theorem~$\ref{thm:1.1.1}$ for $D_{3}$ and Theorem $\ref{thm:1.1.4}$ follow by applying $F$ to Theorem~$\ref{thm:1.1.5}$. 
Moreover, we obtain the following formula for inverse elements in the group algebra for $D_{3}$. 

\begin{cor}\label{cor:1.1.6}
Let $G$ be $D_{3}$. 
If $\Theta(G) \neq 0$, 
then we have 
$$
\alpha_{1}^{-1} 
= 
\frac{1}{\Theta(D_{3})} \alpha_{2} 
\left( A_{e} e + \omega A_{a} a + \omega^{2} A_{a^{2}} a^{2} \right) 
\left( A_{e} e + \omega^{2} A_{a} a + \omega A_{a^{2}} a^{2} \right). 
$$
\end{cor}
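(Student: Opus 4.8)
The plan is to derive this directly from Theorem~\ref{thm:1.1.5} by a short algebraic manipulation, treating the identity as taking place in $\mathbb{C}D_{3}$. First I would invoke Theorem~\ref{thm:1.1.5} in the form
\[
\Theta(D_{3})\, e = \alpha_{1}\alpha_{2}\left( A_{e} e + \omega A_{a} a + \omega^{2} A_{a^{2}} a^{2} \right)\left( A_{e} e + \omega^{2} A_{a} a + \omega A_{a^{2}} a^{2} \right).
\]
Since $e$ is the unit of the group algebra, the left side is $\Theta(D_{3})$ times the identity of $\mathbb{C}D_{3}$, so under the hypothesis $\Theta(D_{3}) \neq 0$ we may divide by the scalar $\Theta(D_{3})$ to obtain
\[
e = \alpha_{1} \cdot \frac{1}{\Theta(D_{3})}\,\alpha_{2}\left( A_{e} e + \omega A_{a} a + \omega^{2} A_{a^{2}} a^{2} \right)\left( A_{e} e + \omega^{2} A_{a} a + \omega A_{a^{2}} a^{2} \right).
\]

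Reading this equation, I would point out that it exhibits a right inverse of $\alpha_{1}$ in $\mathbb{C}D_{3}$, namely the element
\[
\beta := \frac{1}{\Theta(D_{3})}\,\alpha_{2}\left( A_{e} e + \omega A_{a} a + \omega^{2} A_{a^{2}} a^{2} \right)\left( A_{e} e + \omega^{2} A_{a} a + \omega A_{a^{2}} a^{2} \right),
\]
so that $\alpha_{1}\beta = e$. To conclude $\alpha_{1}^{-1} = \beta$, it remains to check that $\beta$ is also a left inverse, i.e. $\beta\alpha_{1} = e$. The cleanest way is to note that $\mathbb{C}D_{3}$ is a finite-dimensional associative $\mathbb{C}$-algebra with unit, and in such an algebra a one-sided inverse is automatically two-sided: if $\alpha_{1}\beta = e$ then left multiplication by $\alpha_{1}$ is surjective on the finite-dimensional space $\mathbb{C}D_{3}$, hence injective, hence bijective, and its inverse map sends $e$ to $\beta$, forcing $\beta\alpha_{1}=e$ as well. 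Thus $\alpha_{1}$ is a unit with $\alpha_{1}^{-1}=\beta$, which is exactly the claimed formula.

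The only point requiring care — and the one I would single out as the crux — is the passage from a right inverse to a genuine two-sided inverse; everything else is formal. If one prefers to avoid the dimension argument, an alternative is to observe that the factors $\alpha_{2}$, $\left( A_{e} e + \omega A_{a} a + \omega^{2} A_{a^{2}} a^{2} \right)$ and $\left( A_{e} e + \omega^{2} A_{a} a + \omega A_{a^{2}} a^{2} \right)$ all lie in the commutative subalgebra $\mathbb{C}\langle a\rangle$ (the last two visibly so, and $\alpha_{2}$ should be checked against the defining relations of $D_{3}$), so the product defining $\Theta(D_{3})e$ can be rearranged to place $\alpha_{1}$ on either side, giving $\beta\alpha_{1} = e$ directly from the same Theorem~\ref{thm:1.1.5}. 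Either route completes the proof.
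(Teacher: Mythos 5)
Your main argument is correct and is essentially the derivation the paper intends: the corollary is stated as an immediate consequence of Theorem~\ref{thm:1.1.5}, obtained by dividing the identity $\Theta(D_{3})e=\alpha_{1}\alpha_{2}PQ$ by the nonzero scalar $\Theta(D_{3})$, and you rightly identify the only non-formal step, namely promoting the resulting right inverse of $\alpha_{1}$ to a two-sided inverse via finite-dimensionality of $\mathbb{C}D_{3}$. One caution about your proposed alternative route: $\alpha_{2}=\sum_{g\in D_{3}}\chi_{2}(g)x_{g^{-1}}g$ does \emph{not} lie in $\mathbb{C}\langle a\rangle$ (it has components on $b$, $ab$, $a^{2}b$), and the individual factor $A_{e}e+\omega A_{a}a+\omega^{2}A_{a^{2}}a^{2}$ does not commute with $\alpha_{1}$ (conjugation by $b$ swaps it with its partner); what actually saves that route is that the \emph{product} of the two degree-two factors is central, since $A_{h}=A_{h^{-1}}$, combined with $[\alpha_{1},\alpha_{2}]=0$ from Theorem~\ref{thm:1.1.7} --- consistent with the paper's warning that the order of the factors matters.
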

We can extend the results in Section $1.2$ into dihedral groups and generalized quaternion groups.

\subsection{Algebraic properties of irreducible factors corresponding to degree one representations}
Let $G$ be a dihedral group $D_{m}$ or generalized quaternion group $Q_{m}$. 
The commutators of irreducible factors of the factorization of the group determinant on the group algebra corresponding to degree one representations have 
the following algebraic properties. 

\begin{thm}\label{thm:1.1.7}
Let $G$ be $D_{m}$ or $Q_{m}$. 
Then, we have 
\begin{align*}
[ \alpha_{1}, \alpha_{2} ] = 0, \quad 
[ \alpha_{3}, \alpha_{4} ] = 0, 
\end{align*}
\begin{align*}
[ \alpha_{1}, \alpha_{3} + \alpha_{4} ] = 0, \quad 
[ \alpha_{2}, \alpha_{3} + \alpha_{4} ] = 0, \quad 
[ \alpha_{3}, \alpha_{1} + \alpha_{2} ] = 0, \quad 
[ \alpha_{4}, \alpha_{1} + \alpha_{2} ] = 0. 
\end{align*}
where $\alpha_{i} (1 \leq i \leq 4)$ is the irreducible factor of the factorization of the group determinant on the group algebra corresponding to the 
degree one representation. 
However, the equations for $\alpha_{3}$ and $\alpha_{4}$ do not hold 
when $G = D_{m}$ and $m$ is odd. 
\end{thm}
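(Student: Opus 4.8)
The plan is to work entirely inside the group algebra $\mathbb{C}G$ and pin down what the four factors $\alpha_1,\alpha_2,\alpha_3,\alpha_4$ actually are. Writing $G=D_m=\langle a,b\rangle$ (or $Q_m$), the degree one representations factor through $G/[G,G]$, which is $C_2\times C_2$ when $m$ is even and $C_2$ when $G=D_m$ with $m$ odd; this is already why the last sentence of the statement must be true, and it tells us that in the even case we have exactly four degree one characters, say $\chi_1$ (trivial), $\chi_2$, $\chi_3$, $\chi_4$, with $\chi_1\chi_2=\chi_3\chi_4$ the two characters trivial on $\langle a\rangle$ versus the two nontrivial on $\langle a\rangle$. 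Following the pattern of Theorems~\ref{thm:1.1.2} and~\ref{thm:1.1.5}, I expect $\alpha_i=\sum_{g\in G}\chi_i(g)x_{g^{-1}}g$ (with $\alpha_1=\sum_g x_g g$), i.e. each $\alpha_i$ is a ``twisted'' copy of $\alpha_1$. The first reduction is therefore to fix these explicit formulas for $\alpha_i$ from the construction underlying Theorem~\ref{thm:1.1.5} and its stated extension.

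Next I would compute the commutators directly. Each $\alpha_i$ splits as $\alpha_i = p_i + q_i$ where $p_i=\sum_{g\in\langle a\rangle}\chi_i(g)x_{g^{-1}}g$ lives in the commutative subalgebra $\mathbb{C}\langle a\rangle$ and $q_i=\sum_{g\in\langle a\rangle}\chi_i(gb)x_{(gb)^{-1}}gb$ is the part supported on the coset $\langle a\rangle b$. Since $p_i,p_j$ commute automatically, the commutator $[\alpha_i,\alpha_j]$ collapses to $[p_i,q_j]+[q_i,p_j]+[q_i,q_j]$, and each of these three pieces can be expanded using only the two relations $ba=a^{-1}b$ and $b^2\in\langle a\rangle$ ($=e$ for $D_m$, $=a^{m/2}$ for $Q_m$) together with the multiplicativity $\chi_i(a)\chi_j(a)$ etc. The key algebraic fact to exploit is that for any $g\in\langle a\rangle$ the conjugation $b^{-1}gb=g^{-1}$ flips the sign of the exponent, so terms in $[p_i,q_j]$ pair up with terms in $[q_i,p_j]$; whether they cancel depends precisely on whether $\chi_i(a)=\chi_j(a)$ or $\chi_i(a)=\chi_j(a)^{-1}$, which is exactly the dichotomy encoded by the six listed identities: $\{\alpha_1,\alpha_2\}$ and $\{\alpha_3,\alpha_4\}$ are the ``same restriction to $\langle a\rangle$'' pairs, while the cross terms $\alpha_i$ vs $\alpha_3+\alpha_4$ work because $\chi_3+\chi_4$ restricted to each coset is symmetric under $g\mapsto g^{-1}$.

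Concretely, the steps in order are: (1) record the explicit $\alpha_i$; (2) reduce $[\alpha_i,\alpha_j]$ to the three coset-mixed brackets above; (3) show $[p_i,q_i]=[q_i,p_i]$-type cancellation gives $[\alpha_1,\alpha_2]=0$ and $[\alpha_3,\alpha_4]=0$ by a direct reindexing $g\mapsto g^{-1}$ on $\langle a\rangle$, using $\chi_1\chi_2$ (resp. $\chi_3\chi_4$) trivial on $\langle a\rangle$; (4) for the four mixed identities, expand $[\alpha_i,\alpha_3+\alpha_4]$ and use $\chi_3(g)+\chi_4(g)=\chi_3(g)+\chi_3(g)^{-1}$ on $\langle a\rangle$ to get a sum invariant under $g\mapsto g^{-1}$, forcing cancellation against the $q$-parts; (5) finally, in the case $G=D_m$ with $m$ odd, observe that $[G,G]=\langle a\rangle$ has odd order, so $a$ is a commutator and $\chi_3,\chi_4$ cannot both be degree one characters distinct from $\chi_1,\chi_2$ — indeed there are only two degree one characters — so $\alpha_3,\alpha_4$ are not defined in the intended way and the $\alpha_3,\alpha_4$ identities fail; I would exhibit a $Q_m$-style or $D_m$-even-style counterexample computation (a single explicit coefficient) to confirm non-vanishing when one tries to force the definition.

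The main obstacle I anticipate is bookkeeping in step (4): the mixed bracket $[q_i,q_j]$ lands back in $\mathbb{C}\langle a\rangle$ (since $b\cdot b\in\langle a\rangle$), and one must check that the $\langle a\rangle$-valued output is genuinely zero and not merely paired with something in the coset part; this requires carefully tracking how $\chi_i((gb)^{-1})=\chi_i(b^{-1}g^{-1})=\chi_i(b)\chi_i(g)^{-1}$ interacts with the relation $(gb)(hb)=g h^{-1} b^2$. Getting the signs right here, uniformly for $D_m$ and $Q_m$ (the only difference being the central element $b^2$), is the delicate part; everything else is a reindexing argument driven by the single involution $g\mapsto g^{-1}$ on the cyclic subgroup $\langle a\rangle$.
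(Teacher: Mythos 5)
Your overall plan (decomposing by cosets of $\langle a\rangle$, reindexing via $g\mapsto g^{-1}$, and noting that $D_m$ with $m$ odd has only two degree one characters, which is why the $\alpha_3,\alpha_4$ statements are vacuous there) points in the right direction, but there are concrete gaps. First, the factors are not what you guess: in the paper $\alpha_{2} = \sum_{g \in \langle a \rangle} \chi_{2}(g) x_{g^{-1}} g + \sum_{g \in G \setminus \langle a \rangle} \chi_{2}(g) x_{g} g$ (and similarly $\alpha_4$), i.e.\ the variable is inverted only on $\langle a\rangle$, while $\alpha_1,\alpha_3$ carry no inversion at all. For $D_m$ your formula $\sum_{g}\chi_i(g)x_{g^{-1}}g$ happens to agree because the elements of $G\setminus\langle a\rangle$ are involutions, but in $Q_m$ one has $(a^kb)^{-1}=a^{m+k}b$, so your $\alpha_2,\alpha_4$ are not the factors the theorem is about; and since $[\alpha_1,\alpha_3]$ and $[\alpha_1,\alpha_4]$ are individually \emph{nonzero} (the paper exhibits nonvanishing coefficients) with only their sum vanishing, these identities are genuinely sensitive to the normalization. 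Second, the dichotomy you propose as the driving mechanism --- whether $\chi_i(a)=\chi_j(a)$ or $\chi_i(a)=\chi_j(a)^{-1}$ --- cannot distinguish anything here: every degree one character of $D_m$ or $Q_m$ takes values $\pm1$ on $a$, so $\chi(a)=\chi(a)^{-1}$ for all of them. Steps (3) and (4) of your outline defer all of the actual computation to this mechanism, so as written the core of the proof is missing.

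The missing structural idea for the mixed identities is that the coset parts cancel in the sums: $1+\chi_2\equiv 0$ and $\chi_3+\chi_4\equiv 0$ on $G\setminus\langle a\rangle$, so $\alpha_1+\alpha_2=\sum_{g\in\langle a\rangle}(x_g+x_{g^{-1}})g$ and $\alpha_3+\alpha_4=\sum_{g\in\langle a\rangle}\chi_3(g)(x_g+x_{g^{-1}})g$ are supported on $\langle a\rangle$ with coefficients invariant under $g\mapsto g^{-1}$; since the conjugacy classes of $D_m$ and $Q_m$ meeting $\langle a\rangle$ are the sets $\{a^k,a^{-k}\}$, both sums are central in $\mathbb{C}G$. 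That one observation gives all four mixed identities at once, and even $[\alpha_1,\alpha_2]=[\alpha_1,\alpha_1+\alpha_2]=0$ and $[\alpha_3,\alpha_4]=[\alpha_3,\alpha_3+\alpha_4]=0$. This is shorter than the paper's actual route, which proves $[\alpha_1,\alpha_2]=0$ by computing $\alpha_2\alpha_1$ directly against the product formula $\alpha_1\alpha_2=\sum_{h\in\langle a\rangle}A_h h$, transfers it to $[\alpha_3,\alpha_4]=0$ by the substitution $x_g\mapsto\chi_3(g)x_g$, and establishes $[\alpha_1,\alpha_3+\alpha_4]=0$ by computing the coefficients of $a^k$ and $a^kb$ in $[\alpha_1,\alpha_3]$ and $[\alpha_1,\alpha_4]$ separately and checking they are negatives of each other. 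Your phrase ``forcing cancellation against the $q$-parts'' gestures at this but does not identify the cancellation $\chi_3+\chi_4\equiv0$ off $\langle a\rangle$, which is the step that actually makes the argument close.
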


We remark that when $m$ is odd the number of degree one representation of $D_{m}$ is two and 
when $m$ is even the number is four. 
The number of degree one representation of $Q_{m}$ is always four. 
From Theorem $\ref{thm:1.1.7}$, we know that the degree one representations form natural pairing $(\chi_{1}, \chi_{2})$ and $(\chi_{3}, \chi_{4})$.

\subsection{Future works}
From the above results and Theorem $\ref{thm:1.1.1}$, 
we expect to find determinant formulas for matrices with elements on the group algebra.

\section{\bf{Factorization of group determinant}}
In this section, we recall the group determinant and the factorization of the group determinant.

\subsection{Factorization of group determinant}
Let $G$ be a finite group of order $n$ and $\{ x_{g} \: \vert \: g \in G \}$ be independent commuting variables. 
The group determinant $\Theta(G)$ is the determinant of the $n \times n$ matrix $\left( x_{g, h} \right)$ where $x_{g, h} = x_{g h^{-1}}$ for $g, h \in G$ 
and is thus a homogeneous polynomial of degree $n$ in $x_{g}$. 
Frobenius gave the following theorem about the factorization of the group determinant. 
\begin{thm}[Frobenius]\label{thm:2.1.1}
Let $\widehat{G}$ be a complete set of irreducible representations of $G$. 
Then, we have 
$$
\Theta(G) = \prod_{\varphi \in \widehat{G}} \det{\left( \sum_{g \in G} \varphi (g) x_{g} \right)^{\deg{\varphi}}}. 
$$
\end{thm}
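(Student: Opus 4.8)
The plan is to realize the group matrix as the matrix of a single operator on the group algebra and then put that operator in block-diagonal form via the Wedderburn decomposition. Concretely, work over the field $K = \mathbb{C}(x_g : g \in G)$ of rational functions, set $u = \sum_{g \in G} x_g\, g \in K G$, and let $L_u$ denote left multiplication by $u$ on $K G$ expressed in the basis $\{ h : h \in G \}$. Since $u \cdot h = \sum_{g \in G} x_g\, g h = \sum_{k \in G} x_{k h^{-1}}\, k$, the $(k,h)$ entry of the matrix of $L_u$ is $x_{k h^{-1}}$, so this matrix is exactly the group matrix and $\Theta(G) = \det L_u$. Thus the theorem reduces to evaluating the determinant of left multiplication by $u$ on the regular representation.

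Next I would invoke Maschke's theorem together with the fact that $\mathbb{C}$ is a splitting field for $G$, so that there is a fixed invertible complex matrix $P$ (independent of the variables $x_g$) for which $P^{-1} L_u P$ is block diagonal, the block attached to $\varphi \in \widehat{G}$ being $\varphi(u)^{\oplus \deg \varphi}$, where $\varphi(u) = \sum_{g \in G} \varphi(g) x_g$; equivalently this is the statement that the regular representation decomposes as $\bigoplus_{\varphi \in \widehat{G}} \varphi^{\oplus \deg \varphi}$. Because $P$ is a constant matrix, this conjugation is an identity of matrices with entries in $\mathbb{C}[x_g : g \in G]$, so taking determinants and using multiplicativity over block-diagonal matrices gives
$$
\Theta(G) = \det L_u = \prod_{\varphi \in \widehat{G}} \det\bigl( \varphi(u) \bigr)^{\deg \varphi} = \prod_{\varphi \in \widehat{G}} \det\Bigl( \sum_{g \in G} \varphi(g) x_g \Bigr)^{\deg \varphi},
$$
which is the asserted factorization.

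The only genuinely delicate point is the passage to block-diagonal form with variable entries: one must check that the intertwiner $P$ coming from the isotypic decomposition of the regular representation can be chosen independently of the $x_g$, so that the similarity is literally an equality in the polynomial ring and not merely a numerical coincidence under each specialization $x_g \mapsto c_g$. This follows because $P$ is built from a fixed algebra isomorphism $\mathbb{C} G \cong \bigoplus_{\varphi \in \widehat{G}} M_{\deg \varphi}(\mathbb{C})$, under which left multiplication by an arbitrary element corresponds entrywise to left multiplication by the tuple $\bigl( \varphi(u) \bigr)_{\varphi}$, and left multiplication by a matrix on $M_d(\mathbb{C})$ is conjugate to $d$ copies of that matrix — both facts being independent of $u$. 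Everything else (multiplicativity of the determinant over direct sums and the identification $\varphi(u) = \sum_{g} \varphi(g) x_g$) is immediate, and I expect no further obstacle.
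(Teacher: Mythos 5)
Your proof is correct: identifying the group matrix with the matrix of left multiplication by $u=\sum_{g}x_{g}\,g$ on the regular representation and then block-diagonalizing via a variable-independent change of basis coming from $\mathbb{C}G\cong\bigoplus_{\varphi}M_{\deg\varphi}(\mathbb{C})$ is the standard argument, and you correctly flag and resolve the one delicate point (that the intertwiner does not depend on the $x_{g}$, so the factorization holds identically in the polynomial ring). The paper states this classical theorem of Frobenius without proof, so there is no in-paper argument to compare against; your write-up supplies a complete and valid proof of the cited result.
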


\section{\bf{Results for finite abelian groups}}
In this section, we give an analog of Frobenius' theorem for finite abelian groups about factorization of the group determinant on the group algebra.

\subsection{Preparation for main results of finite abelian groups}
We prepare to explain for main results of finite abelian groups. 
Let $\{ \varphi^{(1)}, \varphi^{(2)}, \ldots, \varphi^{(s)} \}$ be a complete set of irreducible representations of G, 
$M_{k}(\mathbb{C})$ be a set of $k \times k$ matrices with entries in $\mathbb{C}$ 
and $L(G)$ be the linear space of all complex functions on $G$. 
The set $L(G)$ is a ring with addition taken pointwise and convolution as multiplication \cite[Theorem 5.2.3]{benjamin}. 
\begin{definition}[Fourier transform]\label{def:3.1.1}
Define 
$$
T: L(G) \rightarrow M_{d_{1}}(\mathbb{C}) \times M_{d_{2}}(\mathbb{C}) \times \cdots \times M_{d_{s}}(\mathbb{C})
$$
by 
$$
Tf = (\widehat{f}(\varphi^{(1)}), \widehat{f}(\varphi^{(2)}), \ldots , \widehat{f}(\varphi^{(s)})). 
$$
where
$$
\widehat{f}(\varphi^{(k)}) = \sum_{g \in G} \overline{\varphi^{(k)} (g)} f(g), 
$$
the bar denotes complex conjugation. 
We call $Tf$ the Fourier transform of $f$. 
\end{definition}
\begin{thm}[\cite{benjamin}, Theorem 5.5.6]\label{thm:3.1.2}
The Fourier transform 
$$
T: L(G) \rightarrow M_{d_{1}}(\mathbb{C}) \times M_{d_{2}}(\mathbb{C}) \times \cdots \times M_{d_{s}}(\mathbb{C}) 
$$
is an isomorphism of rings. 
\end{thm}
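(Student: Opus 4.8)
The plan is to verify three things: that $T$ is a ring homomorphism, that it is injective, and that it is surjective. The additivity $T(f+f') = Tf + Tf'$ is immediate from the definition of $\widehat{f}(\varphi^{(k)})$, since each coordinate $\widehat{(\cdot)}(\varphi^{(k)})$ is $\mathbb{C}$-linear in $f$. The only substantive algebraic point is that $T$ carries convolution to coordinatewise matrix multiplication, i.e. $\widehat{f * f'}(\varphi^{(k)}) = \widehat{f}(\varphi^{(k)}) \, \widehat{f'}(\varphi^{(k)})$ for each $k$. I would prove this by writing out $\widehat{f*f'}(\varphi^{(k)}) = \sum_{g} \overline{\varphi^{(k)}(g)} \sum_{h} f(h) f'(h^{-1} g)$, substituting $g = h t$, and using the homomorphism property $\overline{\varphi^{(k)}(ht)} = \overline{\varphi^{(k)}(h)}\,\overline{\varphi^{(k)}(t)}$ to factor the double sum as $\bigl(\sum_h \overline{\varphi^{(k)}(h)} f(h)\bigr)\bigl(\sum_t \overline{\varphi^{(k)}(t)} f'(t)\bigr)$. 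One should also note that $T$ sends the convolution identity (the delta function at $e$) to the tuple of identity matrices, so $T$ is unital.

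For bijectivity, the cleanest route is a dimension count combined with injectivity, rather than exhibiting an explicit inverse. Both sides are finite-dimensional complex vector spaces: $\dim_{\mathbb{C}} L(G) = |G|$, while $\dim_{\mathbb{C}} \bigl(M_{d_1}(\mathbb{C}) \times \cdots \times M_{d_s}(\mathbb{C})\bigr) = \sum_{k=1}^{s} d_k^2$, and these agree by the classical identity $\sum_k d_k^2 = |G|$ for a complete set of irreducible representations. Hence it suffices to prove $T$ is injective. Injectivity amounts to showing that if all Fourier coefficients $\widehat{f}(\varphi^{(k)})$ vanish then $f \equiv 0$; this follows from the completeness of $\widehat{G}$ together with the orthogonality (Schur) relations for matrix coefficients of irreducible representations, which give a Fourier inversion formula $f(g) = \tfrac{1}{|G|}\sum_{k} d_k \operatorname{tr}\bigl(\varphi^{(k)}(g)\,\widehat{f}(\varphi^{(k)})\bigr)$ recovering $f$ from $Tf$. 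Alternatively, since $L(G)$ is semisimple, injectivity of a ring homomorphism can be read off once one knows $T$ is nonzero on each simple component.

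I expect the orthogonality-relations step to be the main obstacle, in the sense that it carries the real representation-theoretic content; everything else is bookkeeping. However, since this theorem is quoted from \cite[Theorem 5.5.6]{benjamin}, the intended proof is presumably just a citation, and the role of this section is only to set up notation for the abelian case. In the abelian setting all $d_k = 1$, the matrices degenerate to scalars, and the statement reduces to the assertion that the characters $\{\chi : \chi \in \widehat{G}\}$ form a basis of $L(G)$ that diagonalizes convolution — a fact I would either cite or derive in a line from the first orthogonality relation $\sum_{g} \chi(g)\overline{\chi'(g)} = |G|\,\delta_{\chi,\chi'}$.
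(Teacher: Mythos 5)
The paper gives no proof of this statement---it is imported verbatim from \cite[Theorem 5.5.6]{benjamin}, exactly as you anticipate in your closing paragraph---so there is no in-paper argument to compare against. Your sketch is the standard proof from the cited reference (multiplicativity of $T$ by reindexing the convolution sum via $g = ht$ and using that entrywise conjugation respects matrix products, unitality via the delta function at $e$, injectivity from Fourier inversion via the Schur orthogonality relations, and surjectivity from injectivity plus the dimension count $\sum_{k} d_k^2 = |G|$), and it is correct as an outline, with the orthogonality relations appropriately deferred as standard background.
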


Let 
$$
\mathbb{C} G = \left\{ \sum_{g \in G} x_{g} g \: \vert \: x_{g} \in \mathbb{C}, g \in G \right\}
$$
be the group algebra of $G$. 

\begin{lem}\label{lem:3.1.3}
Let $G$ be a finite group. 
Then, we have 
$$
L(G) \cong \mathbb{C} G. 
$$
as an isomorphism of rigns. 
\end{lem}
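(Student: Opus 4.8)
Proof proposal for Lemma 3.1.3 ($L(G) \cong \mathbb{C}G$ as rings):

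The plan is to exhibit an explicit map and check it is a ring isomorphism. The natural candidate is the linear map $\Phi : L(G) \to \mathbb{C}G$ sending a function $f \in L(G)$ to the formal sum $\Phi(f) = \sum_{g \in G} f(g)\, g$. First I would observe that $\Phi$ is a bijection of vector spaces: the indicator functions $\{\delta_g : g \in G\}$ form a basis of $L(G)$, and $\Phi$ sends $\delta_g$ to the basis element $g$ of $\mathbb{C}G$, so $\Phi$ is a linear isomorphism essentially by definition. The content of the lemma is therefore entirely in checking that $\Phi$ respects multiplication, where $L(G)$ carries the convolution product and $\mathbb{C}G$ carries the product induced by the group operation.

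The key computation is as follows. For $f_1, f_2 \in L(G)$, the convolution is $(f_1 * f_2)(g) = \sum_{h \in G} f_1(h) f_2(h^{-1} g)$ (I would double-check the exact convention used in the cited reference \cite[Theorem 5.2.3]{benjamin} and match it), so
\begin{align*}
\Phi(f_1 * f_2) &= \sum_{g \in G} \left( \sum_{h \in G} f_1(h) f_2(h^{-1} g) \right) g.
\end{align*}
On the other hand,
\begin{align*}
\Phi(f_1)\Phi(f_2) = \left( \sum_{h \in G} f_1(h)\, h \right)\left( \sum_{k \in G} f_2(k)\, k \right) = \sum_{h, k \in G} f_1(h) f_2(k)\, hk.
\end{align*}
Reindexing the right-hand side by $g = hk$, i.e. $k = h^{-1} g$, gives exactly $\sum_{g} \left( \sum_h f_1(h) f_2(h^{-1}g) \right) g$, matching $\Phi(f_1 * f_2)$. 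Finally I would note that $\Phi$ sends the convolution identity $\delta_e$ to $e$, the identity of $\mathbb{C}G$, so $\Phi$ is a unital ring homomorphism, and being bijective it is an isomorphism.

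I do not anticipate a genuine obstacle here; the lemma is essentially a bookkeeping identity. The only point requiring care is the convolution convention: depending on whether \cite{benjamin} defines $(f_1 * f_2)(g)$ as $\sum_h f_1(h) f_2(h^{-1}g)$ or as $\sum_h f_1(gh^{-1}) f_2(h)$ (or a variant with arguments ordered the other way), the map $\Phi$ may need to be composed with the antipode $g \mapsto g^{-1}$, or one may need $\Phi(f) = \sum_g f(g^{-1}) g$ instead, to land in $\mathbb{C}G$ with its standard product rather than the opposite product. I would state the convention explicitly at the start of the proof so the reindexing step is unambiguous, and otherwise the argument is immediate.
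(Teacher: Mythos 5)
Your proof is correct, and it is the standard argument: the map $f \mapsto \sum_{g \in G} f(g)\,g$ carries the basis of indicator functions to the group-element basis and intertwines convolution with the group-algebra product. The paper itself states Lemma~3.1.3 without any proof, treating it as a well-known fact, so there is nothing to compare against; your write-up simply supplies the omitted verification, and your caution about matching the convolution convention of the cited reference is the only point of substance (with Steinberg's convention $(f_1 * f_2)(x) = \sum_{y} f_1(xy^{-1}) f_2(y)$ the same map $\Phi$ works without any antipode).
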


\subsection{Results for finite abelian groups}
We give the following theorem that is an analog of Theorem $\ref{thm:2.1.1}$ for finite abelian groups on the group algebra.

\begin{thm}\label{thm:3.2.1}
Let $G$ be a finite abelian group and $e$ be the unit element of $G$. 
Then, we have 
$$
\Theta(G) e = \prod_{\chi \in \widehat{G}} \sum_{g \in G} \chi(g) x_{g} g. 
$$
\end{thm}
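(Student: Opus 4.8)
The plan is to transport the claimed identity to the Fourier side, where it becomes an easy consequence of Frobenius' theorem. Since all irreducible representations of an abelian group are one-dimensional, Theorem~$\ref{thm:2.1.1}$ already gives $\Theta(G) = \prod_{\chi \in \widehat{G}} \sum_{g \in G} \chi(g) x_{g}$, so the real content of the assertion is the extra structure recorded by the group-algebra element on the right-hand side, and the whole problem is to show that this element equals the scalar $\Theta(G)$ times the unit $e$.

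First I would reduce to an identity in $\mathbb{C}G$ proper: both sides are polynomial expressions in the variables $x_{g}$ with coefficients in the finite-dimensional space $\mathbb{C}G$, so it suffices to verify the equality after specializing the $x_{g}$ to arbitrary complex numbers (alternatively, one extends scalars from $\mathbb{C}$ to $\mathbb{C}(x_{g})$, over which the analogue of Theorem~$\ref{thm:3.1.2}$ still holds). Next, using Lemma~$\ref{lem:3.1.3}$ I would pass from $\mathbb{C}G$ to $L(G)$: the element $\sum_{g} x_{g} g$ corresponds to the function $x \colon g \mapsto x_{g}$, the unit $e$ to the indicator function $\delta_{e}$, and $\sum_{g} \chi(g) x_{g} g$ to the function $g \mapsto \chi(g) x_{g}$.

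Then I would apply the Fourier transform $T$ of Definition~$\ref{def:3.1.1}$, which by Theorem~$\ref{thm:3.1.2}$ is a ring isomorphism and which, for abelian $G$, identifies $L(G)$ with $\mathbb{C}^{s}$ with components indexed by $\widehat{G}$. One computes that $\widehat{\delta_{e}}(\chi) = 1$ for every $\chi$, so $T(\Theta(G) e) = (\Theta(G), \dots, \Theta(G))$, while the $\chi$-component of the transform of $\sum_{g} \psi(g) x_{g} g$ is $\sum_{g} \overline{\chi(g)} \psi(g) x_{g} = \sum_{g} (\psi \chi^{-1})(g) x_{g}$. Multiplying these componentwise over $\psi \in \widehat{G}$ and using that $\psi \mapsto \psi \chi^{-1}$ is a bijection of $\widehat{G}$, the $\chi$-component of the transform of $\prod_{\psi} \sum_{g} \psi(g) x_{g} g$ becomes $\prod_{\rho \in \widehat{G}} \sum_{g} \rho(g) x_{g} = \Theta(G)$, independently of $\chi$. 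Hence both sides have the same Fourier transform, and injectivity of $T$ finishes the proof.

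The argument is short and there is no deep obstacle; the points requiring care are purely bookkeeping: keeping track of the complex-conjugation convention in the definition of $\widehat{f}$, checking that the reindexing $\psi \mapsto \psi \chi^{-1}$ really permutes the character group, and justifying the passage from complex specializations of the $x_{g}$ back to the formal polynomial identity.
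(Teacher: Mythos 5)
Your proposal is correct and follows essentially the same route as the paper: the paper also passes to $L(G)$ via Lemma~$\ref{lem:3.1.3}$, computes that the $\chi_{l}$-component of the Fourier transform of $\sum_{g}\chi_{k}(g)x_{g}g$ is $\sum_{g}(\chi_{k}\chi_{l}^{-1})(g)x_{g}$, reindexes the product over $\widehat{G}$, invokes Frobenius' theorem to identify each component with $\Theta(G)$, and concludes by the ring isomorphism of Theorem~$\ref{thm:3.1.2}$. The only difference is that you make explicit two points the paper leaves tacit (the bijectivity of $\psi\mapsto\psi\chi^{-1}$ and the passage between formal variables and complex specializations), which is a welcome but inessential refinement.
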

\begin{proof}
Let $G = \{g_{1}, g_{2}, \ldots, g_{n} \}$ and $\widehat{G} = \{ \chi_{1}, \chi_{2}, \ldots, \chi_{n} \}$. 
By definition, we have 
$$
\alpha = \sum_{i = 1}^{n} x_{g_{i}} g_{i} \in \mathbb{C}G. 
$$
Then, for $\chi_{k} \in \widehat{G}$, we define 
$$
\alpha_{\chi_{k}} = \sum_{i = 1}^{n} \chi_{k}(g_{i}) x_{g_{i}} g_{i} \in \mathbb{C} G, \quad 
\alpha_{\chi_{k}}^{*} = \sum_{i = 1}^{n} \chi_{k}(g_{i}) x_{g_{i}} \in \mathbb{C}. 
$$
We can regard $\alpha_{\chi_{k}}$ as an element in $L(G)$ by Lemma $\ref{lem:3.1.3}$, then we have 
$$
\alpha_{\chi_{k}} (g_{i}) = \chi_{k}(g_{i}) x_{g_{i}}. 
$$
The $l$-th component of the Fourier transform $T$ of $\alpha_{\chi_{k}}$ is 
\begin{align*}
\widehat{\alpha_{\chi_{k}}} (\chi_{l}) 
&= \sum_{i = 1}^{n} \overline{\chi_{l}(g_{i})} \alpha_{\chi_{k}} (g_{i}) \\ 
&= \sum_{i = 1}^{n} \overline{\chi_{l}(g_{i})} \chi_{k}(g_{i}) x_{g_{i}} \\ 
&= \sum_{i = 1}^{n} (\chi_{k} (\chi_{l})^{-1}) (g_{i}) x_{g_{i}} \\ 
&= \alpha_{\chi_{k} \chi_{l}^{-1}}^{*}. 
\end{align*}
Therefore, by Theorem $\ref{thm:2.1.1}$, we have 
\begin{align*}
T(\alpha_{\chi_{1}}) T(\alpha_{\chi_{2}}) \cdots T(\alpha_{\chi_{n}}) 
&= \alpha_{\chi_{1}}^{*} \alpha_{\chi_{2}}^{*} \cdots \alpha_{\chi_{n}}^{*}(1, 1, \ldots, 1) \\ 
&= \Theta(G) (1, 1, \ldots, 1). 
\end{align*}
By Theorem $\ref{thm:3.1.2}$, we have
$$
\alpha_{\chi_{1}} \alpha_{\chi_{2}} \cdots \alpha_{\chi_{n}} = \Theta(G) e. 
$$
\end{proof}
Theorem~$\ref{thm:3.2.1}$ is stronger than Theorem~$\ref{thm:2.1.1}$ for finite abelian groups. 
In fact, let $F: \mathbb{C} G \rightarrow \mathbb{C}$ be a $\mathbb{C}$-linear ring homomorphism such that $F(g) = 1$ for all $g \in G$, 
Theorem~$\ref{thm:2.1.1}$ for finite abelian groups follows by applying $F$ to Theorem~$\ref{thm:3.2.1}$. 
Moreover, we obtain the following formula for inverse elements in the group algebra. 

\begin{cor}\label{cor:3.2.2}
Let $G$ be a finite abelian group and $\chi_{1}$ be the trivial representation of $G$. 
If $\Theta(G) \neq 0$, then we have 
$$
\left( \sum_{g \in G} x_{g} g \right)^{-1} = \frac{1}{\Theta(G)} \prod_{\chi \in \widehat{G} \setminus \{ \chi_{1} \} } \left( \sum_{g \in G} \chi (g) x_{g} g \right). 
$$
\end{cor}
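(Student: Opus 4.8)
The plan is to read off the corollary directly from Theorem~\ref{thm:3.2.1}. Write $\alpha = \sum_{g \in G} x_{g} g$ and, for $\chi \in \widehat{G}$, set $\alpha_{\chi} = \sum_{g \in G} \chi(g) x_{g} g \in \mathbb{C} G$, so that Theorem~\ref{thm:3.2.1} reads $\Theta(G) e = \prod_{\chi \in \widehat{G}} \alpha_{\chi}$. First I would note that since $G$ is abelian the group algebra $\mathbb{C} G$ is commutative, so the factors in this product commute and may be permuted freely. The factor indexed by the trivial representation is $\alpha_{\chi_{1}} = \sum_{g \in G} \chi_{1}(g) x_{g} g = \sum_{g \in G} x_{g} g = \alpha$, because $\chi_{1}(g) = 1$ for every $g \in G$. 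Pulling this factor out of the product, Theorem~\ref{thm:3.2.1} becomes
$$
\Theta(G)\, e = \alpha \prod_{\chi \in \widehat{G} \setminus \{ \chi_{1} \}} \alpha_{\chi}.
$$

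Next, the hypothesis $\Theta(G) \neq 0$ lets me divide by this scalar inside the $\mathbb{C}$-algebra $\mathbb{C} G$, giving
$$
e = \alpha \cdot \frac{1}{\Theta(G)} \prod_{\chi \in \widehat{G} \setminus \{ \chi_{1} \}} \alpha_{\chi}.
$$
By commutativity of $\mathbb{C} G$ the same element multiplies $\alpha$ on the left to give $e$ as well, so $\frac{1}{\Theta(G)} \prod_{\chi \in \widehat{G} \setminus \{ \chi_{1} \}} \alpha_{\chi}$ is a two-sided inverse of $\alpha$. Since inverses in a unital ring are unique, this element equals $\alpha^{-1}$, which is exactly the claimed formula.

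There is essentially no serious obstacle here: the corollary is a formal consequence of Theorem~\ref{thm:3.2.1}. The only points deserving a word of justification are the commutativity of $\mathbb{C} G$ for abelian $G$ (used both to isolate the $\chi_{1}$-factor and to upgrade a one-sided inverse to a two-sided one) and the observation that $\Theta(G) \neq 0$ is precisely what makes $\alpha$ a unit, the explicit inverse then being forced by uniqueness.
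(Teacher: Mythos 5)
Your argument is correct and is exactly the intended derivation: the paper states Corollary~\ref{cor:3.2.2} as an immediate consequence of Theorem~\ref{thm:3.2.1} without further proof, and your steps (isolating the $\chi_{1}$-factor $\alpha$, dividing by the nonzero scalar $\Theta(G)$, and using commutativity of $\mathbb{C}G$ together with uniqueness of inverses) supply precisely the routine verification the paper leaves implicit.
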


\section{\bf{Recall dihedral groups and generalized quaternion groups}}
In this section, we recall dihedral groups and generalized quaternion groups. 

\subsection{Recall dihedral groups}
We recall dihedral group $D_{m}$ given by the presentation 
$$
D_{m} = \langle a, b \: \vert \: a^{m} = e, b^{2} = e, b^{-1} a b = a^{-1} \rangle. 
$$
\begin{lem}\label{lem:4.1.1}
Every element $g \in D_{m}$ can be written uniquely as $g = a^{k} b^{l}$ where $0 \leq k < m$, and $l = 0, 1$. 
Namely, the order of $D_{m}$ is $2m$. 
\end{lem}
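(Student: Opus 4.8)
The plan is to establish the statement in two steps: an upper bound $|D_m| \le 2m$ obtained by reducing words to a normal form, and a matching lower bound $|D_m| \ge 2m$ obtained from an explicit model. For the upper bound, note that since $D_m$ is generated by $a$ and $b$, every element is a finite product of $a^{\pm 1}$ and $b^{\pm 1}$. From $b^2 = e$ we get $b^{-1} = b$, and from $b^{-1} a b = a^{-1}$ we get $b a = a^{-1} b$, hence by induction $b a^{j} = a^{-j} b$ for every integer $j$. Using these identities one moves every $b$ past every power of $a$ toward the right-hand end of a word, then uses $b^2 = e$ to collapse the block of $b$'s to a single factor $b^l$ with $l \in \{0,1\}$, and finally uses $a^m = e$ to reduce the exponent of $a$ modulo $m$. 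Thus every $g \in D_m$ equals $a^k b^l$ for some $0 \le k < m$ and $l \in \{0,1\}$, so $|D_m| \le 2m$.

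For the lower bound I would exhibit a group $H$ of order $2m$ containing elements $r, s$ with $r^m = 1$, $s^2 = 1$, $s^{-1} r s = r^{-1}$ and generated by $r$ and $s$; for instance one may take $H$ to be the subgroup of $GL_2(\mathbb{R})$ generated by the rotation matrix $r$ through angle $2\pi/m$ and the reflection $s = \mathrm{diag}(1,-1)$, or alternatively the semidirect product $\mathbb{Z}/m\mathbb{Z} \rtimes \mathbb{Z}/2\mathbb{Z}$. A direct check shows that $H$ consists of exactly the $2m$ pairwise distinct elements $r^k s^l$ with $0 \le k < m$ and $l \in \{0,1\}$. By the universal property of the presentation, the assignment $a \mapsto r$, $b \mapsto s$ extends to a surjective homomorphism $D_m \twoheadrightarrow H$, whence $|D_m| \ge |H| = 2m$. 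Combining the two bounds gives $|D_m| = 2m$; the surjection is then an isomorphism, so the $2m$ elements $a^k b^l$ are pairwise distinct, and in particular the representation $g = a^k b^l$ with $0 \le k < m$, $l \in \{0,1\}$ is unique.

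The main obstacle is the lower bound: the normal-form reduction is routine, but a presentation by itself does not guarantee that no further collapse among the words occurs, so one genuinely has to produce a concrete group of the correct order realizing the relations. Once such a model is available, matching it against the upper bound is immediate, and the final count $|D_m| = 2m$ follows at once.
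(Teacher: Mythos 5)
Your proof is correct. The paper itself offers no proof of this lemma --- it is stated as a recalled standard fact in the section reviewing dihedral groups --- so there is no argument of the author's to compare against. Your two-bound strategy is the standard rigorous one: the normal-form reduction using $b a = a^{-1} b$ (which follows correctly from $b^{-1}ab = a^{-1}$ and $b^2 = e$) gives $|D_m| \le 2m$, and you rightly identify that the genuine content lies in the lower bound, since a presentation alone never rules out further collapse among the normal forms. Exhibiting a concrete group of order $2m$ satisfying the relations (the rotation--reflection subgroup of $GL_2(\mathbb{R})$ or the semidirect product $\mathbb{Z}/m\mathbb{Z} \rtimes \mathbb{Z}/2\mathbb{Z}$) and invoking the universal property of the presentation closes that gap and yields both the order and the uniqueness of the expression $a^k b^l$. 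Nothing is missing.
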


We have the following list of irreducible representations of $D_{m}$(\cite[Theorem 3]{Augmentation quotients for complex representation rings of dihedral groups}). 
Let $\omega$ be a primitive $m$-th roots of unity, for $1 \leq k \leq m - 1$. 
\clearpage 
\begin{enumerate} 
\item When $m$ is odd, and $1 \leq l \leq \frac{m-1}{2}$. 
\begin{table}[h]
\begin{center}
\begin{tabular}{|l|c|c|c|c|} \hline 
& $e$ & $a^{k}$ & $b$ & $a^{k}b$ \\ \hline
$\chi_{1}$ & $1$ & $1$ & $1$ & $1$ \\
$\chi_{2}$ & $1$ & $1$ & $-1$ & $-1$ \\
$\varphi_{l}$ 
& $\begin{bmatrix} $1$ & $0$ \\ $0$ & $1$ \end{bmatrix}$ 
& $\begin{bmatrix} \omega^{lk} & $0$ \\ $0$ & \omega^{-lk} \end{bmatrix}$ 
& $\begin{bmatrix} $0$ & $1$ \\ $1$ & $0$ \end{bmatrix}$ 
& $\begin{bmatrix} $0$ & \omega^{lk} \\ \omega^{-lk} & $0$ \end{bmatrix}$ 
\\ \hline
\end{tabular}
\end{center}
\end{table}
\item When $m$ is even, and $1 \leq l \leq \frac{m}{2}-1$. 
\begin{table}[h]
\begin{center}
\begin{tabular}{|l|c|c|c|c|} \hline 
& $e$ & $a^{k}$ & $b$ & $a^{k}b$ \\ \hline
$\chi_{1}$ & $1$ & $1$ & $1$ & $1$ \\
$\chi_{2}$ & $1$ & $1$ & $-1$ & $-1$ \\
$\chi_{3}$ & $1$ & $(-1)^{k}$ & $1$ & $(-1)^{k}$ \\
$\chi_{4}$ & $1$ & $(-1)^{k}$ & $-1$ & $(-1)^{k+1}$ \\
$\varphi_{l}$ 
& $\begin{bmatrix} $1$ & $0$ \\ $0$ & $1$ \end{bmatrix}$ 
& $\begin{bmatrix} \omega^{lk} & $0$ \\ $0$ & \omega^{-lk} \end{bmatrix}$ 
& $\begin{bmatrix} $0$ & $1$ \\ $1$ & $0$ \end{bmatrix}$ 
& $\begin{bmatrix} $0$ & \omega^{lk} \\ \omega^{-lk} & $0$ \end{bmatrix}$ 
\\ \hline
\end{tabular}
\end{center}
\end{table}
\end{enumerate}

\subsection{Recall generalized quaternion groups}
We recall generalized quaternion group $Q_{m}$ given by the presentation 
$$
Q_{m} = \langle a, b \: \vert \: a^{2m} = e, b^{2} = a^{m}, b^{-1} a b = a^{-1} \rangle. 
$$
\begin{lem}\label{lem:4.2.1}
Every element $g \in Q_{m}$ can be written uniquely as $g = a^{k} b^{l}$ where $0 \leq k < 2m$, and $l = 0, 1$. 
Namely, the order of $Q_{m}$ is $4m$. 
\end{lem}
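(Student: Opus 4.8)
\textbf{Proof proposal for Lemma~\ref{lem:4.2.1}.}
The plan is to treat the presentation as a concrete recipe for listing elements and then pin down the order by exhibiting $Q_m$ as (a quotient of) a group we already understand. First I would verify that every element of $Q_m$ can be brought into the stated normal form $a^k b^l$ with $0 \le k < 2m$ and $l \in \{0,1\}$. Starting from an arbitrary word in $a^{\pm 1}, b^{\pm 1}$, the relation $b^{-1} a b = a^{-1}$ lets me push every occurrence of $b$ to the right past any power of $a$ at the cost of inverting that power; the relation $b^2 = a^m$ lets me collapse any block of two or more $b$'s into a power of $a$ (so only $b^0$ or $b^1$ survive); and $a^{2m} = e$ reduces the exponent of $a$ modulo $2m$. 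Carrying these rewrites out in order shows the $4m$ listed words $\{\, a^k b^l : 0 \le k < 2m,\ l = 0,1 \,\}$ already form a set closed under multiplication and inverses, hence $|Q_m| \le 4m$.

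For the reverse inequality — that these $4m$ words are genuinely distinct — I would produce an explicit group of order $4m$ satisfying the three relations, so that the von Dyck (substitution) principle gives a surjection from $Q_m$ onto it, forcing $|Q_m| \ge 4m$. The standard choice is the matrix realization inside $\mathrm{GL}_2(\mathbb{C})$: send $a$ to $\operatorname{diag}(\zeta, \zeta^{-1})$ with $\zeta$ a primitive $2m$-th root of unity, and $b$ to $\begin{bmatrix} 0 & 1 \\ -1 & 0 \end{bmatrix}$. One checks directly that $a^{2m} = I$, that $b^2 = -I = a^m$ (since $\zeta^m = -1$), and that $b^{-1} a b = a^{-1}$; the $4m$ images are visibly distinct (the $a^k$ are distinct diagonal matrices, and $a^k b$ is anti-diagonal, hence never equal to any diagonal $a^j$). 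Combining the two inequalities yields $|Q_m| = 4m$ and uniqueness of the normal form.

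I do not expect a serious obstacle here; this is the same argument one uses for $D_m$ (Lemma~\ref{lem:4.1.1}), only with the twist that $b^2 = a^m \ne e$. The one point that deserves care — and the closest thing to a hard step — is making sure the rewriting system in the first half actually terminates and is confluent, i.e.\ that no matter in what order one applies $b^2 \mapsto a^m$, $b^{-1} a \mapsto a^{-1} b^{-1}$, and $a^{2m} \mapsto e$, one lands on the same normal form; the cleanest way to sidestep any residual worry about confluence is to lean on the matrix model, which certifies that the $4m$ candidate normal forms are pairwise distinct and thus that the counting upper bound $4m$ is attained. Alternatively, one can observe that $\langle a \rangle$ is a cyclic subgroup of order $2m$ (again certified by the matrix model) which is normal since $b^{-1} a b = a^{-1} \in \langle a \rangle$, and that $b \notin \langle a \rangle$ because $b^2 = a^m$ has order $2$ forces $b$ to have order $4$ while every element of $\langle a \rangle$ outside has the wrong order relation; then $[Q_m : \langle a \rangle] = 2$ gives $|Q_m| = 4m$ directly.
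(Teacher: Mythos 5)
The paper offers no proof of Lemma~\ref{lem:4.2.1}; it is stated (like Lemma~\ref{lem:4.1.1}) as a recalled standard fact, so there is no argument of the author's to compare against. Your proof is correct and complete: the rewriting with $b^{-1}ab=a^{-1}$, $b^{2}=a^{m}$ and $a^{2m}=e$ gives $\lvert Q_{m}\rvert\le 4m$, and the representation $a\mapsto\operatorname{diag}(\zeta,\zeta^{-1})$, $b\mapsto\left[\begin{smallmatrix}0&1\\-1&0\end{smallmatrix}\right]$ (which is exactly the degree two representation $\varphi_{1}$ tabulated in Section~4.2 of the paper) certifies that the $4m$ normal forms are pairwise distinct, so no confluence analysis of the rewriting system is needed. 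The only weak spot is the closing alternative: the sentence arguing $b\notin\langle a\rangle$ from order considerations does not really parse --- the clean statement is that if $b\in\langle a\rangle$ then $b$ commutes with $a$, so $a^{-1}=b^{-1}ab=a$ forces $a^{2}=e$, contradicting the order $2m$ of $a$ already certified by the matrix model (or simply that the image of $b$ is anti-diagonal while every element of $\langle a\rangle$ maps to a diagonal matrix). Since that alternative is redundant with your main argument, this does not affect correctness.
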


We have the following list of irreducible representations of $Q_{m}$. 
Let $\omega$ be a primitive $2m$-th roots of unity, for $1 \leq k \leq 2m - 1$. 
\begin{enumerate}
\item When $m$ is odd, and $1 \leq l \leq m - 1$. 
\begin{table}[h]
\begin{center}
\begin{tabular}{|l|c|c|c|c|} \hline 
& $e$ & $a^{k}$ & $b$ & $a^{k}b$ \\ \hline
$\chi_{1}$ & $1$ & $1$ & $1$ & $1$ \\
$\chi_{2}$ & $1$ & $1$ & $-1$ & $-1$ \\
$\chi_{3}$ & $1$ & $(-1)^{k}$ & $i$ & $i (-1)^{k}$ \\ 
$\chi_{4}$ & $1$ & $(-1)^{k}$ & $-i$ & $i (-1)^{k+1}$ \\ 
$\varphi_{l}$ 
& $\begin{bmatrix} 1 & 0 \\ 0 & 1 \end{bmatrix}$ 
& $\begin{bmatrix} \omega^{lk} & 0 \\ 0 & \omega^{-lk} \end{bmatrix}$ 
& $\begin{bmatrix} 0 & 1 \\ -1 & 0 \end{bmatrix}$ 
& $\begin{bmatrix} 0 & \omega^{lk} \\ - \omega^{-lk} & 0 \end{bmatrix}$ 
\\ \hline
\end{tabular}
\end{center}
\end{table}
\item When $m$ is even, and $1 \leq l \leq m - 1$. 
\begin{table}[h]
\begin{center}
\begin{tabular}{|l|c|c|c|c|} \hline 
& $e$ & $a^{k}$ & $b$ & $a^{k}b$ \\ \hline
$\chi_{1}$ & $1$ & $1$ & $1$ & $1$ \\
$\chi_{2}$ & $1$ & $1$ & $-1$ & $-1$ \\
$\chi_{3}$ & $1$ & $(-1)^{k}$ & $1$ & $(-1)^{k}$ \\
$\chi_{4}$ & $1$ & $(-1)^{k}$ & $-1$ & $(-1)^{k+1}$ \\
$\varphi_{l}$ 
& $\begin{bmatrix} 1 & 0 \\ 0 & 1 \end{bmatrix}$ 
& $\begin{bmatrix} \omega^{lk} & 0 \\ 0 & \omega^{-lk} \end{bmatrix}$ 
& $\begin{bmatrix} 0 & 1 \\ -1 & 0 \end{bmatrix}$ 
& $\begin{bmatrix} 0 & \omega^{lk} \\ - \omega^{-lk} & 0 \end{bmatrix}$ 
\\ \hline
\end{tabular}
\end{center}
\end{table}
\end{enumerate}

\section{\bf{Main results for dihedral groups and generalized quaternion groups}}
In this section, 
we describe the group determinant of dihedral groups and generalized quaternion groups as a circulant determinant of homogeneous polynomials and
we give an analog of Frobenius' theorem for dihedral groups and generalized quaternion groups about factorization of the group determinant on the group algebra.

\subsection{Describe some group determinant as a circulant determinant}
Let $G$ be $D_{m}$ or $Q_{m}$, 
$\langle a \rangle$ be the subgroup generated by $\{ a \}$ and define 
$$
\chi_{l}' (a^{k}) = \omega^{l k} 
$$
for $0 \leq l \leq \vert \langle a \rangle \vert - 1 $ where $\vert H \vert$ is the order of the group $H$. 
Notice that $\chi_{l}'$ is a degree one representation of $\langle a \rangle$. 
Let $A_{h}$ be a homogeneous polynomial defined by 
$$
A_{h} = \sum_{g \in \langle a \rangle} \left( x_{g} x_{h g} - x_{g b} x_{h g b^{-1}} \right) 
$$
for $h \in G$. 
This homogeneous polynomial $A_{h}$ has the following properties. 

\begin{lem}\label{lem:5.1.1}
Let $G$ be $D_{m}$ or $Q_{m}$. 
Then, we have 
\begin{enumerate}
\item If h $\in G \setminus \langle a \rangle,\ A_{h} = 0$. 
\item For all $h \in G,\ A_{h} = A_{h^{-1}}$. 
\end{enumerate}
\end{lem}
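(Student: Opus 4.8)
The plan is to prove both parts by direct computation inside $D_m$ and $Q_m$, using only the relation $ba^k = a^{-k}b$ together with a reindexing of the sum over the cyclic subgroup $\langle a\rangle$. Write $N = |\langle a\rangle|$ (so $N = m$ for $D_m$ and $N = 2m$ for $Q_m$) and read all exponents of $a$ modulo $N$.

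For (1), by Lemma~\ref{lem:4.1.1} and Lemma~\ref{lem:4.2.1} an element $h \in G\setminus\langle a\rangle$ can be written $h = a^{j}b$. Taking $g = a^{k}$, one computes $hg = a^{j}ba^{k} = a^{j-k}b$, hence $x_{hg} = x_{a^{j-k}b}$, while $gb = a^{k}b$ and $hgb^{-1} = (a^{j-k}b)b^{-1} = a^{j-k}$; note that $b^{-1}$ need not be made explicit here since $bb^{-1} = e$. Thus
$$
A_{h} = \sum_{k} x_{a^{k}} x_{a^{j-k}b} - \sum_{k} x_{a^{k}b} x_{a^{j-k}},
$$
and the substitution $k \mapsto j-k$ turns the second sum into $\sum_{k} x_{a^{j-k}b}x_{a^{k}}$, which is exactly the first sum; hence $A_{h} = 0$.

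For (2), I would first observe that if $h \in G\setminus\langle a\rangle$ then $h^{-1} \in G\setminus\langle a\rangle$ as well (a subgroup is closed under inversion), so $A_{h} = 0 = A_{h^{-1}}$ by part (1), and we may assume $h = a^{j} \in \langle a\rangle$. Then for $g = a^{k}$ we have $hg = a^{k+j}$ and $hgb^{-1} = a^{k+j}b^{-1}$, where $b^{-1} = b$ in $D_m$ and $b^{-1} = a^{m}b$ in $Q_m$. The substitution $k \mapsto k-j$ gives $\sum_{k} x_{a^{k}}x_{a^{k+j}} = \sum_{k} x_{a^{k}}x_{a^{k-j}}$, matching the first terms of $A_{a^{-j}}$; an analogous reindexing handles the $\sum_{k} x_{a^{k}b}x_{a^{k+j}b^{-1}}$ terms, where in the $Q_m$ case one additionally uses $a^{-m} = a^{m}$ (because $a$ has order $2m$) to replace $a^{k+j+m}$ by $a^{k-j+m}$ after the shift. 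Hence $A_{a^{j}} = A_{a^{-j}}$.

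All of this is routine bookkeeping; the only point requiring care is treating the two group families uniformly — in particular the differing value of $b^{-1}$ and the modulus $N$ — and I do not anticipate a genuine obstacle, the identity $a^{-m} = a^{m}$ in $Q_m$ being precisely what absorbs the extra $b$-twist in part (2).
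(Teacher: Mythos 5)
Your proof is correct and follows essentially the same route as the paper: a direct computation over $\langle a\rangle$ using $ba^{k}=a^{-k}b$ together with a reindexing of the sum (the paper packages your case split on $b^{-1}$ into the single substitution $g\mapsto h^{-1}gb^{-2}$ with $b^{-3}=b$, which is what your identity $a^{-m}=a^{m}$ in $Q_m$ amounts to). No gaps.
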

\begin{proof}
Element $h \in G \setminus \langle a \rangle$ can be written as $a^{k} b$ for $0 \leq k \leq \vert \langle a \rangle \vert - 1$. 
If $g \in \langle a \rangle$, then $bg = g^{-1} b$ and we can write $g$ as $a^{k} g^{-1}$. 
From these, we have 
\begin{align*}
A_{a^{k} b} 
&= \sum_{g \in \langle a \rangle} x_{g} x_{a^{k} b g} - \sum_{g \in \langle a \rangle} x_{g b} x_{a^{k} b g b^{-1}} \\ 
&= \sum_{g \in \langle a \rangle} x_{a^{k} g^{-1}} x_{a^{k} b a^{k} g^{-1}} - \sum_{g \in \langle a \rangle} x_{g b} x_{a^{k} g^{-1}} \\ 
&= \sum_{g \in \langle a \rangle} x_{a^{k} g^{-1}} x_{g b} - \sum_{g \in \langle a \rangle} x_{a^{k} g^{-1}} x_{g b} \\
&= 0. 
\end{align*}
This proves the first claim. 
If $h \in G \setminus \langle a \rangle$, 
then, we have $h^{-1} \in G \setminus \langle a \rangle$. 
(2) follows from first claim. 
We assume that $h \in \langle a \rangle$. 
We can write $g$ as $h^{-1}g$ in the first sum and $g$ as $g = h^{-1} g b^{-2}$ in the second.
From this and $b^{-3} = b$, we have 
\begin{align*}
A_{h} 
&= \sum_{g \in \langle a \rangle} x_{g} x_{h g} - \sum_{g \in \langle a \rangle} x_{g b} x_{h g b^{-1}} \\ 
&= \sum_{g \in \langle a \rangle} x_{(h^{-1} g)} x_{h (h^{-1} g)} - \sum_{g \in \langle a \rangle} x_{(h^{-1} g b^{-2}) b} x_{h (h^{-1} g b^{-2}) b^{-1}} \\ 
&= \sum_{g \in \langle a \rangle} x_{h^{-1} g} x_{g} - \sum_{g \in \langle a \rangle} x_{h^{-1} g b^{-1}} x_{g b^{-3}} \\ 
&= \sum_{g \in \langle a \rangle} x_{g} x_{h^{-1} g} - \sum_{g \in \langle a \rangle} x_{h^{-1} g b^{-1}} x_{g b} \\ 
&= A_{h^{-1}}. 
\end{align*}
This completes the proof. 
\end{proof}

\begin{lem}\label{lem:5.1.2}
Let $G$ be $D_{m}$ or $Q_{m}$ and $\chi_{l} (1 \leq l \leq 4)$ be a degree one representation of Section 4. 
Then, we have 
\begin{align*}
\sum_{g \in G} \chi_{1}(g) x_{g} \sum_{g' \in G} \chi_{2}(g') x_{g'} &= \sum_{h \in \langle a \rangle} A_{h}, \\ 
\sum_{g \in G} \chi_{3}(g) x_{g} \sum_{g' \in G} \chi_{4}(g') x_{g'} &= \sum_{h \in \langle a \rangle} \chi_{\frac{\vert \langle a \rangle \vert}{2}}'(h) A_{h}. 
\end{align*}
\end{lem}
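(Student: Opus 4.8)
The plan is to expand both products of linear forms directly and reorganize the resulting double sums by splitting each copy of $G$ into the coset decomposition $G = \langle a \rangle \sqcup \langle a \rangle b$, which is available by Lemma~\ref{lem:4.1.1} and Lemma~\ref{lem:4.2.1}. For the first identity, I would write
\begin{align*}
\sum_{g \in G} \chi_{1}(g) x_{g} \sum_{g' \in G} \chi_{2}(g') x_{g'}
&= \Bigl( \sum_{g \in \langle a \rangle} x_{g} + \sum_{g \in \langle a \rangle} x_{gb} \Bigr)
\Bigl( \sum_{g' \in \langle a \rangle} x_{g'} - \sum_{g' \in \langle a \rangle} x_{g'b} \Bigr),
\end{align*}
using $\chi_{1} \equiv 1$ and $\chi_{2}(a^{k}) = 1$, $\chi_{2}(a^{k}b) = -1$. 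Multiplying out gives four double sums over $\langle a \rangle \times \langle a \rangle$; in each I would substitute $g' = g^{-1} h$ (equivalently reindex the inner variable by $h = g g'$ running over $\langle a \rangle$) so that each term becomes $\sum_{h} \sum_{g} (\text{monomial depending on } h)$. The cross terms $x_{g} x_{g'b}$ and $x_{gb} x_{g'}$ should cancel after a change of variables, and the surviving terms $x_{g} x_{g'} - x_{gb} x_{g'b}$ reassemble, coset by coset, into $\sum_{h \in \langle a \rangle} A_{h}$ by the very definition of $A_{h}$.

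\textbf{Second identity.} The second identity is handled the same way but the bookkeeping of the characters $\chi_{3}, \chi_{4}$ is the delicate point. Here I would use $\chi_{3}(a^{k}) = \chi_{4}(a^{k}) = (-1)^{k}$ and the sign behavior on the non-trivial coset, noting that $(-1)^{k} = \omega^{(\vert \langle a \rangle \vert / 2) k} = \chi_{\vert \langle a \rangle \vert / 2}'(a^{k})$; this is exactly where the weight $\chi_{\vert \langle a \rangle \vert / 2}'(h)$ in the claimed formula comes from. So I would factor $\chi_{3}(g) = \chi_{\vert \langle a \rangle \vert / 2}'(\pi(g)) \cdot \varepsilon_{3}(g)$ where $\pi$ projects to $\langle a \rangle$ and $\varepsilon_{3}, \varepsilon_{4}$ encode only the $b$-part signs, and similarly for $\chi_{4}$. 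After multiplying out and reindexing the inner sum by $h = g g'$, the factor $\chi_{\vert \langle a \rangle \vert / 2}'$ depends only on $h$ (because it is a homomorphism on $\langle a \rangle$ and $\chi_{\vert \langle a \rangle \vert/2}'(g)\chi_{\vert \langle a \rangle \vert/2}'(g') = \chi_{\vert \langle a \rangle \vert/2}'(gg')$) and can be pulled outside the $g$-sum, while the $\varepsilon$-signs produce precisely the $+,-$ pattern that builds $A_{h}$.

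\textbf{Main obstacle.} I expect the main difficulty to be the change of variables in the mixed-coset terms and checking that they genuinely cancel rather than contribute. When one factor supplies $x_{g}$ (from $\langle a \rangle$) and the other supplies $x_{g'b}$ (from $\langle a \rangle b$), the reindexing that aligns this with a term of $A_{h}$ must account for the relation $bg = g^{-1}b$ in $D_{m}$ and $Q_{m}$; one must be careful that in $Q_{m}$ we have $b^{2} = a^{m} \neq e$, so "$b^{-1}$" is not "$b$" and the substitutions $x_{gb}x_{g'b} \mapsto$ (term in $A_{h}$) should be matched against the defining formula $A_{h} = \sum_{g} (x_{g}x_{hg} - x_{gb}x_{hgb^{-1}})$ exactly as written, with the $b^{-1}$ intact. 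A clean way to organize this is to prove both identities simultaneously by writing $\chi_{1}\chi_{2}$ and $\chi_{3}\chi_{4}$ in the uniform shape (sign $\varepsilon$ on the $b$-coset) $\times$ (character $\rho$ on $\langle a \rangle$, with $\rho$ trivial in the first case and $\rho = \chi_{\vert\langle a\rangle\vert/2}'$ in the second), so that the coset expansion and cancellation argument is carried out once. With the cancellations verified, the remaining step is purely a matter of recognizing the definition of $A_{h}$, which is immediate, and invoking Lemma~\ref{lem:5.1.1}(1) to note that only $h \in \langle a \rangle$ contributes, so the outer sum over $\langle a \rangle$ is the whole story.
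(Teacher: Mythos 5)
Your plan is correct and the computation it outlines does go through: the cross-coset terms cancel because $\chi_{2}$ (resp.\ the pair $\varepsilon_{3},\varepsilon_{4}$ with $\varepsilon_{3}(b)+\varepsilon_{4}(b)=0$) puts opposite weights on the two mixed products, and the surviving diagonal and $b$-coset blocks reindex onto $\sum_{h}\rho(h)\sum_{g}(x_{g}x_{hg}-x_{gb}x_{hgb^{-1}})$ with $\rho$ trivial or $\chi'_{\vert\langle a\rangle\vert/2}$; you are also right that the only delicate points are $b^{-1}=a^{m}b\neq b$ in $Q_{m}$ and the identification $(-1)^{k}=\chi'_{\vert\langle a\rangle\vert/2}(a^{k})$. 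However, your route is genuinely different from the paper's. The paper does not prove Lemma~\ref{lem:5.1.2} directly at all: it defers the proof, establishes the stronger group-algebra identities $\alpha_{1}\alpha_{2}=\sum_{h\in\langle a\rangle}A_{h}h$ and $\alpha_{3}\alpha_{4}=\sum_{h\in\langle a\rangle}\chi'_{\vert\langle a\rangle\vert/2}(h)A_{h}h$ (Lemmas~\ref{lem:5.2.1} and~\ref{lem:5.2.2}), and then obtains Lemma~\ref{lem:5.1.2} in one line by applying the augmentation homomorphism $F$. The computational content is essentially the same --- your expansion is the image under $F$ of the paper's group-algebra computation, including the same coset split, the change of variables $h=g'g$, and the use of $bg=g^{-1}b$ --- but the paper's ordering buys the stronger statement needed for Theorem~\ref{thm:5.2.3} at no extra cost, whereas your version is self-contained, avoids the group algebra entirely, and is closer in spirit to the paper's direct proof of the degree-two analogue, Lemma~\ref{lem:5.1.3}. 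One small caveat: your ``projection'' $\pi:G\to\langle a\rangle$ is not a homomorphism, so the factorization $\chi_{3}=(\chi'_{\vert\langle a\rangle\vert/2}\circ\pi)\cdot\varepsilon_{3}$ should be used only coset by coset as bookkeeping, which is how you in fact use it.
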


This lemma will be proved later. 

\begin{lem}\label{lem:5.1.3}
Let $G$ be $D_{m}$ or $Q_{m}$ and $\varphi_{l}$ be degree two representations of Section~4. 
Then, we have 
$$
\det{\left( \sum_{g \in G} \varphi_{l}(g) x_{g} \right) } = \sum_{h \in \langle a \rangle} \chi_{l}'(h) A_{h}. 
$$
\end{lem}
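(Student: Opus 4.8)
\textbf{Proof proposal for Lemma~\ref{lem:5.1.3}.}
The plan is to compute the $2 \times 2$ matrix $M_l := \sum_{g \in G} \varphi_l(g) x_g$ explicitly by splitting the sum over $G$ into the coset $\langle a \rangle$ and the coset $\langle a \rangle b$, using the block shapes recorded in Section~4. Writing $g = a^k$ for the first coset and $g = a^k b$ for the second, each $\varphi_l(a^k)$ is diagonal with entries $\omega^{lk}, \omega^{-lk}$ and each $\varphi_l(a^k b)$ is anti-diagonal with entries $\omega^{lk}$ and $\pm \omega^{-lk}$ (the sign being $+1$ for $D_m$ and $-1$ for $Q_m$). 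Hence $M_l$ has the form
\begin{align*}
M_l = \begin{bmatrix} p & q \\ r & s \end{bmatrix}, \quad
p = \sum_{a^k} \omega^{lk} x_{a^k}, \quad
s = \sum_{a^k} \omega^{-lk} x_{a^k}, \quad
q = \sum_{a^k} \omega^{lk} x_{a^k b}, \quad
r = \pm \sum_{a^k} \omega^{-lk} x_{a^k b}.
\end{align*}
Then $\det M_l = ps - qr$, and the goal is to identify this with $\sum_{h \in \langle a \rangle} \chi_l'(h) A_h = \sum_k \omega^{lk} A_{a^k}$, where $A_{a^k} = \sum_{g \in \langle a \rangle}(x_g x_{a^k g} - x_{gb} x_{a^k g b^{-1}})$.

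The key computational step is to expand $ps$ and $qr$ as double sums over $\langle a \rangle \times \langle a \rangle$ and reindex. For $ps$ we get $\sum_{j,k} \omega^{l(j-k)} x_{a^j} x_{a^k}$; substituting $j = k + t$ (indices mod $|\langle a \rangle|$) turns this into $\sum_t \omega^{lt} \sum_k x_{a^k} x_{a^{k+t}} = \sum_t \omega^{lt} \sum_{g \in \langle a \rangle} x_g x_{a^t g}$, which is the first half of $\sum_t \omega^{lt} A_{a^t}$. The term $qr$ requires the relation $b^{-1} a^k b = a^{-k}$ so that $a^k b = b a^{-k}$; one expands $qr = \pm \sum_{j,k} \omega^{l(j-k)} x_{a^j b} x_{a^k b}$ and, using $x_{a^j b} = x_{b a^{-j}}$ together with the substitution matching the definition $x_{gb} x_{a^t g b^{-1}}$ in $A_{a^t}$ (noting $b^{-1} = b$ in $D_m$ and $b^{-1} = a^m b$ in $Q_m$, which is exactly where the $\pm$ sign from $\varphi_l(a^k b)$ compensates), we obtain $qr = \sum_t \omega^{lt} \sum_{g} x_{gb} x_{a^t g b^{-1}}$. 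Subtracting gives $\det M_l = \sum_t \omega^{lt}\big(\sum_g x_g x_{a^t g} - \sum_g x_{gb} x_{a^t g b^{-1}}\big) = \sum_{h \in \langle a\rangle} \chi_l'(h) A_h$, as desired.

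The main obstacle I expect is the bookkeeping in the $qr$ term: keeping track of the exponents under the relation $b^{-1} a b = a^{-1}$, the distinction between $b^{-1} = b$ (dihedral) and $b^{2} = a^{m}$ (quaternion), and verifying that the sign $-1$ appearing in the anti-diagonal of $\varphi_l(a^k b)$ for $Q_m$ cancels precisely with the sign introduced when rewriting $x_{a^k b}$ in terms of the standard form used in $A_h$. Once the substitution is chosen so that the summation variable in $qr$ matches the variable $g$ in the second sum of $A_{a^t}$, the identity falls out; but choosing that substitution correctly in both cases simultaneously is the delicate point. I would handle $D_m$ and $Q_m$ in parallel, isolating the one place where the presentations differ, rather than splitting into two separate arguments.
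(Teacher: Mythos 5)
Your proposal is correct and takes essentially the same route as the paper's proof: split $\sum_{g\in G}\varphi_l(g)x_g$ over the cosets $\langle a\rangle$ and $\langle a\rangle b$, write the resulting $2\times 2$ matrix explicitly, expand $ps-qr$ as double sums over $\langle a\rangle\times\langle a\rangle$, and reindex so that the sign from the anti-diagonal of $\varphi_l(b)$ cancels against $\chi_l'(b^{-2})$. The paper packages this last cancellation as the identity $\delta(G)\,\chi_l'(b^{-2})=1$ with $\delta(D_m)=1$, $\delta(Q_m)=-1$, which is exactly the compensation you single out as the delicate point.
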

\begin{proof}
Define the function $\delta$ by 
\begin{align*}
\delta(G) = 
\begin{cases} 
1 & G =D_{m}, \\ 
-1 & G = Q_{m}. 
\end{cases} 
\end{align*} 
We obtain 
\begin{align}
\det{ \left( \sum_{g \in G} \varphi_{l}(g) x_{g} \right) } 
&= \det{ \left( \sum_{g \in \langle a \rangle} \varphi_{l}(g) x_{g} + \sum_{g \in G \setminus \langle a \rangle} \varphi_{l}(g) x_{g} \right) } \nonumber \\ 
&= \det{ \left( \sum_{g \in \langle a \rangle} \varphi_{l}(g) x_{g} + \sum_{g \in \langle a \rangle} \varphi_{l}(g b) x_{g b} \right) } \nonumber \\ 
&= \det{ 
\begin{bmatrix} 
\displaystyle\sum_{g \in \langle a \rangle} \chi_{l}'(g) x_{g} & \displaystyle\sum_{g \in \langle a \rangle} \chi_{l}'(g) x_{g b} \\ 
\delta(G) \displaystyle\sum_{g \in \langle a \rangle} \chi_{l}'(g^{-1}) x_{g b} & \displaystyle\sum_{g \in \langle a \rangle} \chi_{l}'(g^{-1}) x_{g}
\end{bmatrix} } \nonumber \\ 
&= \sum_{h \in \langle a \rangle} \sum_{g \in \langle a \rangle} \chi_{l}'(h g^{-1}) x_{h} x_{g} 
- \delta(G) \sum_{g \in \langle a \rangle} \sum_{h \in \langle a \rangle} \chi_{l}'(g^{-1} h) x_{g b} x_{h b}. \label{eq:5.1.3-1}
\end{align}

We can write $h$ as $h g$ in the first sum and $h$ as $h g b^{-2}$ in the second. 
Then~$(\ref{eq:5.1.3-1})$ equals 
\begin{align} 
&\sum_{h \in \langle a \rangle} \chi_{l}'(h) \sum_{g \in \langle a \rangle} x_{g} x_{h g} - 
\delta(G) \sum_{g \in \langle a \rangle} \sum_{h \in \langle a \rangle} \chi_{l}'(g^{-1} (h g b^{-2})) x_{g b} x_{(h g b^{-2}) b} \nonumber \\ 
= &\sum_{h \in \langle a \rangle} \chi_{l}'(h) \sum_{g \in \langle a \rangle} x_{g} x_{h g} - 
\delta(G) \chi_{l}'(b^{-2}) \sum_{h \in \langle a \rangle} \chi_{l}'(h) \sum_{g \in \langle a \rangle} x_{g b} x_{h g b^{-1}}. \label{eq:5.1.3-2}
\end{align}
It is easy to see that $\delta(G) \chi_{l}'(b^{-2}) = 1$. 
Then, $(\ref{eq:5.1.3-2})$ equals 
\begin{align*} 
\sum_{h \in \langle a \rangle} \chi_{l}'(h) \left( \sum_{g \in \langle a \rangle} (x_{g} x_{h g} - x_{g b} x_{h g b^{-1}}) \right) 
= \sum_{h \in \langle a \rangle} \chi_{l}'(h) A_{h}
\end{align*}
as required. 
\end{proof}

\begin{lem}\label{lem:5.1.4}
Let $\chi'$ be a degree one representation of $\langle a \rangle$. 
Then, we have 
$$
\sum_{g \in \langle a \rangle} \chi'(g) A_{g} = \sum_{g \in \langle a \rangle} \overline{\chi'(g)} A_{g}. 
$$
\end{lem}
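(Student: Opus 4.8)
The statement to prove is Lemma~\ref{lem:5.1.4}: for any degree one representation $\chi'$ of $\langle a \rangle$, we have $\sum_{g \in \langle a \rangle} \chi'(g) A_{g} = \sum_{g \in \langle a \rangle} \overline{\chi'(g)} A_{g}$. The plan is to exploit the symmetry of the coefficients $A_{g}$ already established in Lemma~\ref{lem:5.1.1}(2), namely $A_{g} = A_{g^{-1}}$ for all $g \in G$, together with the elementary fact that for a degree one representation $\chi'$ of a finite group one has $\overline{\chi'(g)} = \chi'(g)^{-1} = \chi'(g^{-1})$, since the values of $\chi'$ are roots of unity.

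First I would write out the right-hand side and substitute $\overline{\chi'(g)} = \chi'(g^{-1})$, obtaining $\sum_{g \in \langle a \rangle} \chi'(g^{-1}) A_{g}$. Next I would reindex the sum by replacing the summation variable $g$ with $g^{-1}$; since inversion is a bijection of the finite group $\langle a \rangle$ onto itself, this is a legitimate relabeling and yields $\sum_{g \in \langle a \rangle} \chi'(g) A_{g^{-1}}$. Finally I would invoke Lemma~\ref{lem:5.1.1}(2) to replace $A_{g^{-1}}$ by $A_{g}$, which gives exactly the left-hand side and completes the argument.

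There is essentially no obstacle here: the lemma is a two-line consequence of a symmetry already in hand, and the only thing to be slightly careful about is the justification that $\overline{\chi'(g)}=\chi'(g^{-1})$ — this holds because $\chi'(g)$ is a root of unity (the group $\langle a \rangle$ being finite and cyclic), so its complex conjugate equals its inverse, which equals its value at $g^{-1}$ by multiplicativity. I would also note that one could alternatively phrase the proof in terms of the explicit characters $\chi'_{l}(a^{k}) = \omega^{lk}$ from Section~5.1, where $\overline{\chi'_{l}(a^k)} = \omega^{-lk} = \chi'_{l}(a^{-k})$, making the reindexing step completely transparent; but the abstract version is cleaner and is the one I would present.
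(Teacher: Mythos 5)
Your proof is correct and is exactly the argument the paper intends: its own proof is the one-line remark that the lemma ``follows from Lemma~\ref{lem:5.1.1}'', and your reindexing $g \mapsto g^{-1}$ combined with $\overline{\chi'(g)} = \chi'(g^{-1})$ and $A_{g} = A_{g^{-1}}$ is the standard way to fill in that step. No differences worth noting.
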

\begin{proof}
This follows from Lemma $\ref{lem:5.1.1}$. 
\end{proof}

Let $\widehat{H}$ be a complete set of irreducible representations of the group $H$. 
We describe $\Theta(G)$ as a circulant determinant of $A_{h}$. 

\begin{thm}\label{thm:5.1.5}
Let $G$ be $D_{m}$ or $Q_{m}$. 
Then, we have 
$$
\Theta(G) = \prod_{\chi' \in \widehat{\langle a \rangle}} \sum_{g \in \langle a \rangle} \chi'(g) A_{g}. 
$$
\end{thm}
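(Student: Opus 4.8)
The plan is to invoke Frobenius' theorem (Theorem~\ref{thm:2.1.1}) and then rewrite each factor using the explicit character tables of Section~4 together with Lemmas~\ref{lem:5.1.2} and~\ref{lem:5.1.3}. Write $n = |\langle a \rangle|$, so that $\widehat{\langle a \rangle} = \{ \chi_0', \chi_1', \ldots, \chi_{n-1}' \}$ with $\chi_l'(a^k) = \omega^{lk}$. By Frobenius,
\begin{align*}
\Theta(G)
= \prod_{i} \left( \sum_{g \in G} \chi_i(g) x_g \right) \cdot \prod_{l} \det\!\left( \sum_{g \in G} \varphi_l(g) x_g \right)^{2},
\end{align*}
where the first product runs over the degree one representations $\chi_i$ and the second over the degree two representations $\varphi_l$ of Section~4.

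First I would handle the degree two factors: by Lemma~\ref{lem:5.1.3}, each $\det\!\left( \sum_{g \in G} \varphi_l(g) x_g \right)$ equals $\sum_{h \in \langle a \rangle} \chi_l'(h) A_h$. The squaring is accounted for by Lemma~\ref{lem:5.1.4}: since $\sum_{h} \chi_l'(h) A_h = \sum_{h} \overline{\chi_l'(h)} A_h = \sum_h \chi_{n-l}'(h) A_h$, the square $\left(\sum_h \chi_l'(h) A_h\right)^2$ equals the product of the two factors of $\prod_{\chi' \in \widehat{\langle a \rangle}}$ indexed by $\chi_l'$ and its conjugate $\chi_{n-l}'$. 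Thus, as $\varphi_l$ ranges over the degree two representations, the factors $\left(\det(\cdots)\right)^2$ collectively reproduce exactly the factors of $\prod_{\chi' \in \widehat{\langle a \rangle}} \sum_g \chi'(g) A_g$ indexed by those $\chi_l'$ with $\chi_l' \neq \overline{\chi_l'}$ (i.e.\ $l \neq 0$ and, when $n$ is even, $l \neq n/2$). It remains to match the degree one factors of Frobenius with the remaining factors of the circulant product, namely $\sum_g A_g$ (the $\chi_0'$ term) and, when $n$ is even, $\sum_g \chi_{n/2}'(g) A_g$.

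Next I would treat the degree one factors. When $n$ is odd there are exactly two of them, $\chi_1$ and $\chi_2$, and Lemma~\ref{lem:5.1.2} gives $\left(\sum_g \chi_1(g)x_g\right)\left(\sum_g \chi_2(g) x_g\right) = \sum_{h \in \langle a \rangle} A_h$, which is precisely the leftover $\chi_0'$ factor; combining with the previous paragraph completes the case. When $n$ is even there are four degree one representations $\chi_1, \chi_2, \chi_3, \chi_4$, and Lemma~\ref{lem:5.1.2} gives both $\left(\sum_g \chi_1(g)x_g\right)\left(\sum_g \chi_2(g)x_g\right) = \sum_h A_h$ and $\left(\sum_g \chi_3(g)x_g\right)\left(\sum_g \chi_4(g)x_g\right) = \sum_h \chi_{n/2}'(h) A_h$, matching exactly the two leftover factors $\chi_0'$ and $\chi_{n/2}'$. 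Multiplying everything together yields $\Theta(G) = \prod_{\chi' \in \widehat{\langle a \rangle}} \sum_{g \in \langle a \rangle} \chi'(g) A_g$.

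The bookkeeping obstacle — and the step I expect to be the most delicate — is the cardinality count that makes the matching a bijection: one must check that the number of degree two representations, each contributing a conjugate pair $\{\chi_l', \chi_{n-l}'\}$, together with the one or two degree one pairs, exhausts $\widehat{\langle a \rangle}$ with the right multiplicities, and this depends on the parity of $m$ (and on whether $G = D_m$ or $Q_m$, since $n = m$ in the dihedral case and $n = 2m$ in the quaternion case). The ranges $1 \le l \le \frac{m-1}{2}$, $1 \le l \le \frac{m}{2}-1$, and $1 \le l \le m-1$ from Section~4 are exactly what is needed here, so the proof reduces to a careful case analysis on these ranges. Everything else is a direct substitution using the three preceding lemmas.
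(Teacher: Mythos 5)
Your proposal is correct and follows essentially the same route as the paper's proof: apply Frobenius' theorem, convert the degree one factors via Lemma~\ref{lem:5.1.2} and the degree two factors via Lemma~\ref{lem:5.1.3}, then use Lemma~\ref{lem:5.1.4} to split each square into the conjugate pair $\{\chi_l', \chi_{|\langle a\rangle|-l}'\}$, with the same parity case analysis (the paper phrases it as ``$D_m$ with $m$ odd'' versus ``other cases,'' which matches your $n$ odd versus $n$ even split since $n=2m$ is always even for $Q_m$).
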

\begin{proof}
First, let $G = D_{m}$ and $m$ be odd. 
By Theorem $\ref{thm:2.1.1}$ and Lemma $\ref{lem:5.1.2}$, we have 
\begin{align*}
\Theta(G) 
&= \left( \sum_{g \in \langle a \rangle} A_{g} \right) \prod_{l = 1}^{\frac{m - 1}{2}} \left( \sum_{g \in \langle a \rangle} \chi_{l}'(g) A_{g} \right)^{2}. 
\end{align*}
From Lemma $\ref{lem:5.1.4}$, we have 
\begin{align*}
\Theta(G) 
&= \left( \sum_{g \in \langle a \rangle} A_{g} \right) 
\prod_{l = 1}^{\frac{m - 1}{2}} \left( \sum_{g \in \langle a \rangle} \chi_{l}'(g) A_{g} \right) 
\left( \sum_{g \in \langle a \rangle} \chi_{m - l}'(g) A_{g} \right) \\ 
&= \prod_{l = 0}^{m - 1} \sum_{g \in \langle a \rangle} \chi_{l}'(g) A_{g}. 
\end{align*}
In other case, by Theorem $\ref{thm:2.1.1}$ and Lemma $\ref{lem:5.1.2}$, we have 
\begin{align*}
\Theta(G) 
&= \left( \sum_{g \in \langle a \rangle} A_{g} \right)
\left( \sum_{g \in \langle a \rangle} \chi_{\frac{\vert \langle a \rangle \vert}{2}}'(g) A_{g} \right) 
\prod_{l = 1}^{\frac{\vert \langle a \rangle \vert}{2} - 1} \left( \sum_{g \in \langle a \rangle} \chi_{l}'(g) A_{g} \right)^{2}. 
\end{align*}
From Lemma $\ref{lem:5.1.4}$, we have 
\begin{eqnarray*}
\Theta(G) 
& = & \left( \sum_{g \in \langle a \rangle} A_{g} \right) 
\left( \sum_{g \in \langle a \rangle} \chi_{\frac{\vert \langle a \rangle \vert}{2}}'(g) A_{g} \right) \\ 
& & \prod_{l = 1}^{\frac{\vert \langle a \rangle \vert}{2} - 1} \left( \sum_{g \in \langle a \rangle} \chi_{l}'(g) A_{g} \right) 
\left( \sum_{g \in \langle a \rangle} \chi_{\vert \langle a \rangle \vert - l}'(g) A_{g} \right) \\ 
& = & \prod_{l = 0}^{\vert \langle a \rangle \vert - 1} \sum_{g \in \langle a \rangle} \chi_{l}'(g) A_{g}. 
\end{eqnarray*}
\end{proof}

\subsection{Main results for dihedral groups and generalized quaternion groups}
Let $G$ be $D_{m}$ or $Q_{m}$ and $\chi_{l} (1 \leq l \leq 4)$ be a degree one representation of Section 4. 
By definition, we have 
\begin{align*}
\alpha_{1} = \sum_{g \in G} x_{g} g, \quad 
\alpha_{2} = \sum_{g \in \langle a \rangle} \chi_{2}(g) x_{g^{-1}} g + \sum_{g \in G \setminus \langle a \rangle} \chi_{2}(g) x_{g} g, \\
\alpha_{3} = \sum_{g \in G} \chi_{3}(g) x_{g} g, \quad 
\alpha_{4} = \sum_{g \in \langle a \rangle} \chi_{4}(g) x_{g^{-1}} g + \sum_{g \in G \setminus \langle a \rangle} \chi_{4}(g) x_{g} g. 
\end{align*}

\begin{lem}\label{lem:5.2.1}
The following formula holds. 
$$
\alpha_{1} \alpha_{2} = \sum_{h \in \langle a \rangle} A_{h} h. 
$$
\end{lem}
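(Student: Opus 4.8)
The plan is to expand the product $\alpha_1\alpha_2$ directly in the group algebra, collect the coefficient of each group element, and recognize the result as $\sum_{h\in\langle a\rangle}A_h h$. Write $\langle a\rangle$ for the cyclic subgroup and recall from Lemma~\ref{lem:4.1.1}/\ref{lem:4.2.1} that $G=\langle a\rangle\sqcup\langle a\rangle b$. Since $\alpha_1=\sum_{g\in G}x_g g$ and $\alpha_2=\sum_{g\in\langle a\rangle}\chi_2(g)x_{g^{-1}}g+\sum_{g\in G\setminus\langle a\rangle}\chi_2(g)x_g g$, I would split each factor into its $\langle a\rangle$-part and its $\langle a\rangle b$-part, giving four partial products. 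Using $\chi_2(a^k)=1$ and $\chi_2(a^k b)=-1$, the four pieces carry signs $+,-,-,+$ respectively, so that one gets schematically
\begin{align*}
\alpha_1\alpha_2
&=\Bigl(\sum_{g\in\langle a\rangle}x_g g\Bigr)\Bigl(\sum_{g'\in\langle a\rangle}x_{g'^{-1}}g'\Bigr)
-\Bigl(\sum_{g\in\langle a\rangle}x_g g\Bigr)\Bigl(\sum_{g'\in\langle a\rangle}x_{g'b}g'b\Bigr)\\
&\quad-\Bigl(\sum_{g\in\langle a\rangle}x_{gb}gb\Bigr)\Bigl(\sum_{g'\in\langle a\rangle}x_{g'^{-1}}g'\Bigr)
+\Bigl(\sum_{g\in\langle a\rangle}x_{gb}gb\Bigr)\Bigl(\sum_{g'\in\langle a\rangle}x_{g'b}g'b\Bigr).
\end{align*}

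Next I would simplify each of the four products using the relations $b g=g^{-1}b$ for $g\in\langle a\rangle$, $b^2=e$ (for $D_m$) or $b^2=a^m$ (for $Q_m$), and closure of $\langle a\rangle$. The first and third products land in $\langle a\rangle$: in the first, substituting $h=gg'^{-1}$ (equivalently reindexing) produces $\sum_{h\in\langle a\rangle}\bigl(\sum_{g\in\langle a\rangle}x_g x_{hg}\bigr)h$ after a change of variable $g'\mapsto g$, $g\mapsto hg$; the third, after pushing the $b$'s through and using $b^2\in\langle a\rangle$, produces $\sum_{h\in\langle a\rangle}\bigl(\sum_{g\in\langle a\rangle}x_{gb}x_{hgb^{-1}}\bigr)h$ — note the contribution of $\delta(G)=b^2$ is absorbed exactly as in the proof of Lemma~\ref{lem:5.1.3}. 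The second and fourth products land in $\langle a\rangle b$; the claim will be that they cancel. For the fourth, $gb\cdot g'b=g(bg')b^{-1}b^2\cdot b^{-1}\cdots$ — more carefully $gb\cdot g'b=g g'^{-1}b^2$, which lies in $\langle a\rangle$, not $\langle a\rangle b$, so in fact I should recheck the parity bookkeeping: the correct statement is that the \emph{mixed} products (first$\times b$-part and $b$-part$\times$first) are the ones landing in $\langle a\rangle b$ and they must cancel, while the pure products both land in $\langle a\rangle$. I would redo the split with this corrected parity and verify the two $\langle a\rangle b$-valued terms are negatives of each other via the substitution $g'\mapsto$ (appropriate reindex) together with the sign from $\chi_2$.

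Assembling the two surviving $\langle a\rangle$-valued terms gives
$$
\alpha_1\alpha_2=\sum_{h\in\langle a\rangle}\Bigl(\sum_{g\in\langle a\rangle}\bigl(x_g x_{hg}-x_{gb}x_{hgb^{-1}}\bigr)\Bigr)h=\sum_{h\in\langle a\rangle}A_h h,
$$
by the definition of $A_h$. The main obstacle is precisely the index-chasing in the middle step: keeping track of which of the four partial products lies in $\langle a\rangle$ versus $\langle a\rangle b$, choosing the reindexings $g\mapsto hg$, $g\mapsto hgb^{-2}$ consistently with the earlier lemmas, and checking that the factor $\delta(G)\chi'_{l}(b^{-2})=1$ phenomenon from Lemma~\ref{lem:5.1.3} manifests here as the exact cancellation of the off-diagonal ($\langle a\rangle b$-valued) contributions for both $D_m$ and $Q_m$ uniformly. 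Once the parities and substitutions are fixed, the rest is the same routine computation already carried out in Lemma~\ref{lem:5.1.3}, so I expect no further difficulty.
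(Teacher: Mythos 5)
Your strategy --- expand $\alpha_{1}\alpha_{2}$ over the coset decomposition $G=\langle a\rangle\sqcup\langle a\rangle b$, reindex, and read off $A_{h}$ --- is essentially the paper's. (The paper keeps $\alpha_{1}$ whole, splits only $\alpha_{2}$, substitutes $h=g'g$, arrives at $\sum_{h\in G}A_{h}h$, and then discards $h\notin\langle a\rangle$ via Lemma~\ref{lem:5.1.1}(1); that last step is exactly your ``off-diagonal cancellation'' in disguise.) However, your sign bookkeeping contains a genuine error. Only $\alpha_{2}$ carries the character $\chi_{2}$; $\alpha_{1}=\sum_{g\in G}x_{g}g$ has no signs at all. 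Writing $A=\sum_{g\in\langle a\rangle}x_{g}g$, $B=\sum_{g\in\langle a\rangle}x_{gb}\,gb$, $C=\sum_{g\in\langle a\rangle}x_{g^{-1}}g$ and $D=\sum_{g\in\langle a\rangle}x_{gb}\,gb$, we have $\alpha_{1}=A+B$ and $\alpha_{2}=C-D$, so
$$
\alpha_{1}\alpha_{2}=AC-AD+BC-BD,
$$
i.e.\ the sign pattern is $+,-,+,-$, not the $+,-,-,+$ you assert (your pattern computes $(A-B)(C-D)$, because you have applied $\chi_{2}$ to the factor coming from $\alpha_{1}$ as well).

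This is not cosmetic. With your signs the two $\langle a\rangle$-valued (``pure'') terms are $AC+BD$, whose coefficient of $h$ is $\sum_{g}x_{g}x_{hg}+\sum_{g}x_{gb}x_{hgb^{-1}}$ --- the wrong relative sign to be $A_{h}$ --- and the two $\langle a\rangle b$-valued (``mixed'') terms are $-AD-BC$, which do not cancel. With the correct signs everything works: the coefficient of $h\in\langle a\rangle$ in $AC-BD$ is $\sum_{g}x_{g}x_{hg}-\sum_{g}x_{gb}x_{hgb^{-1}}=A_{h}$ (the reindexing $g\mapsto hgb^{\mp 2}$ handling $D_{m}$ and $Q_{m}$ uniformly, as you anticipate), and $-AD+BC$ vanishes because the coefficient of $a^{k}b$ in $BC$, namely $\sum_{g}x_{gb}x_{g^{-1}a^{k}}$, equals that in $AD$, namely $\sum_{g}x_{g}x_{g^{-1}a^{k}b}$, after the substitution $g\mapsto a^{k}g^{-1}$. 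You correctly catch your initial misassignment of which partial products land in which coset, but you never correct the signs, and your final displayed identity is consistent only with the $+,-,+,-$ pattern; the argument needs that correction carried through explicitly.
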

\begin{proof}
We have 
\begin{align}
\alpha_{1} \alpha_{2} 
&= \sum_{g' \in G} x_{g'} g' \sum_{g \in \langle a \rangle} \chi_{2}(g) x_{g^{-1}} x_{g^{-1}} g + 
\sum_{g' \in G} x_{g'} g' \sum_{g \in G \setminus \langle a \rangle} \chi_{2}(g) x_{g} g \nonumber \\ 
&= \sum_{g' \in G} \sum_{g \in \langle a \rangle} \chi_{2}(g) x_{g'} x_{g^{-1}} g' g + 
\sum_{g' \in G} \sum_{g \in G \setminus \langle a \rangle} \chi_{2}(g) x_{g'} x_{g} g' g \nonumber \\ 
&= \sum_{g' \in G} \sum_{g \in \langle a \rangle} x_{g'} x_{g^{-1}} g' g + 
\sum_{g' \in G} \sum_{g \in \langle a \rangle} \chi_{2}(g b) x_{g'} x_{g b} g' g b \nonumber \\ 
&= \sum_{g' \in G} \sum_{g \in \langle a \rangle} x_{g'} x_{g^{-1}} g' g - 
\sum_{g' \in G} \sum_{g \in \langle a \rangle} x_{g'} x_{g b} g' g b \nonumber \\ 
&= \sum_{g' \in G} \sum_{g \in \langle a \rangle} x_{g'} x_{g^{-1}} g' g - 
\sum_{g' \in G} \sum_{g \in \langle a \rangle} x_{g'} x_{g b} g' b g^{-1} \nonumber \\ 
&= \sum_{g' \in G} \sum_{g \in \langle a \rangle} x_{g'} x_{g^{-1}} g' g - 
\sum_{g' \in G} \sum_{g \in \langle a \rangle} x_{g' b^{-1}} x_{g b} g' g^{-1} \nonumber \\ 
&= \sum_{g' \in G} \sum_{g \in \langle a \rangle} x_{g'} x_{g^{-1}} g' g - 
\sum_{g' \in G} \sum_{g \in \langle a \rangle} x_{g' b^{-1}} x_{g^{-1} b} g' g. \label{eq:5.2.1}
\end{align}
We can make the change of variables $h = g' g$ to compute, then (\ref{eq:5.2.1}) equals 
\begin{align*}
&\sum_{h \in G} \sum_{g \in \langle a \rangle} x_{h g^{-1}} x_{g^{-1}} h - 
\sum_{h \in G} \sum_{g \in \langle a \rangle} x_{h g^{-1} b^{-1}} x_{g^{-1} b} h \\ 
= &\sum_{h \in G} \sum_{g \in \langle a \rangle} x_{h g} x_{g} h - 
\sum_{h \in G} \sum_{g \in \langle a \rangle} x_{h g b^{-1}} x_{g b} h \\ 
= &\sum_{h \in G} \sum_{g \in \langle a \rangle} (x_{g} x_{hg} - x_{g b} x_{h g b^{-1}}) h \\ 
= &\sum_{h \in G} A_{h} h \\ 
= &\sum_{h \in \langle a \rangle} A_{h} h
\end{align*}
as required. 
\end{proof}

\begin{lem}\label{lem:5.2.2}
The following formula holds. 
$$
\alpha_{3} \alpha_{4} = \sum_{h \in \langle a \rangle} \chi_{\frac{\vert \langle a \rangle \vert}{2}}'(h) A_{h} h. 
$$
\end{lem}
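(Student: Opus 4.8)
The plan is to mimic the computation in Lemma~\ref{lem:5.2.1} as closely as possible, the only difference being the extra twist by $\chi_{3}$ and $\chi_{4}$ rather than the trivial character and $\chi_{2}$. First I would write out $\alpha_{3}\alpha_{4}$ by splitting each factor over $\langle a\rangle$ and $G\setminus\langle a\rangle$, so that
\begin{align*}
\alpha_{3}\alpha_{4}
&= \sum_{g'\in G}\sum_{g\in\langle a\rangle}\chi_{3}(g')\chi_{4}(g)\,x_{g'}x_{g^{-1}}\,g'g
 + \sum_{g'\in G}\sum_{g\in\langle a\rangle}\chi_{3}(g'b)\chi_{4}(gb)\,x_{g'b}x_{gb}\,g'b\,gb .
\end{align*}
The next step is to simplify the scalar coefficients. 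Using the character tables of Section~4 one checks that on $\langle a\rangle$ one has $\chi_{3}(a^{k})=\chi_{4}(a^{k})=(-1)^{k}=\chi_{\vert\langle a\rangle\vert/2}'(a^{k})$, and that the ``$b$-part'' contributions $\chi_{3}(g'b)\chi_{4}(gb)$ again reduce to $\chi_{\vert\langle a\rangle\vert/2}'$ evaluated on the $\langle a\rangle$-components, up to the sign $\delta(G)\chi_{\vert\langle a\rangle\vert/2}'(b^{-2})=1$ that already appeared in Lemma~\ref{lem:5.1.3}. In particular the cross term $\chi_{3}(g')\chi_{4}(g)$ will, after collecting, only depend on $g'g$ through $\chi_{\vert\langle a\rangle\vert/2}'$, which is exactly what is needed to pull the character outside once we set $h=g'g$.

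After the scalar bookkeeping, the group-element manipulations are verbatim those of Lemma~\ref{lem:5.2.1}: use $bg=g^{-1}b$ and $b^{2}=e$ (resp.\ $b^{2}=a^{m}$, $b^{-3}=b$) to move the $b$'s past elements of $\langle a\rangle$, make the change of variables $h=g'g$, and recognize the resulting inner sum $\sum_{g\in\langle a\rangle}(x_{g}x_{hg}-x_{gb}x_{hgb^{-1}})$ as $A_{h}$. Then invoke Lemma~\ref{lem:5.1.1}(1) to discard the terms with $h\notin\langle a\rangle$, arriving at $\sum_{h\in\langle a\rangle}\chi_{\vert\langle a\rangle\vert/2}'(h)A_{h}h$.

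The main obstacle is the character bookkeeping for the off-diagonal term: one must verify carefully that $\chi_{3}(g')\chi_{4}(g)$, after the substitution $h=g'g$ with $g\in\langle a\rangle$, really equals $\chi_{\vert\langle a\rangle\vert/2}'(h)$ times something independent of the split of $h$ into $g'g$ — equivalently that $\chi_{3}$ and $\chi_{4}$ agree on $\langle a\rangle$ and that the values on the coset $\langle a\rangle b$ conspire with $\delta(G)$ to give the same clean factor. This is the only place where the even/odd and $D_{m}$ versus $Q_{m}$ cases could in principle behave differently, so I would treat the coefficient simplification as a short separate computation (essentially the identity $\delta(G)\chi_{l}'(b^{-2})=1$ from Lemma~\ref{lem:5.1.3} specialized to $l=\vert\langle a\rangle\vert/2$, together with the entries of the tables), and once that is in hand the rest of the proof is the same rearrangement already carried out in Lemma~\ref{lem:5.2.1}.
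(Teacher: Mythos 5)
Your proposal is correct and follows essentially the same route as the paper's proof: expand $\alpha_{3}\alpha_{4}$, split the second factor over $\langle a\rangle$ and its coset, reduce the character coefficients to $\chi_{3}(h)=\chi_{\vert\langle a\rangle\vert/2}'(h)$ via the table values, substitute $h=g'g$, recognize $A_{h}$, and discard $h\notin\langle a\rangle$ by Lemma~\ref{lem:5.1.1}(1). The only detail to watch is the one you already flag: the paper settles the coset sign with the identities $\chi_{3}(g)\chi_{4}(g^{-1})=1$ on $\langle a\rangle$ and $\chi_{3}(b^{-1})\chi_{4}(b)=-1$ (rather than $\delta(G)\chi_{l}'(b^{-2})=1$), after writing $g'\cdot gb=g'bg^{-1}$ and absorbing the $b$ into $g'$ by reindexing, so that no stray $b^{2}$ survives in the group element.
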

\begin{proof}
We have 
\begin{align*}
\alpha_{3} \alpha_{4} 
&= \sum_{g' \in G} \chi_{3}(g') x_{g'} g' \sum_{g \in \langle a \rangle} \chi_{4}(g) x_{g^{-1}} g + 
\sum_{g' \in G} \chi_{3}(g') x_{g'} g' \sum_{g \in G \setminus \langle a \rangle} \chi_{4}(g) x_{g} g \\ 
&= \sum_{g' \in G} \sum_{g \in \langle a \rangle} \chi_{3}(g') \chi_{4}(g) x_{g'} x_{g^{-1}} g' g + 
\sum_{g' \in G} \sum_{g \in G \setminus \langle a \rangle} \chi_{3}(g') \chi_{4}(g) x_{g'} x_{g} g' g. 
\end{align*}
If $g \in \langle a \rangle$, then, we have $\chi_{3}(g) \chi_{4}(g^{-1}) = 1$. 
We have 
\begin{align*}
\sum_{g' \in G} \sum_{g \in \langle a \rangle} \chi_{3}(g') \chi_{4}(g) x_{g'} x_{g^{-1}} g' g 
&= \sum_{h \in G} \sum_{g \in \langle a \rangle} \chi_{3}(h g^{-1}) \chi_{4}(g) x_{h g^{-1}} x_{g^{-1}} h \\ 
&= \sum_{h \in G} \sum_{g \in \langle a \rangle} \chi_{3}(h g) \chi_{4}(g^{-1}) x_{h g} x_{g} h \\ 
&= \sum_{h \in G} \sum_{g \in \langle a \rangle} \chi_{3}(h) x_{g} x_{h g} h. 
\end{align*}
We have 
\begin{align}
\sum_{g' \in G} \sum_{g \in G \setminus \langle a \rangle} \chi_{3}(g') \chi_{4}(g) x_{g'} x_{g} g' g 
&= \sum_{g' \in G} \sum_{g \in \langle a \rangle} \chi_{3}(g') \chi_{4}(g b) x_{g'} x_{g b} g' g b \nonumber \\ 
&= \chi_{4}(b) \sum_{g' \in G} \sum_{g \in \langle a \rangle} \chi_{3}(g') \chi_{4}(g) x_{g'} x_{g b} g' b g^{-1} \nonumber \\ 
&= \chi_{4}(b) \sum_{g' \in G} \sum_{g \in \langle a \rangle} \chi_{3}(g' b^{-1}) \chi_{4}(g) x_{g' b^{-1}} x_{g b} g' g^{-1}. \label{eq:5.2.2} 
\end{align}
It is easy that to see $\chi_{3}(b^{-1}) \chi_{4}(b) = -1$, then $(\ref{eq:5.2.2})$ equals 
\begin{align}
 &- \sum_{g' \in G} \sum_{g \in \langle a \rangle} \chi_{3}(g') \chi_{4}(g) x_{g' b^{-1}} x_{g b} g' g^{-1} \nonumber \\ 
= &- \sum_{h \in G} \sum_{g \in \langle a \rangle} \chi_{3}(h g^{-1}) \chi_{4}(g^{-1}) x_{h g^{-1} b^{-1}} x_{g^{-1} b} h \nonumber \\ 
= &- \sum_{h \in G} \sum_{g \in \langle a \rangle} \chi_{3}(h g) \chi_{4}(g) x_{h g b^{-1}} x_{g b} h. \label{eq:5.2.2.2}
\end{align}
If $g \in \langle a \rangle$, then we have $\chi_{3}(g) \chi_{4}(g) = 1$ and $(\ref{eq:5.2.2.2})$ equals 
\begin{align*}
 - \sum_{h \in G} \sum_{g \in \langle a \rangle} \chi_{3}(h) x_{h g b^{-1}} x_{g b} h. 
\end{align*}
We compute 
\begin{align*}
\alpha_{3} \alpha_{4} 
&= \sum_{h \in G} \chi_{3}(h) \sum_{g \in \langle a \rangle} (x_{g} x_{h g} - x_{g b} x_{h g b^{-1}}) h \\ 
&= \sum_{h \in G} \chi_{3}(h) A_{h} h \\ 
&= \sum_{h \in \langle a \rangle} \chi_{3}(h) A_{h} h \\
&= \sum_{h \in \langle a \rangle} \chi_{\frac{\vert \langle a \rangle \vert}{2}}'(h) A_{h} h
\end{align*}
as required. 
\end{proof}

Lemmas~$\ref{lem:5.2.1}$ and~$\ref{lem:5.2.2}$ are stronger than Lemma~$\ref{lem:5.1.2}$. 
In fact, Lemma~$\ref{lem:5.1.2}$ follows by applying $F$ to Lemmas~$\ref{lem:5.2.1}$ and~$\ref{lem:5.2.2}$. 

\begin{thm}\label{thm:5.2.3}
Let $G$ be $D_{m}$ or $Q_{m}$ and $e$ be the unit element of $G$. 
Then, we have 
$$
\Theta(G) e = \prod_{\chi' \in \widehat{\langle a \rangle}} \sum_{g \in \langle a \rangle} \chi'(g) x_{g} g. 
$$
\end{thm}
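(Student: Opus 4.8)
The plan is to deduce Theorem~\ref{thm:5.2.3} from the abelian case already established, Theorem~\ref{thm:3.2.1}, applied to the cyclic subgroup $\langle a \rangle$, together with the circulant formula for $\Theta(G)$ in Theorem~\ref{thm:5.1.5} (so that the right-hand side is the product of the homogeneous polynomials $A_g$, as in Theorem~\ref{thm:1.1.5}). The guiding observation is that Theorem~\ref{thm:3.2.1} is, coefficientwise, an identity of polynomials in the variables indexed by the group, so it is preserved under any substitution of those variables by elements of a commutative $\mathbb{C}$-algebra; here the substitution will replace the variables of $\langle a \rangle$ by the polynomials $A_g$, $g \in \langle a \rangle$.

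First I would record that $\langle a \rangle$ is a finite cyclic group, of order $m$ when $G = D_m$ and of order $2m$ when $G = Q_m$, so Theorem~\ref{thm:3.2.1} applies to it: for independent variables $\{ y_g : g \in \langle a \rangle \}$,
\[
\Theta(\langle a \rangle)\, e \;=\; \prod_{\chi' \in \widehat{\langle a \rangle}} \sum_{g \in \langle a \rangle} \chi'(g)\, y_g\, g ,
\]
where $\Theta(\langle a \rangle)$ is the (circulant) group determinant of $\langle a \rangle$ in the $y_g$. Reading off the coefficient of each element of $\langle a \rangle$, this is a family of polynomial identities in the $y_g$ over $\mathbb{C}$; applying the $\mathbb{C}$-algebra homomorphism $y_g \mapsto A_g$ (each $A_g \in \mathbb{C}[x_h : h \in G]$) gives
\[
\Theta(\langle a \rangle)\big|_{y_g = A_g} \cdot e \;=\; \prod_{\chi' \in \widehat{\langle a \rangle}} \sum_{g \in \langle a \rangle} \chi'(g)\, A_g\, g ,
\]
where the factors on the right lie in $\mathbb{C}G$ and pairwise commute (as $\langle a \rangle$ is abelian), so the product is unambiguous.

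It then remains to identify $\Theta(\langle a \rangle)\big|_{y_g = A_g}$ with $\Theta(G)$. By the abelian case of Frobenius' theorem (Theorem~\ref{thm:2.1.1}) applied to $\langle a \rangle$ one has $\Theta(\langle a \rangle) = \prod_{\chi' \in \widehat{\langle a \rangle}} \sum_{g \in \langle a \rangle} \chi'(g) y_g$; substituting $y_g \mapsto A_g$ turns the right-hand side into $\prod_{\chi' \in \widehat{\langle a \rangle}} \sum_{g \in \langle a \rangle} \chi'(g) A_g$, which is exactly $\Theta(G)$ by Theorem~\ref{thm:5.1.5}. Feeding this into the previous display yields $\Theta(G)\, e = \prod_{\chi' \in \widehat{\langle a \rangle}} \sum_{g \in \langle a \rangle} \chi'(g) A_g g$, the desired identity. (Alternatively, one may simply rerun the Fourier-transform argument of Theorem~\ref{thm:3.2.1} verbatim for the cyclic group $\langle a \rangle$ with the $A_g$ in the role of the variables; it is the same computation.)

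The only delicate point, and hence the main obstacle such as it is, is the specialization step: the proof of Theorem~\ref{thm:3.2.1} passes through the Fourier transform over $\mathbb{C}$, so one must explicitly observe that its conclusion is an identity of polynomials in the group variables — with coefficients in $\mathbb{C}$, indeed already over $\mathbb{Z}$ adjoined a suitable root of unity — and that such an identity is preserved by the ring map $y_g \mapsto A_g$. Once this is granted, the cyclicity of $\langle a \rangle$, the commutativity of the factors, and the invocation of Theorem~\ref{thm:5.1.5} are all routine, and no separate case analysis between $D_m$ and $Q_m$, or between $m$ even and odd, is needed since it has been absorbed into Theorem~\ref{thm:5.1.5}.
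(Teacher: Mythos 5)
Your proposal is correct and follows essentially the same route as the paper: apply Theorem~\ref{thm:3.2.1} to the cyclic group $\langle a \rangle$ to see that the product $\prod_{\chi'} \sum_{g} \chi'(g) A_{g}\, g$ is a scalar multiple $Ce$ of the identity, then identify $C$ with $\Theta(G)$ via Theorem~\ref{thm:5.1.5}. You are in fact somewhat more careful than the paper on the one genuinely delicate point — that the group-algebra identity of Theorem~\ref{thm:3.2.1} is a coefficientwise polynomial identity and hence survives the specialization $y_{g} \mapsto A_{g}$ — and you correctly read the right-hand side with $A_{g}$ in place of $x_{g}$, as the statement of Theorem~\ref{thm:1.1.5} and the paper's own proof require.
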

\begin{proof}
The group $\langle a \rangle$ is a finite abelian group and by Theorem $\ref{thm:3.2.1}$, there exists a $C$ such that 
$$
Ce = \prod_{\chi' \in \widehat{\langle a \rangle}} \sum_{g \in \langle a \rangle} \chi'(g) x_{g} g. 
$$
By Theorem $\ref{thm:5.1.5}$ and the mapping $F$, we have 
$$
C = \Theta(G). 
$$
This completes the proof. 
\end{proof}

Theorem~$\ref{thm:2.1.1}$ for $D_{m}$ or $Q_{m}$ and Theorem~$\ref{thm:5.1.5}$ follow by applying $F$ to Theorem~$\ref{thm:5.2.3}$. 
Moreover, we obtain the following formula for inverse elements in the group algebra for $D_{m}$ and $Q_{m}$. 

\begin{cor}\label{cor:5.2.4}
Let $G$ be $D_{m}$ or $Q_{m}$. If $\Theta(G) \neq 0$, 
\begin{enumerate}
\item When $G = D_{m}$ and $m$ is odd, we have
$$
\alpha_{1}^{-1} = \frac{1}{\Theta(G)} \alpha_{2} \prod_{\chi' \in \widehat{\langle a \rangle} \setminus \{ \chi'_{0} \}} \sum_{g \in \langle a \rangle} \chi'(g) x_{g} g. 
$$
\item In other cases, we have 
$$
\alpha_{1}^{-1} = \frac{1}{\Theta(G)} \alpha_{2} \alpha_{3} \alpha_{4} \prod_{\chi' \in \widehat{\langle a \rangle} \setminus \{ \chi'_{0} \}} \sum_{g \in \langle a \rangle} \chi'(g) x_{g} g. 
$$
\end{enumerate}
\end{cor}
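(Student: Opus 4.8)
The plan is to exhibit $\alpha_1$ as a left factor of $\Theta(G) e$ inside the group algebra $\mathbb{C} G$, cancel the nonzero scalar $\Theta(G)$, and then promote the resulting one-sided inverse to a genuine inverse. This is the exact analogue of the passage from Theorem~\ref{thm:3.2.1} to Corollary~\ref{cor:3.2.2} in the abelian case, and of the passage from Theorem~\ref{thm:1.1.5} to Corollary~\ref{cor:1.1.6} for $D_3$; the extra work here is only that $\mathbb{C} G$ is noncommutative and that the case distinction of Theorem~\ref{thm:1.1.7} (whether $\chi_3,\chi_4$ exist) intervenes.

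First I would invoke Theorem~\ref{thm:5.2.3}, which expresses $\Theta(G) e$ as a product, over the characters $\chi' \in \widehat{\langle a \rangle}$, of the elements $\sum_{g \in \langle a \rangle} \chi'(g) A_g g$; each such factor lies in the commutative subalgebra $\mathbb{C} \langle a \rangle \subseteq \mathbb{C} G$, so the factors commute and may be reordered freely. Let $\chi'_0$ denote the trivial representation of $\langle a \rangle$. By Lemma~\ref{lem:5.2.1}, the factor indexed by $\chi'_0$ is $\sum_{h \in \langle a \rangle} A_h h = \alpha_1 \alpha_2$. If $|\langle a \rangle|$ is even, which is precisely every case except $G = D_m$ with $m$ odd, then Lemma~\ref{lem:5.2.2} identifies the factor indexed by the order-two character $\chi'_{|\langle a \rangle|/2}$ with $\alpha_3 \alpha_4$.

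Next I would substitute these identifications. In case (1) the factorization becomes $\Theta(G) e = (\alpha_1 \alpha_2)\,\gamma$, where $\gamma$ is the product of the factors indexed by the remaining characters; in case (2), after moving the two distinguished factors to the front, it becomes $\Theta(G) e = (\alpha_1 \alpha_2)(\alpha_3 \alpha_4)\,\gamma'$, where $\gamma'$ is the product of the factors indexed by the remaining characters. Dividing by $\Theta(G) \neq 0$ and using associativity gives $\alpha_1 \beta = e$, where $\beta = \frac{1}{\Theta(G)} \alpha_2 \gamma$ in case (1) and $\beta = \frac{1}{\Theta(G)} \alpha_2 \alpha_3 \alpha_4 \gamma'$ in case (2); in each case $\beta$ is exactly the element on the right-hand side of the corollary. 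Finally, since $\mathbb{C} G$ is a finite-dimensional $\mathbb{C}$-vector space, the $\mathbb{C}$-linear map "left multiplication by $\alpha_1$" is surjective (its image contains $e$, hence equals $\mathbb{C} G$) and therefore bijective; one then checks that $\alpha_1(\beta\alpha_1) = (\alpha_1\beta)\alpha_1 = \alpha_1 e$ forces $\beta\alpha_1 = e$, so $\alpha_1$ is a unit with $\alpha_1^{-1} = \beta$.

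The step I expect to be most delicate is the bookkeeping in the case distinction rather than any genuine difficulty. For $D_m$ with $m$ odd there are only the two degree-one representations $\chi_1,\chi_2$, so $\alpha_3,\alpha_4$ do not exist and only the $\chi'_0$-factor is absorbed; in all remaining cases $|\langle a \rangle|$ is even, the character $\chi'_{|\langle a \rangle|/2}$ is present, and both the $\chi'_0$- and the $\chi'_{|\langle a \rangle|/2}$-factors must be pulled out and replaced by $\alpha_1\alpha_2$ and $\alpha_3\alpha_4$. A secondary point worth flagging is that although $\alpha_3$ and $\alpha_4$ do not individually lie in $\mathbb{C}\langle a \rangle$, their product does by Lemma~\ref{lem:5.2.2}, so reordering the factors of the Theorem~\ref{thm:5.2.3} product is legitimate; and the promotion of a right inverse to a two-sided inverse, while elementary, should be recorded explicitly since $\mathbb{C} G$ is noncommutative.
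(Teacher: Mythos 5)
Your argument is correct and is precisely the (unwritten) argument the paper intends: the corollary appears immediately after Theorem~\ref{thm:5.2.3} with no proof of its own, and your route --- reading the factors as $\sum_{g\in\langle a\rangle}\chi'(g)A_{g}g$, reordering them inside the commutative subalgebra spanned by $\langle a\rangle$, replacing the $\chi'_{0}$-factor by $\alpha_{1}\alpha_{2}$ via Lemma~\ref{lem:5.2.1} and, when $|\langle a\rangle|$ is even, the $\chi'_{|\langle a\rangle|/2}$-factor by $\alpha_{3}\alpha_{4}$ via Lemma~\ref{lem:5.2.2}, dividing by $\Theta(G)\neq 0$, and upgrading the resulting right inverse to a two-sided inverse by finite-dimensionality --- is the natural one. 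The one thing you should not pass over silently is that in case (2) the element you actually construct carries the product over $\widehat{\langle a\rangle}\setminus\{\chi'_{0},\chi'_{|\langle a\rangle|/2}\}$, whereas the corollary as printed takes it over $\widehat{\langle a\rangle}\setminus\{\chi'_{0}\}$; a degree count (the numerator must have degree $|G|-1=2|\langle a\rangle|-1$, while $\alpha_{2}\alpha_{3}\alpha_{4}$ times $|\langle a\rangle|-1$ quadratic factors has degree $2|\langle a\rangle|+1$) shows the printed index set cannot be right, so you have proved the corrected statement, but your assertion that your $\beta$ coincides with the displayed right-hand side is not literally true and the discrepancy deserves an explicit remark. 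The same charitable correction replaces $x_{g}$ by $A_{g}$ in the displayed products, consistently with Corollary~\ref{cor:1.1.6}.
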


Note that the group algebra is non-commutative, so the order of the factors is important.

\section{\bf{Algebraic properties of irreducible factors corresponding to degree one representations}}
In this section, we follow the commutators of irreducible factors of the factorization of the group determinant on the group algebra for $D_{m}$ and $Q_{m}$ 
corresponding to degree one representations have interesting algebraic properties. 
From this result, we know that degree one representations from natural pairing. 

\subsection{Transformations of irreducible factors corresponding to degree one representations}

In this subsection, we obtain transformations of irreducible factors corresponding to degree one representations. 
We need it to prove for algebraic properties of irreducible factors corresponding to degree one representations. 

\begin{lem}\label{lem:6.1.1}
Let $G$ be $D_{m}$ or $Q_{m}$. 
Under the change of variable 
$$
x_{g} \mapsto \begin{cases} 
\chi_{2}(g) x_{g^{-1}} & {\rm if} \quad g \in \langle a \rangle, \\ 
\chi_{2}(g) x_{g} & \rm{otherwise}. 
\end{cases}
$$
Then $\alpha_{1}$ becomes $\alpha_{2}$ by and vice versa, 
the same holds for $\alpha_{3}$ and $\alpha_{4}$. 
\end{lem}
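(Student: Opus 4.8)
The plan is to verify the claim by direct substitution into the explicit formulas for $\alpha_1,\dots,\alpha_4$ given just before Lemma~\ref{lem:5.2.1}, exploiting the fact that the substitution $\sigma$ defined in the statement is an involution on the variable set $\{x_g : g\in G\}$. First I would record that $\sigma$ is an involution: on $\langle a\rangle$ it sends $x_g\mapsto \chi_2(g)x_{g^{-1}}$, and applying it twice gives $\chi_2(g)\chi_2(g^{-1})x_g = x_g$ since $\chi_2$ is a degree one representation with $\chi_2(g)=1$ for $g\in\langle a\rangle$ (read off from the character tables in Section~4); on $G\setminus\langle a\rangle$ it sends $x_g\mapsto\chi_2(g)x_g$, and since $\chi_2(g)^2=1$ there as well, $\sigma^2=\mathrm{id}$. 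Hence "and vice versa" will be immediate once the forward direction is shown.

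Next I would compute $\sigma(\alpha_1)$. Writing $\alpha_1=\sum_{g\in\langle a\rangle}x_g\,g+\sum_{g\in G\setminus\langle a\rangle}x_g\,g$ and applying $\sigma$ to the coefficients only (the group elements $g$ are untouched), the first sum becomes $\sum_{g\in\langle a\rangle}\chi_2(g)x_{g^{-1}}\,g$ and the second becomes $\sum_{g\in G\setminus\langle a\rangle}\chi_2(g)x_g\,g$, which is exactly the defining expression for $\alpha_2$. Conversely, applying $\sigma$ to $\alpha_2$: on the $\langle a\rangle$-part the coefficient $\chi_2(g)x_{g^{-1}}$ maps to $\chi_2(g)\cdot\chi_2(g^{-1})x_{(g^{-1})^{-1}}=x_g$ (again using $\chi_2|_{\langle a\rangle}\equiv 1$), recovering $\sum_{g\in\langle a\rangle}x_g\,g$, and on the outside part $\chi_2(g)x_g\mapsto\chi_2(g)^2 x_g=x_g$, recovering $\sum_{g\in G\setminus\langle a\rangle}x_g\,g$; so $\sigma(\alpha_2)=\alpha_1$.

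For $\alpha_3$ and $\alpha_4$ the argument is structurally identical but one must track the interaction between $\chi_2$ (appearing in $\sigma$) and $\chi_3,\chi_4$ (appearing in the $\alpha$'s). The key identity is that $\chi_4=\chi_2\chi_3$ as degree one representations — this is visible from the character tables: $\chi_3(b)=1,\chi_4(b)=-1,\chi_2(b)=-1$ in the $D_m$-even and $Q_m$ cases, and on $\langle a\rangle$ all of $\chi_2,\chi_3,\chi_4$ restrict compatibly so that $\chi_3\chi_2=\chi_4$ throughout $G$. Granting this, $\sigma(\alpha_3)=\sum_{g\in\langle a\rangle}\chi_3(g)\chi_2(g)x_{g^{-1}}\,g+\sum_{g\in G\setminus\langle a\rangle}\chi_3(g)\chi_2(g)x_g\,g=\sum_{g\in\langle a\rangle}\chi_4(g)x_{g^{-1}}\,g+\sum_{g\in G\setminus\langle a\rangle}\chi_4(g)x_g\,g=\alpha_4$, and the reverse direction follows by the involution property together with $\chi_4\chi_2=\chi_3$.

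The main obstacle — really the only nontrivial point — is confirming the relations $\chi_2|_{\langle a\rangle}\equiv 1$ and $\chi_4=\chi_2\chi_3$ on all of $G$ uniformly across the four cases ($D_m$ odd, $D_m$ even, $Q_m$ odd, $Q_m$ even), and checking that these are exactly what make the sign bookkeeping in $\sigma^2$ and in $\sigma(\alpha_3)$ close up; everything else is relabeling summation indices. I would dispatch this by a short case-by-case glance at the tables of Section~4, noting that when $m$ is odd for $D_m$ only $\chi_1,\chi_2$ exist so the $\alpha_3,\alpha_4$ statement is vacuous there, consistent with the exclusion already flagged in Theorem~\ref{thm:1.1.7}.
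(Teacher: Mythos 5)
Your proof is correct and follows essentially the same route as the paper's: direct substitution into the explicit formulas for $\alpha_{1},\dots,\alpha_{4}$, using $\chi_{2}|_{\langle a\rangle}\equiv 1$, $\chi_{2}|_{G\setminus\langle a\rangle}\equiv -1$ and $\chi_{4}=\chi_{2}\chi_{3}$ (the paper simply checks all four substitutions by hand where you invoke the involution property for the reverse directions, a cosmetic difference). One trivial slip: for $Q_{m}$ with $m$ odd the tables give $\chi_{3}(b)=i$, not $1$, but the identity $\chi_{4}=\chi_{2}\chi_{3}$ that your argument actually uses still holds in that case.
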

\begin{proof}
From the definition of $\alpha_{1}$, 
the factor $\alpha_{1}$ becomes $\alpha_{3}$ under the change of variable. 
The factor $\alpha_{2}$ becomes 
\begin{align*}
\sum_{g \in \langle a \rangle} \chi_{2}(g) \chi_{2}(g^{-1}) x_{g} g + \sum_{g \in G \setminus \langle a \rangle} \chi_{2}(g) \chi_{2}(g) x_{g} g 
&= \sum_{g \in \langle a \rangle} x_{g} g + \chi_{2}(b)^{2} \sum_{g \in G \setminus \langle a \rangle} x_{g} g \\ 
&= \sum_{g \in \langle a \rangle} x_{g} g + \sum_{g \in G \setminus \langle a \rangle} x_{g} g \\ 
&= \sum_{g \in G} x_{g} g \\ 
&= \alpha_{1}. 
\end{align*}
The factor $\alpha_{3}$ becomes 
\begin{align*}
& \sum_{g \in \langle a \rangle} \chi_{3}(g) \chi_{2}(g) x_{g^{-1}} g + \sum_{g \in G \setminus \langle a \rangle} \chi_{3}(g) \chi_{2}(g) x_{g} g \\ 
&\quad = \sum_{g \in \langle a \rangle} \chi_{3}(g) x_{g^{-1}} g - \sum_{g \in G \setminus \langle a \rangle} \chi_{3}(g) x_{g} g \\ 
&\quad = \sum_{g \in \langle a \rangle} \chi_{4}(g) x_{g^{-1}} g + \sum_{g \in G \setminus \langle a \rangle} \chi_{4}(g) x_{g} g\\ 
&\quad = \alpha_{4}. 
\end{align*}
The factor $\alpha_{4}$ becomes 
\begin{align*}
& \sum_{g \in \langle a \rangle} \chi_{4}(g) \chi_{2}(g^-1{}) x_{g} g + \sum_{g \in G \setminus \langle a \rangle} \chi_{4}(g) \chi_{2}(g) x_{g} g \\ 
&\quad = \sum_{g \in \langle a \rangle} \chi_{4}(g) x_{g} g - \sum_{g \in G \setminus \langle a \rangle} \chi_{4}(g) x_{g} g \\ 
&\quad = \sum_{g \in \langle a \rangle} \chi_{3}(g) x_{g} g + \sum_{g \in G \setminus \langle a \rangle} \chi_{3}(g) x_{g} g \\ 
&\quad = \alpha_{3}. 
\end{align*}
This completes the proof. 
\end{proof}

\begin{lem}\label{lem:6.1.2}
Let $G$ be $D_{m}$ or $Q_{m}$. 
Under the change of variable $x_{g} \mapsto \chi_{3}(g) x_{g}$, 
the factor $\alpha_{1}$ becomes $\alpha_{3}$, 
the factor $\alpha_{2}$ becomes $\alpha_{4}$, and 
$\alpha_{3}$ becomes 
$$
\sum_{g \in \langle a \rangle} x_{g} g - \sum_{g \in G \setminus \langle a \rangle} x_{g} g 
$$
in the case that $m$ is odd and $\alpha_{1}$ otherwise. 
The factor $\alpha_{4}$ becomes 
$$
\sum_{g \in \langle a \rangle} x_{g^{-1}} g + \sum_{g \in G \setminus \langle a \rangle} x_{g} g
$$
in the case that $m$ is odd and $\alpha_{2}$ otherwise. 
Namely, 
the factor $\alpha_{3} + \alpha_{4}$ becomes $\alpha_{1} + \alpha_{2}$. 
\end{lem}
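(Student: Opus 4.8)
The plan is a direct substitution-and-simplify argument: apply the map $x_g \mapsto \chi_3(g) x_g$ to each of the four elements $\alpha_1,\alpha_2,\alpha_3,\alpha_4$ exactly as they are written in Section 5.2, and reduce the resulting coefficients using three elementary facts read off the character lists of Section 4. First, $\chi_4 = \chi_2\chi_3$ as functions on $G$. Second, $\chi_3$ restricts to a homomorphism into $\{\pm1\}$ on $\langle a\rangle$, so $\chi_3(g^{-1}) = \chi_3(g)$ and $\chi_3(g)^2 = 1$ there. Third, on $G \setminus \langle a\rangle$ one has $\chi_2(g) = -1$, and $\chi_3(g)^2 = 1$ when $m$ is even but $\chi_3(g)^2 = -1$ when $m$ is odd (there $\chi_3(b)$ is a primitive fourth root of unity); consequently $\chi_4(g)\chi_3(g) = \chi_2(g)\chi_3(g)^2$ behaves differently in the two parities.

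The computation for $\alpha_1$ is immediate: $\alpha_1 = \sum_{g \in G} x_g g \mapsto \sum_{g \in G}\chi_3(g) x_g g = \alpha_3$ by definition. For $\alpha_2$ I split the defining sum into its $\langle a\rangle$-part and its $(G \setminus \langle a\rangle)$-part. In the first part the variable subscript is $g^{-1}$, so the substitution multiplies it by $\chi_3(g^{-1}) = \chi_3(g)$; together with $\chi_2(g)\chi_3(g) = \chi_4(g)$ this gives $\sum_{g \in \langle a\rangle}\chi_4(g) x_{g^{-1}} g$, and the second part becomes $\sum_{g \in G \setminus \langle a\rangle}\chi_4(g) x_g g$ in the same way, so $\alpha_2 \mapsto \alpha_4$. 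The claim that $\alpha_4 \mapsto \alpha_2$ in the non-degenerate cases is the reverse computation, now using that $\chi_4(g)\chi_3(g) = \chi_2(g)\chi_3(g)^2$ equals $\chi_2(g)$ once $\chi_3(g)^2 = 1$. For $\alpha_3$, the substitution yields $\sum_{g \in G}\chi_3(g)^2 x_g g$, which equals $\alpha_1$ when $m$ is even and $\sum_{g \in \langle a\rangle} x_g g - \sum_{g \in G \setminus \langle a\rangle} x_g g$ when $m$ is odd, by the sign of $\chi_3(g)^2$ on the two pieces. For $\alpha_4$ I split again: the $\langle a\rangle$-part becomes $\sum_{g \in \langle a\rangle}\chi_4(g)\chi_3(g^{-1}) x_{g^{-1}} g = \sum_{g \in \langle a\rangle} x_{g^{-1}} g$ because $\chi_4\chi_3 = \chi_2\chi_3^2$ is trivial on $\langle a\rangle$, and the $(G \setminus \langle a\rangle)$-part becomes $\sum_{g \in G \setminus \langle a\rangle}\chi_2(g)\chi_3(g)^2 x_g g$, whose coefficient is $-1$ for $m$ even (recovering $\alpha_2$) and $+1$ for $m$ odd (recovering $\sum_{g \in \langle a\rangle} x_{g^{-1}} g + \sum_{g \in G \setminus \langle a\rangle} x_g g$). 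Adding the outputs for $\alpha_3$ and $\alpha_4$ then gives $\alpha_1 + \alpha_2$ directly when $m$ is even; when $m$ is odd the two anomalous $\pm\sum_{g \in G \setminus \langle a\rangle} x_g g$ terms cancel and $\sum_{g \in \langle a\rangle}(x_g + x_{g^{-1}}) g$ is exactly $\alpha_1 + \alpha_2$ restricted to $\langle a\rangle$, so the final "Namely" holds in every case.

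Since the argument is pure bookkeeping, the only points requiring care are two. The substitution acts on variable subscripts, so in $\alpha_2$ and $\alpha_4$ it produces $\chi_3(g^{-1})$, not $\chi_3(g)$; these agree precisely because $\chi_3$ is $\{\pm1\}$-valued on $\langle a\rangle$, which is the one place the restriction to $\langle a\rangle$ enters. And one must fix the meaning of $\chi_3,\chi_4$ for $D_m$ with $m$ odd, since Section 4 provides the degree one representations $\chi_3,\chi_4$ only for $D_m$ with $m$ even and for $Q_m$: consistency with the stated conclusion forces $\chi_3(b)^2 = -1$ there, so one works with the (necessarily non-multiplicative) extension in which $\chi_3$ is trivial on $\langle a\rangle$ and $\chi_3(b)$ is a primitive fourth root of unity — this is exactly why the "$m$ odd" clause is needed and why $\alpha_3,\alpha_4$ fail to be genuine irreducible factors in that case.
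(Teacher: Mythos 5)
Your proposal is correct and takes essentially the same route as the paper's own proof: a direct substitution into each $\alpha_i$ as defined in Section 5.2, simplified via the identities $\chi_{4}=\chi_{2}\chi_{3}$, $\chi_{3}(g^{-1})=\chi_{3}(g)$ on $\langle a\rangle$, and the parity-dependent value of $\chi_{3}(b)^{2}$. (Incidentally, your case split for the image of $\alpha_{4}$ agrees with the lemma as stated, whereas the paper's own proof transposes the words ``odd'' and ``even'' in that final step --- a typo on the paper's side; and your remark about the undefined $\chi_{3},\chi_{4}$ for $D_{m}$ with $m$ odd correctly identifies a genuine ambiguity the paper leaves implicit.)
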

\begin{proof}
In the definition of $\alpha_{1}$, we obtain $\alpha_{3}$. 
The factor $\alpha_{2}$ becomes 
\begin{align*}
& \sum_{g \in \langle a \rangle} \chi_{2}(g) \chi_{3}(g^{-1}) x_{g^{-1}} g + \sum_{g \in G \setminus \langle a \rangle} \chi_{2}(g) \chi_{3}(g) x_{g} g \\ 
&\quad = \sum_{g \in \langle a \rangle} \chi_{3}(g) x_{g^{-1}} g - \sum_{g \in G \setminus \langle a \rangle} \chi_{3}(g) x_{g} g \\ 
&\quad = \sum_{g \in \langle a \rangle} \chi_{4}(g) x_{g^{-1}} g + \sum_{g \in G \setminus \langle a \rangle} \chi_{4}(g) x_{g} g \\ 
&\quad = \alpha_{4}. 
\end{align*}
The factor $\alpha_{3}$ becomes 
\begin{align*}
\sum_{g \in G} \chi_{3}(g)^{2} x_{g} g 
&= \sum_{g \in \langle a \rangle} x_{g} g + \chi_{3}(b)^{2} \sum_{g \in G \setminus \langle a \rangle} x_{g} g. 
\end{align*}
When $m$ is even, right hand side is equal to $\alpha_{1}$, otherwise is equal to 
$$
\sum_{g \in \langle a \rangle} x_{g} g - \sum_{g \in G \setminus \langle a \rangle} x_{g} g. 
$$
The factor $\alpha_{4}$ becomes 
\begin{align*}
& \sum_{g \in \langle a \rangle} \chi_{4}(g) \chi_{3}(g^{-1}) x_{g^{-1}} g + \sum_{g \in G \setminus \langle a \rangle} \chi_{4}(g) \chi_{3}(g) x_{g} g \\ 
& \quad = \sum_{g \in \langle a \rangle} x_{g^{-1}} g - \chi_{3}(b)^{2} \sum_{g \in G \setminus \langle a \rangle} x_{g} g, 
\end{align*}
When $m$ is odd, right hand side is equal to $\alpha_{2}$, otherwise is equal to
$$
\sum_{g \in \langle a \rangle} x_{g^{-1}} g + \sum_{g \in G \setminus \langle a \rangle} x_{g} g. 
$$
This completes the proof. 
\end{proof}

\subsection{Algebraic properties of irreducible factors corresponding to degree one representations}
In this subsection, we follow the commutators of irreducible factors of the factorization of the group determinant on the group algebra for $D_{m}$ and $Q_{m}$ corresponding to degree one representations have interesting algebraic properties. 
From this result, we know that degree one representations form natural pairing. 

\begin{lem}\label{lem:6.2.1}
If $h \in \langle a \rangle$, then we have 
$$
\sum_{g \in \langle a \rangle} x_{g b} x_{b^{-1} g^{-1} h} = \sum_{g \in \langle a \rangle} x_{g b} x_{h g b^{-1}}. 
$$
\end{lem}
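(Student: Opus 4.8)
The plan is to transform the left-hand side into the right-hand side by a single change of summation variable in $\langle a \rangle$, after first rewriting the second subscript using the defining relation $b^{-1}ab=a^{-1}$.

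First I would record the two elementary facts about $G=D_m$ or $Q_m$ that the argument rests on. Since $b^{-1}ab=a^{-1}$ we have $b^{-1}kb=k^{-1}$, hence $b^{-1}k=k^{-1}b^{-1}$, for every $k\in\langle a\rangle$; and $b^{-2}\in\langle a\rangle$ (it equals $e$ for $D_m$ and $a^{-m}$ for $Q_m$), so that $b^{4}=e$ and in particular $b^{-3}=b$. Applying the first fact with $k=g^{-1}h\in\langle a\rangle$ gives
$$b^{-1}g^{-1}h=(g^{-1}h)^{-1}b^{-1}=h^{-1}gb^{-1},$$
so that the left-hand side equals $\sum_{g\in\langle a\rangle}x_{gb}\,x_{h^{-1}gb^{-1}}$.

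Next I would apply the substitution $g\mapsto hgb^{-2}$, which by the second fact is a bijection of $\langle a\rangle$ onto itself. It sends $x_{gb}$ to $x_{hgb^{-2}b}=x_{hgb^{-1}}$ and $x_{h^{-1}gb^{-1}}$ to $x_{h^{-1}hgb^{-2}b^{-1}}=x_{gb^{-3}}=x_{gb}$, the last step using $b^{-3}=b$. Hence the sum becomes $\sum_{g\in\langle a\rangle}x_{hgb^{-1}}\,x_{gb}$, which is the right-hand side once the commuting variables are reordered. Equivalently, one can perform this in two stages, $g\mapsto hg$ followed by $g\mapsto gb^{-2}$.

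I do not expect a genuine obstacle here. The only points requiring care are that the substitution must be checked to map $\langle a\rangle$ into $\langle a\rangle$ — which is precisely where $b^{-2}\in\langle a\rangle$ is used — and that the simplification $b^{-3}=b$ be justified uniformly for both the dihedral and generalized quaternion cases; both are immediate from the presentations recalled in Section~4.
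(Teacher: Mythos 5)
Your proposal is correct and follows essentially the same route as the paper: rewrite $b^{-1}g^{-1}h$ as $h^{-1}gb^{-1}$ using $b^{-1}ab=a^{-1}$, then apply the substitution $g\mapsto hgb^{-2}$ and simplify with $b^{-3}=b$. The paper's proof is the same computation (written with $b^{-1}=b^{3}$ instead of $b^{-3}=b$), and your explicit remarks that $b^{-2}\in\langle a\rangle$ makes the substitution a bijection of $\langle a\rangle$ are a welcome clarification of points the paper leaves implicit.
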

\begin{proof}
We compute 
\begin{align*}
\sum_{g \in \langle a \rangle} x_{g b} x_{b^{-1} g^{-1} h} 
&= \sum_{g \in \langle a \rangle} x_{b^{-1} g^{-1} h} x_{g b} \\ 
&= \sum_{g \in \langle a \rangle} x_{h^{-1} g b^{-1}} x_{g b} \\ 
&= \sum_{g \in \langle a \rangle} x_{h^{-1} g b^{3}} x_{g b} \\ 
&= \sum_{g \in \langle a \rangle} x_{h^{-1} (h g b^{-2}) b^{3}} x_{h g b^{-2} b} \\ 
&= \sum_{g \in \langle a \rangle} x_{g b} x_{h g b^{-1}}
\end{align*}
as required. 
\end{proof}

\begin{lem}\label{lem:6.2.2}
If $h \in G \setminus \langle a \rangle$, then we have 
$$
\sum_{g \in G \setminus \langle a \rangle} x_{g} x_{g^{-1} h} = \sum_{g \in \langle a \rangle} x_{g} x_{g h}. 
$$
\end{lem}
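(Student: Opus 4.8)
The plan is to mimic the change-of-variable technique used throughout Section~5 and Section~6.1, but now with the subtlety that both sides run over different index sets ($G\setminus\langle a\rangle$ on the left, $\langle a\rangle$ on the right). The key structural fact is that for $h\in G\setminus\langle a\rangle$, writing $h=a^{t}b$, multiplication by $b$ (or conjugation rules $bg=g^{-1}b$ for $g\in\langle a\rangle$) sets up a bijection between the coset $G\setminus\langle a\rangle$ and $\langle a\rangle$ that respects the pairing of subscripts appearing in the two sums. So first I would parametrize the left-hand sum by $g=g'b$ with $g'\in\langle a\rangle$, turning $\sum_{g\in G\setminus\langle a\rangle}x_{g}x_{g^{-1}h}$ into $\sum_{g'\in\langle a\rangle}x_{g'b}x_{(g'b)^{-1}h}=\sum_{g'\in\langle a\rangle}x_{g'b}x_{b^{-1}g'^{-1}h}$.

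Next I would simplify the exponent $b^{-1}g'^{-1}h$. Since $h\in G\setminus\langle a\rangle$, we have $g'^{-1}h\in G\setminus\langle a\rangle$, and using the presentation relations ($b^{-1}=b$ in $D_m$, $b^{-1}=b^{3}$ in $Q_m$, together with $bg=g^{-1}b$) I can rewrite $b^{-1}g'^{-1}h$ in the normal form $a^{k}$ for a suitable $k$ depending on $g'$ and $h$; concretely, if $h=a^{t}b$ then $b^{-1}g'^{-1}h=b^{-1}g'^{-1}a^{t}b$, and conjugating through $b$ gives an element of $\langle a\rangle$. After this reduction the left-hand side becomes a sum over $g'\in\langle a\rangle$ of $x_{g'b}$ times $x$ of an element of $\langle a\rangle$ that is an affine (monomial) function of $g'$. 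Then a single further substitution $g'\mapsto (\text{translate/invert of }g')$ should line this up with $\sum_{g\in\langle a\rangle}x_{g}x_{gh}$; note that $gh$ for $g\in\langle a\rangle$, $h=a^{t}b$ is $a^{t+k}b$ where $g=a^{k}$, so $gh\in G\setminus\langle a\rangle$ and the subscript $gh$ matches one of the two factors. This is essentially the same manipulation as in Lemma~\ref{lem:6.2.1}, so I would model the computation on that proof, displaying the chain of equalities in a single \texttt{align*} block.

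The main obstacle I anticipate is bookkeeping the $b^{\pm1}$ powers correctly and uniformly across the two cases $G=D_m$ (where $b^2=e$) and $G=Q_m$ (where $b^2=a^m\in\langle a\rangle$), since the element $b^2$ is not trivial in $Q_m$ and could in principle intrude into the subscripts; I expect it to cancel exactly as $\delta(G)\chi_l'(b^{-2})=1$ did in Lemma~\ref{lem:5.1.3}, but the cancellation must be checked rather than assumed. A clean way to handle this is to observe that whenever $b^{2}$ appears inside a subscript $x_{\cdots}$, it appears symmetrically on both occurrences after the substitution, so shifting the summation variable $g\in\langle a\rangle$ by the fixed element $b^{2}\in\langle a\rangle$ (a bijection of $\langle a\rangle$) removes it. With that observation in place the identity reduces to a purely formal relabeling, and I would write it out in the same style as the preceding lemmas.
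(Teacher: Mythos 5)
Your plan is correct and follows essentially the same route as the paper: write $h=a^{t}b$, parametrize $G\setminus\langle a\rangle$ as $\langle a\rangle b$, push the $b$'s through using $b^{-1}a^{j}b=a^{-j}$ so that the second subscript lands in $\langle a\rangle$, and finish with a translation of the summation variable. Your worry about $b^{2}$ turns out to be moot here, since the only occurrence is $b^{-1}(\cdot)b$ with the middle factor in $\langle a\rangle$, which is plain conjugation in both $D_m$ and $Q_m$ and never produces a stray $b^{2}$ in a subscript.
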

\begin{proof}
We compute 
\begin{align*}
\sum_{g \in G \setminus \langle a \rangle} x_{g} x_{g^{-1} h} 
&= \sum_{g \in \langle a \rangle} x_{g b} x_{b^{-1} g^{-1} a^{k} b} \\ 
&= \sum_{g \in \langle a \rangle} x_{g b} x_{b^{-1} b g a^{-k}} \\ 
&= \sum_{g \in \langle a \rangle} x_{g b} x_{g a^{-k}} \\ 
&= \sum_{g \in \langle a \rangle} x_{g a^{k} b} x_{g} \\ 
&= \sum_{g \in \langle a \rangle} x_{g} x_{g a^{k} b} \\ 
&= \sum_{g \in \langle a \rangle} x_{g} x_{g h} 
\end{align*}
as required. 
\end{proof}

Define $[a, b] = a b - b a$ where $a, b \in \mathbb{C} G$. 
This is equal to zero if and only if $a$ and $b$ commute.

\begin{thm}\label{thm:6.2.3}
The following formula holds. 
$$
\left[ \alpha_{1}, \alpha_{2} \right] = 0. 
$$
\end{thm}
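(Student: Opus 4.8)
The plan is to exploit Lemma~\ref{lem:5.2.1}, which already gives the product $\alpha_1 \alpha_2 = \sum_{h \in \langle a \rangle} A_h h$ in closed form. The key observation is that the right-hand side is a linear combination, over the abelian subgroup $\langle a \rangle$, of the coefficients $A_h$, and by Lemma~\ref{lem:5.1.1}(2) these satisfy $A_h = A_{h^{-1}}$. So the strategy is: compute $\alpha_2 \alpha_1$ by the same kind of manipulation used in the proof of Lemma~\ref{lem:5.2.1}, show it also lands in the group algebra of $\langle a \rangle$, and then match it against $\sum_{h} A_h h$ using the symmetry $A_h = A_{h^{-1}}$ together with the fact that $h \mapsto h^{-1}$ is a bijection of $\langle a \rangle$.

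Concretely, first I would run the computation for $\alpha_2 \alpha_1$ in parallel with the displayed computation of $\alpha_1 \alpha_2$ in Lemma~\ref{lem:5.2.1}: expand $\alpha_2 = \sum_{g \in \langle a \rangle} \chi_2(g) x_{g^{-1}} g + \sum_{g \in G \setminus \langle a \rangle} \chi_2(g) x_g g$ against $\alpha_1 = \sum_{g'} x_{g'} g'$, split the sum into the part with $g \in \langle a \rangle$ and the part with $g \in G \setminus \langle a \rangle$ (rewriting the latter via $g = g'' b$, $\chi_2(g''b) = -1$), use the relation $b g'' = g''^{-1} b$ to push everything into $\langle a \rangle$, and collect terms by a change of variable $h = g g'$ (note the order is reversed compared to Lemma~\ref{lem:5.2.1}). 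This should yield $\alpha_2 \alpha_1 = \sum_{h \in \langle a \rangle} B_h h$ for some homogeneous polynomials $B_h$, and the task reduces to proving $B_h = A_h$ for every $h \in \langle a \rangle$.

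Then I would identify $B_h$ explicitly and compare with $A_h = \sum_{g \in \langle a \rangle}(x_g x_{hg} - x_{gb} x_{hgb^{-1}})$. I expect $B_h$ to come out as something like $\sum_{g \in \langle a \rangle}(x_g x_{g h} - x_{g b} x_{?})$, i.e.\ the ``same'' polynomial but written with $h$ on the right rather than the left; since $\langle a \rangle$ is abelian this already matches the first sum after a change of variable, and the second sum should match after using $b^{-3} = b$ and reindexing exactly as in the proof of Lemma~\ref{lem:5.1.1}(2). In fact the cleanest route is probably: show directly that $B_h = A_{h^{-1}}$ by such a reindexing, and then invoke Lemma~\ref{lem:5.1.1}(2) to conclude $B_h = A_{h^{-1}} = A_h$, hence $\alpha_2 \alpha_1 = \sum_h A_h h = \alpha_1 \alpha_2$, i.e.\ $[\alpha_1, \alpha_2] = 0$.

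The main obstacle is purely bookkeeping: keeping the substitutions $g \mapsto g'b$, $bg' = g'^{-1}b$, $b^{-1} = b^{3} b^{-4} = b^{3}$ (valid since $b^4 = e$ in both $D_m$ and $Q_m$) and the change of summation variable straight, and being careful that the noncommutativity means $\alpha_2\alpha_1$ must be expanded with $\alpha_2$ on the left. There is no conceptual difficulty — once the product is shown to lie in $\mathbb{C}\langle a \rangle$ and is expressed via the $A_h$'s, the symmetry $A_h = A_{h^{-1}}$ (Lemma~\ref{lem:5.1.1}) does all the work. An alternative, essentially equivalent, approach would be to apply Lemma~\ref{lem:6.1.1}: the change of variables there swaps $\alpha_1 \leftrightarrow \alpha_2$, so applying it to the identity $\alpha_1\alpha_2 = \sum_h A_h h$ gives $\alpha_2 \alpha_1 = \sum_h \widetilde{A_h}\, h$ where $\widetilde{A_h}$ is $A_h$ after the substitution; one then checks $\widetilde{A_h} = A_h$, which again reduces to the symmetry in Lemma~\ref{lem:5.1.1}.
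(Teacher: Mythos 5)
Your proposal is correct and follows essentially the same route as the paper: the paper also computes $\alpha_{2}\alpha_{1}$ by direct expansion, changes variables to $h = g g'$, and uses the reindexing identities (packaged there as Lemmas~\ref{lem:6.2.1} and~\ref{lem:6.2.2}) to identify the coefficient of each $h \in \langle a \rangle$ as $A_{h}$ and kill the coefficients outside $\langle a \rangle$, concluding via Lemma~\ref{lem:5.2.1}. Your optional detour through $B_h = A_{h^{-1}}$ and Lemma~\ref{lem:5.1.1}(2) is harmless but not needed once the substitution $h = gg'$ is made.
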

\begin{proof}
We have 
\begin{align*}
\alpha_{2} \alpha_{1} 
&= \left( \sum_{g \in \langle a \rangle} x_{g^{-1}} g - \sum_{g \in G \setminus \langle a \rangle} x_{g} g \right) \left( \sum_{g' \in G} x_{g'} g' \right) \\
&= \sum_{g' \in G} \left( \sum_{g \in \langle a \rangle} x_{g^{-1}} x_{g'} - \sum_{g \in G \setminus \langle a \rangle} x_{g} x_{g'} g g' \right) \\ 
&= \sum_{h \in G} \left( \sum_{g \in \langle a \rangle} x_{g^{-1}} x_{g^{-1} h} - \sum_{g \in G \setminus \langle a \rangle} x_{g} x_{g^{-1} h} \right) h. 
\end{align*}
By Lemmas~$\ref{lem:6.1.1}$ and~$\ref{lem:6.2.2}$, we have 
\begin{align*}
\alpha_{2} \alpha_{1} 
&= \sum_{h \in \langle a \rangle} \left( \sum_{g \in \langle a \rangle} x_{g} x_{h g} - \sum_{g \in \langle a \rangle} x_{g b} x_{h g b^{-1}} \right) \\ 
&= \alpha_{1} \alpha_{2}. 
\end{align*}
This completes the proof. 
\end{proof}

\begin{thm}\label{thm:6.2.4}
The following formula holds. 
$$
\left[ \alpha_{3}, \alpha_{4} \right] = 0. 
$$
\end{thm}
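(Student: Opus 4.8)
\textbf{Proof proposal for Theorem~\ref{thm:6.2.4}.}
The plan is to mimic the proof of Theorem~\ref{thm:6.2.3} as closely as possible, exploiting the transformation machinery of Section~6.1 together with the explicit product formula from Lemma~\ref{lem:5.2.2}. First I would recall that, by Lemma~\ref{lem:5.2.2}, we already know
$$
\alpha_{3} \alpha_{4} = \sum_{h \in \langle a \rangle} \chi_{\frac{\vert \langle a \rangle \vert}{2}}'(h) A_{h} h,
$$
so it suffices to show that $\alpha_{4} \alpha_{3}$ equals the same right-hand side. The natural route is to compute $\alpha_{4}\alpha_{3}$ directly, expanding each factor into its $\langle a \rangle$-part and its $G \setminus \langle a \rangle$-part as in the proof of Theorem~\ref{thm:6.2.3}, collecting the coefficient of each $h \in G$ via the substitution $h = g g'$, and then reducing the resulting four sums.

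The key simplifications I expect to need are the two auxiliary identities Lemmas~\ref{lem:6.2.1} and~\ref{lem:6.2.2}, exactly as in the $[\alpha_1,\alpha_2]$ case, but now carrying the extra character factors $\chi_3$ and $\chi_4$. Concretely, after the change of variable the coefficient of $h$ in $\alpha_4\alpha_3$ will be a combination of terms of the shape $\chi_3(g^{-1}h)\chi_4(g) x_{\ast}x_{\ast}$ over $g \in \langle a \rangle$ and over $g \in G\setminus\langle a\rangle$; using $\chi_3(g)\chi_4(g)=1$ for $g\in\langle a\rangle$ (already noted in the proof of Lemma~\ref{lem:5.2.2}) and $\chi_3(b^{-1})\chi_4(b)=-1$, these characters will consolidate into a single overall factor $\chi_3(h)=\chi_{\frac{\vert\langle a\rangle\vert}{2}}'(h)$ for $h\in\langle a\rangle$, and the off-diagonal ($h \notin \langle a\rangle$) contribution will cancel. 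What remains inside is precisely $\sum_{g\in\langle a\rangle}(x_g x_{hg}-x_{gb}x_{hgb^{-1}}) = A_h$, after applying Lemmas~\ref{lem:6.2.1} and~\ref{lem:6.2.2} to rewrite the mixed $b$-terms. An alternative, possibly cleaner, route is to apply the change of variable $x_g \mapsto \chi_3(g)x_g$ of Lemma~\ref{lem:6.1.2} to the already-established identity $[\alpha_1,\alpha_2]=0$: that substitution sends $\alpha_1 \mapsto \alpha_3$ and $\alpha_2 \mapsto \alpha_4$ (when $m$ is even, or when $G=Q_m$), so $[\alpha_3,\alpha_4]=0$ follows immediately in those cases, since the commutator bracket is preserved under any substitution in the commuting variables $x_g$.

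The main obstacle will be the case distinction flagged in Theorem~\ref{thm:1.1.7}: when $G=D_m$ with $m$ odd, the representations $\chi_3,\chi_4$ do not exist (there are only two degree one representations), so the statement is vacuous there and must be excluded — I would state the hypothesis as ``$G=Q_m$, or $G=D_m$ with $m$ even'' to match the scope of Theorem~\ref{thm:1.1.7}. The substitution-based argument via Lemma~\ref{lem:6.1.2} is only valid in exactly that range (Lemma~\ref{lem:6.1.2} sends $\alpha_3\mapsto\alpha_1$ and $\alpha_4\mapsto\alpha_2$ only when $m$ is even), which is a pleasant consistency check; so I would present the proof as: apply the substitution $x_g\mapsto\chi_3(g)x_g$ to Theorem~\ref{thm:6.2.3}, invoke Lemma~\ref{lem:6.1.2} to identify the images of $\alpha_1,\alpha_2$ with $\alpha_3,\alpha_4$, and conclude. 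If one prefers a self-contained computation, the direct expansion above works verbatim with the bookkeeping of $\chi_3,\chi_4$ values, the only real care being the sign $\chi_3(b^{-1})\chi_4(b)=-1$ that makes the two $b$-crossed double sums combine with the correct sign into $-x_{gb}x_{hgb^{-1}}$.
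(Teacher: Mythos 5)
Your final argument --- apply the substitution $x_g \mapsto \chi_3(g)x_g$ of Lemma~\ref{lem:6.1.2} to the identity $[\alpha_1,\alpha_2]=0$ of Theorem~\ref{thm:6.2.3} and use that it sends $\alpha_1\mapsto\alpha_3$, $\alpha_2\mapsto\alpha_4$ --- is exactly the paper's proof, and your handling of the vacuous case $G=D_m$ with $m$ odd is consistent with the caveat in Theorem~\ref{thm:1.1.7}. (Note only that the directions $\alpha_1\mapsto\alpha_3$ and $\alpha_2\mapsto\alpha_4$ in Lemma~\ref{lem:6.1.2} hold whenever $\chi_3,\chi_4$ exist, so your parenthetical restriction to $m$ even is needed only for the reverse maps, which this proof does not use.)
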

\begin{proof}
This theorem follows from Lemma $\ref{lem:6.1.2}$ and Theorem $\ref{thm:6.2.3}$. 
\end{proof}

\begin{lem}\label{lem:6.2.5}
The following formula holds. 
$$
\left[ \alpha_{1}, \alpha_{3} \right] 
= \sum_{h \in G} \sum_{g \in G} \chi_{3}(g^{-1} h) x_{g} x_{g^{-1} h} h 
- \sum_{h \in G} \sum_{g \in G} \chi_{3}(g) x_{g} x_{g^{-1} h} h. 
$$
\end{lem}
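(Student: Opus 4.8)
The plan is to prove Lemma~$\ref{lem:6.2.5}$ by a direct expansion of the two products $\alpha_{1} \alpha_{3}$ and $\alpha_{3} \alpha_{1}$ from the definitions $\alpha_{1} = \sum_{g \in G} x_{g} g$ and $\alpha_{3} = \sum_{g \in G} \chi_{3}(g) x_{g} g$, followed by a reindexing of each double sum by the group element that appears in the product of the two group parts. No deeper structure of $D_{m}$ or $Q_{m}$ is needed; this is a group-algebra bookkeeping computation.

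First I would write $\alpha_{1} \alpha_{3} = \sum_{s \in G} \sum_{t \in G} \chi_{3}(t) x_{s} x_{t} \, s t$ and make the change of variables $h = s t$ (so that $t = s^{-1} h$ and, for each fixed $s$, $h$ ranges over $G$). This gives $\alpha_{1} \alpha_{3} = \sum_{h \in G} \sum_{s \in G} \chi_{3}(s^{-1} h) x_{s} x_{s^{-1} h} \, h$, and renaming $s$ as $g$ produces exactly the first double sum on the right-hand side of the claimed identity. Next I would treat $\alpha_{3} \alpha_{1} = \sum_{t \in G} \sum_{s \in G} \chi_{3}(t) x_{t} x_{s} \, t s$ the same way, with $h = t s$ (so $s = t^{-1} h$), obtaining $\alpha_{3} \alpha_{1} = \sum_{h \in G} \sum_{t \in G} \chi_{3}(t) x_{t} x_{t^{-1} h} \, h$; renaming $t$ as $g$ gives the second double sum. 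Subtracting, $[\alpha_{1}, \alpha_{3}] = \alpha_{1} \alpha_{3} - \alpha_{3} \alpha_{1}$ is precisely the asserted formula.

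The hard part will be essentially nothing substantive: the only point requiring care is to keep straight, in each of the two products, which factor contributes the character $\chi_{3}$ and to use in each case the correct substitution $h = (\text{group part of the left factor})(\text{group part of the right factor})$, since $G$ is nonabelian and the two orders of multiplication give genuinely different coefficients. Once the indexing is done consistently, the difference of the two reindexed sums collapses to the stated expression with no further manipulation.
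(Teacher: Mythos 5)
Your proposal is correct and follows essentially the same route as the paper: expand each product $\alpha_{1}\alpha_{3}$ and $\alpha_{3}\alpha_{1}$ as a double sum and reindex by $h$ equal to the product of the two group parts, keeping track of which factor carries $\chi_{3}$. The paper's proof is exactly this computation, so there is nothing to add.
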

\begin{proof}
We compute 
\begin{align*}
\left[ \alpha_{1}, \alpha_{3} \right] 
&= \sum_{g \in G} x_{g} g \sum_{g' \in G} \chi_{3}(g') x_{g'} g' - \sum_{g \in G} \chi_{3}(g) x_{g} g \sum_{g' \in G} x_{g'} g' \\ 
&= \sum_{g \in G} \sum_{g' \in G} \chi_{3}(g') x_{g} x_{g'} g g' - \sum_{g \in G} \sum_{g' \in G} \chi_{3}(g) x_{g} x_{g'} g g' \\ 
&= \sum_{g \in G} \sum_{h \in G} \chi_{3}(g^{-1} h) x_{g} x_{g^{-1} h} h - \sum_{g \in G} \sum_{h \in G} \chi_{3}(g) x_{g} x_{g^{-1} h} h
\end{align*}
as required. 
\end{proof}

\begin{lem}\label{lem:6.2.6}
Let $h$ be $a^{k}$ with $k$ odd. 
Then, we have 
\begin{align*}
\sum_{g \in \langle a \rangle} \chi_{3}(g) x_{g} x_{g^{-1} h} = 0. 
\end{align*}
\end{lem}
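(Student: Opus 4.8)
The plan is to detect a sign symmetry of the sum under the substitution $g \mapsto g^{-1}h$, which is a bijection of $\langle a \rangle$ onto itself since $h \in \langle a \rangle$; this replacement multiplies each summand by the scalar $\chi_{3}(h)$, and $\chi_{3}(h) = (-1)^{k} = -1$ by hypothesis, so the whole sum equals its own negative and therefore vanishes.

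In more detail, I would first record the facts about the restriction of $\chi_{3}$ to $\langle a \rangle$ that are read off from the tables of Section~4: it is the homomorphism $a^{k} \mapsto (-1)^{k}$, so it is multiplicative and real-valued (in particular $\chi_{3}(g^{-1}) = \chi_{3}(g)$ for all $g \in \langle a \rangle$), and $\chi_{3}(h) = (-1)^{k} = -1$. I would note in passing that $a^{k} \mapsto (-1)^{k}$ is well defined on $\langle a \rangle$ exactly when $|\langle a \rangle|$ is even, which is precisely the range of cases in which $\chi_{3}$ appears among the degree one representations of Section~4, so there is no hidden restriction here.

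Next, write $S$ for the left-hand side and carry out the change of summation variable $g \mapsto g'$ with $g' = g^{-1}h$, equivalently $g = h(g')^{-1}$. Then the summand $\chi_{3}(g)\,x_{g}\,x_{g^{-1}h}$ becomes $\chi_{3}(h(g')^{-1})\,x_{h(g')^{-1}}\,x_{g'}$, which equals $\chi_{3}(h)\,\chi_{3}(g')\,x_{g'}\,x_{(g')^{-1}h}$ because $\chi_{3}$ is multiplicative and real-valued, $\langle a \rangle$ is abelian, and the variables $x_{g}$ commute. Summing over $g' \in \langle a \rangle$ gives $S = \chi_{3}(h)\,S = -S$, hence $S = 0$.

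The argument is essentially a one-line computation, so there is no serious obstacle; the only points needing a moment's care are that $g \mapsto g^{-1}h$ is a genuine bijection of the index set (it is an involution of $\langle a \rangle$, using $h \in \langle a \rangle$ together with commutativity) and that, after reindexing, the two subscripts $h(g')^{-1}$ and $g'$ are exactly $(g')^{-1}h$ and $g'$, i.e.\ the pair naturally attached to $g'$. Equivalently one could pair each $g$ with $g^{-1}h$ directly and observe that $g \neq g^{-1}h$, since $g = g^{-1}h$ would force $g^{2} = a^{k}$ with $k$ odd although $|\langle a \rangle|$ is even; the two contributions in each pair then cancel.
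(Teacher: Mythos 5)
Your proof is correct and is essentially the paper's own argument: the paper likewise reindexes the sum by $g \mapsto g^{-1}h$, uses that $\chi_{3}(g^{-1}h) = -\chi_{3}(g)$ on $\langle a \rangle$ when $k$ is odd, and concludes $S = -S = 0$. Your additional remarks (well-definedness of $\chi_{3}$ on $\langle a \rangle$, and the fixed-point-free pairing $g \leftrightarrow g^{-1}h$) are accurate but not needed beyond what the paper does.
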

\begin{proof}
We compute 
\begin{align*}
\sum_{g \in \langle a \rangle} \chi_{3}(g) x_{g} x_{g^{-1} h} 
&= \sum_{g \in \langle a \rangle} \chi_{3}(g^{-1} h) x_{g^{-1} h} x_{g} \\ 
&= - \sum_{g \in \langle a \rangle} \chi_{3}(g) x_{g} x_{g^{-1} h} 
\end{align*}
as required. 
\end{proof}

\begin{lem}\label{lem:6.2.7}
The coefficient of $h = a^{k}$ in $\left[ \alpha_{1}, \alpha_{3} \right]$ is 
$$
(-1)^{k} \sum_{g \in G \setminus \langle a \rangle} \chi_{3}(g^{-1}) x_{g} x_{g^{-1} h} 
- \sum_{g \in G \setminus \langle a \rangle} \chi_{3}(g) x_{g} x_{g^{-1} h}. 
$$
\end{lem}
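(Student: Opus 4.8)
The plan is to start from the closed formula for $[\alpha_{1},\alpha_{3}]$ supplied by Lemma~\ref{lem:6.2.5},
$$
[\alpha_{1},\alpha_{3}] = \sum_{h \in G}\sum_{g \in G} \chi_{3}(g^{-1}h)\, x_{g} x_{g^{-1}h}\, h \;-\; \sum_{h \in G}\sum_{g \in G} \chi_{3}(g)\, x_{g} x_{g^{-1}h}\, h,
$$
and read off the coefficient of a fixed element $h = a^{k} \in \langle a \rangle$, which is
$$
\sum_{g \in G}\bigl(\chi_{3}(g^{-1}h) - \chi_{3}(g)\bigr) x_{g} x_{g^{-1}h}.
$$
I would then split the index set as the disjoint union $G = \langle a \rangle \sqcup (G \setminus \langle a \rangle)$ and handle the two pieces separately.

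For the piece indexed by $g \in \langle a \rangle$, I would use the character table of Section~4, which gives $\chi_{3}(a^{j}) = (-1)^{j}$; hence $\chi_{3}(g^{-1}) = \chi_{3}(g)$ for all $g \in \langle a \rangle$ and $\chi_{3}(h) = (-1)^{k}$, so that this piece collapses to
$$
\bigl(\chi_{3}(h) - 1\bigr)\sum_{g \in \langle a \rangle} \chi_{3}(g)\, x_{g} x_{g^{-1}h}.
$$
When $k$ is even this vanishes because $\chi_{3}(h) = 1$; when $k$ is odd it equals $-2 \sum_{g \in \langle a \rangle} \chi_{3}(g)\, x_{g} x_{g^{-1}h}$, which is zero by Lemma~\ref{lem:6.2.6}. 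So the $\langle a \rangle$-piece is identically zero in all cases.

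For the remaining piece indexed by $g \in G \setminus \langle a \rangle$, I would use that $\chi_{3}$ is a homomorphism together with $h \in \langle a \rangle$ to write $\chi_{3}(g^{-1}h) = \chi_{3}(g^{-1})\chi_{3}(h) = (-1)^{k}\chi_{3}(g^{-1})$, which turns this piece into
$$
(-1)^{k} \sum_{g \in G \setminus \langle a \rangle} \chi_{3}(g^{-1})\, x_{g} x_{g^{-1}h} \;-\; \sum_{g \in G \setminus \langle a \rangle} \chi_{3}(g)\, x_{g} x_{g^{-1}h},
$$
which is exactly the claimed expression. The only point that needs care is the parity case distinction for the $\langle a \rangle$-piece, since Lemma~\ref{lem:6.2.6} is available only for odd $k$ while the even case must instead be dispatched by the direct cancellation $\chi_{3}(h) - 1 = 0$; the rest is bookkeeping with the disjoint decomposition of $G$ and the multiplicativity of $\chi_{3}$.
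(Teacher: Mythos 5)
Your proposal is correct and follows essentially the same route as the paper: read the coefficient off Lemma~\ref{lem:6.2.5}, pull out $\chi_{3}(h)=(-1)^{k}$ by multiplicativity, split the sum over $G=\langle a\rangle\sqcup(G\setminus\langle a\rangle)$, and kill the $\langle a\rangle$-piece via Lemma~\ref{lem:6.2.6}. You are slightly more careful than the paper in noting that Lemma~\ref{lem:6.2.6} only covers odd $k$ and that the even case instead cancels directly from $(-1)^{k}-1=0$, a point the paper leaves implicit.
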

\begin{proof}
We have 
\begin{align*}
& \sum_{g \in G} \chi_{3}(g^{-1} h) x_{g} x_{g^{-1} h} - \sum_{g \in G} \chi_{3}(g) x_{g} x_{g^{-1} h} \\
& \quad = (-1)^{k} \sum_{g \in G} \chi_{3}(g^{-1}) x_{g} x_{g^{-1} h} - \sum_{g \in G} \chi_{3}(g) x_{g} x_{g^{-1} h} \\
& \quad = (-1)^{k} \sum_{g \in \langle a \rangle} \chi_{3}(g) x_{g} x_{g^{-1} h} 
+ (-1)^{k} \sum_{g \in G \setminus \langle a \rangle} \chi_{3}(g^{-1}) x_{g} x_{g^{-1} h} \\ 
& \quad \quad - \sum_{g \in \langle a \rangle} \chi_{3}(g) x_{g} x_{g^{-1} h} 
- \sum_{g \in G \setminus \langle a \rangle} \chi_{3}(g) x_{g} x_{g^{-1} h}. 
\end{align*}
From Lemma $\ref{lem:6.2.6}$, this completes the proof. 
\end{proof}

By Lemma $\ref{lem:6.2.7}$, we know that 
$$
\left[ \alpha_{1}, \alpha_{3} \right] \neq 0. 
$$
From Lemma $\ref{lem:6.1.1}$, we have 
$$
\left[ \alpha_{2}, \alpha_{4} \right] \neq 0. 
$$
Namely, we have to be careful with the factors in Corollary $\ref{cor:5.2.4}$.

\begin{lem}\label{lem:6.2.8}
The coefficient of $h = a^{k} b$ in $\left[ \alpha_{1}, \alpha_{3} \right]$ is 
\begin{enumerate}
\item For the case that $m$ is odd. 
$$
\begin{cases}
(1 + i) \displaystyle\sum_{g \in \langle a \rangle} \chi_{3}(g) x_{g} x_{g h} 
- (1 + i) \displaystyle\sum_{g \in \langle a \rangle} \chi_{3}(g) x_{g} x_{g^{-1} h}, & {\rm if} \; k \; {\rm is \; odd}. \\
(1 - i) \displaystyle\sum_{g \in \langle a \rangle} \chi_{3}(g) x_{g} x_{g h} 
- (1 - i) \displaystyle\sum_{g \in \langle a \rangle} \chi_{3}(g) x_{g} x_{g^{-1} h}, & {\rm if} \; k \; {\rm is \; even}. 
\end{cases}
$$
\item For the case that $m$ is even. 
$$
\begin{cases}
-2 \displaystyle\sum_{g \in G} \chi_{3}(g) x_{g} x_{g^{-1} h}, & {\rm if} \; k \; {\rm is \; odd}. \\ 
0, & {\rm if} \; k \; {\rm is \; even}. 
\end{cases}
$$
\end{enumerate}
\end{lem}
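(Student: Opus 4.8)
The plan is to start from the expression for $[\alpha_1,\alpha_3]$ furnished by Lemma~\ref{lem:6.2.5}, according to which the coefficient of an arbitrary $h\in G$ in $[\alpha_1,\alpha_3]$ is
$$
\sum_{g\in G}\chi_3(g^{-1}h)\,x_g x_{g^{-1}h}-\sum_{g\in G}\chi_3(g)\,x_g x_{g^{-1}h}.
$$
Fix $h=a^{k}b\in G\setminus\langle a\rangle$ (when $m$ is odd this forces $G=Q_m$, since $D_m$ has no $\chi_3$ in that case). Since $\langle a\rangle$ has index $2$, for this $h$ we have $g^{-1}h\in\langle a\rangle$ precisely when $g\notin\langle a\rangle$, so I would split each of the two sums over $g$ into its $\langle a\rangle$-part and its $(G\setminus\langle a\rangle)$-part and treat the two parts separately.

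For the part with $g\in\langle a\rangle$: from the tables of Section~4 we have $\chi_3(a^{j})=(-1)^{j}$, hence $\chi_3(g^{-1})=\chi_3(g)$ and $\chi_3(g^{-1}h)=\chi_3(g)\chi_3(h)$ with $\chi_3(h)=(-1)^{k}\chi_3(b)$; this part therefore collapses to $\bigl((-1)^{k}\chi_3(b)-1\bigr)\sum_{g\in\langle a\rangle}\chi_3(g)\,x_g x_{g^{-1}h}$. For the part with $g\in G\setminus\langle a\rangle$: writing $g=a^{j}b$ one gets $g^{-1}h=a^{j-k}\in\langle a\rangle$ and $\chi_3(g)=(-1)^{j}\chi_3(b)$, and the substitution $i=j-k$ together with the relation $ba^{i}=a^{-i}b$ (so that the second subscript becomes $a^{i}a^{k}b=gh$ for $g=a^{i}$) turns the $\chi_3(g^{-1}h)$-sum into $\sum_{g\in\langle a\rangle}\chi_3(g)\,x_g x_{gh}$ and the $\chi_3(g)$-sum into $(-1)^{k}\chi_3(b)\sum_{g\in\langle a\rangle}\chi_3(g)\,x_g x_{gh}$; this is the same coset manipulation used in Lemma~\ref{lem:6.2.2}. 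Adding the two parts, the coefficient of $h=a^{k}b$ equals
$$
\bigl(1-(-1)^{k}\chi_3(b)\bigr)\left(\sum_{g\in\langle a\rangle}\chi_3(g)\,x_g x_{gh}-\sum_{g\in\langle a\rangle}\chi_3(g)\,x_g x_{g^{-1}h}\right).
$$

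It then remains to read off $\chi_3(b)$ in each case. For $G=Q_m$ with $m$ odd, $\chi_3(b)=i$, so the scalar $1-(-1)^{k}\chi_3(b)$ is $1+i$ when $k$ is odd and $1-i$ when $k$ is even, which is part~(1). For $G=D_m$ or $G=Q_m$ with $m$ even, $\chi_3(b)=1$, so the scalar is $0$ when $k$ is even — giving the vanishing in part~(2) — and $2$ when $k$ is odd; in the latter case $(-1)^{k}\chi_3(b)=-1$, so the $(G\setminus\langle a\rangle)$-part computation above lets me rewrite $\sum_{g\in\langle a\rangle}\chi_3(g)x_g x_{gh}=-\sum_{g\in G\setminus\langle a\rangle}\chi_3(g)x_g x_{g^{-1}h}$, and the two sums inside the parentheses merge into $-\sum_{g\in G}\chi_3(g)\,x_g x_{g^{-1}h}$, yielding the claimed $-2\sum_{g\in G}\chi_3(g)x_g x_{g^{-1}h}$.

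The one genuinely delicate point is the change-of-variables bookkeeping on the coset $G\setminus\langle a\rangle$: one must simultaneously track the sign $(-1)^{j}$, the constant $\chi_3(b)$, and the effect of pushing $b$ past powers of $a$, and a single misplaced sign would corrupt the final prefactor. Beyond that, the argument is just linear bookkeeping in the group ring using the explicit character values of Section~4, so I do not anticipate any conceptual difficulty.
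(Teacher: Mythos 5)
Your proposal is correct and follows essentially the same route as the paper: starting from the coefficient formula of Lemma~\ref{lem:6.2.5}, splitting the sums over the cosets of $\langle a \rangle$, reindexing the $G \setminus \langle a \rangle$ part via $g = a^{j}b$, and reading off the values of $\chi_{3}$; your uniform prefactor $1 - (-1)^{k}\chi_{3}(b)$ reproduces exactly the paper's case-by-case constants, and your final merge into $-2\sum_{g \in G}\chi_{3}(g)x_{g}x_{g^{-1}h}$ for $m$ even, $k$ odd checks out.
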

\begin{proof}
\begin{enumerate}
\item For the case that $m$ is odd, 
we compute 
\begin{align*}
&\sum_{g \in G} \chi_{3}(g^{-1} h) x_{g} x_{g^{-1} h} - \sum_{g \in G} \chi_{3}(g) x_{g} x_{g^{-1} h} \\
& \quad = (-1)^{k} i \sum_{g \in G} \chi_{3}(g^{-1}) x_{g} x_{g^{-1} h} - \sum_{g \in G} \chi_{3}(g) x_{g} x_{g^{-1} h} \\ 
& \quad = (-1)^{k} i \sum_{g \in \langle a \rangle} \chi_{3}(g^{-1}) x_{g} x_{g^{-1} h} + (-1)^{k} i \sum_{g \in G \setminus \langle a \rangle} \chi_{3}(g^{-1}) x_{g} x_{g^{-1} h} \\ 
& \quad \quad - \sum_{g \in \langle a \rangle} \chi_{3}(g) x_{g} x_{g^{-1} h} - \sum_{g \in G \setminus \langle a \rangle} \chi_{3}(g) x_{g} x_{g^{-1} h} \\ 
& \quad = (-1)^{k} i \sum_{g \in \langle a \rangle} \chi_{3}(g) x_{g} x_{g^{-1} h} + (-1)^{k} i \sum_{g \in \langle a \rangle} \chi_{3}(b^{-1} g^{-1}) x_{g b} x_{b^{-1} g^{-1} h} \\ 
& \quad \quad - \sum_{g \in \langle a \rangle} \chi_{3}(g) x_{g} x_{g^{-1} h} - \sum_{g \in \langle a \rangle} \chi_{3}(g b) x_{g b} x_{b^{-1} g^{-1} h} \\ 
& \quad = (-1)^{k} i \sum_{g \in \langle a \rangle} \chi_{3}(g) x_{g} x_{g^{-1} h} + (-1)^{k} \sum_{g \in \langle a \rangle} \chi_{3}(g) x_{g b} x_{a^{-k} g} \\ 
& \quad \quad - \sum_{g \in \langle a \rangle} \chi_{3}(g) x_{g} x_{g^{-1} h} - i \sum_{g \in \langle a \rangle} \chi_{3}(g) x_{g b} x_{a^{-k} g} \\ 
& \quad = (-1)^{k} i \sum_{g \in \langle a \rangle} \chi_{3}(g) x_{g} x_{g^{-1} h} + (-1)^{k} \sum_{g \in \langle a \rangle} \chi_{3}(g a^{k}) x_{g a^{k} g} x_{g} \\ 
& \quad \quad - \sum_{g \in \langle a \rangle} \chi_{3}(g) x_{g} x_{g^{-1} h} - i \sum_{g \in \langle a \rangle} \chi_{3}(g a^{k}) x_{g a^{k} b} x_{g} \\ 
& \quad = (-1)^{k} i \sum_{g \in \langle a \rangle} \chi_{3}(g) x_{g} x_{g^{-1} h} + \sum_{g \in \langle a \rangle} \chi_{3}(g) x_{g h} x_{g} \\ 
& \quad \quad - \sum_{g \in \langle a \rangle} \chi_{3}(g) x_{g} x_{g^{-1} h} + (-1)^{k + 1} i \sum_{g \in \langle a \rangle} \chi_{3}(g) x_{g h} x_{g} 
\end{align*}
as required. 
\item For the case that $m$ is even, 
we compute 
\begin{align*}
&\sum_{g \in G} \chi_{3}(g^{-1} h) x_{g} x_{g^{-1} h} - \sum_{g \in G} \chi_{3}(g) x_{g} x_{g^{-1} h} \\ 
& \quad = (-1)^{k} \sum_{g \in G} \chi_{3}(g) x_{g} x_{g^{-1} h} - \sum_{g \in G} \chi_{3}(g) x_{g} x_{g^{-1} h} 
\end{align*} 
as required. 
\end{enumerate}
\end{proof}

\begin{lem}\label{lem:6.2.9}
The following formula holds. 
\begin{align*}
\left[ \alpha_{1}, \alpha_{4} \right] 
&= \sum_{h \in G} \sum_{g \in \langle a \rangle} \chi_{3}(g) x_{g} x_{h g} h 
- \sum_{h \in G} \sum_{g \in G \setminus \langle a \rangle} \chi_{3}(g) x_{g} x_{h g^{-1}} h \\ 
& \quad - \sum_{h \in G} \sum_{g \in \langle a \rangle} \chi_{3}(g) x_{g} x_{g h} h 
+ \sum_{h \in G} \sum_{g \in G \setminus \langle a \rangle} \chi_{3}(g) x_{g} x_{g^{-1} h} h. 
\end{align*}
\end{lem}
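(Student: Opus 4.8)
The plan is to compute the commutator $[\alpha_1,\alpha_4]=\alpha_1\alpha_4-\alpha_4\alpha_1$ directly, in the same spirit as the proofs of Lemmas~\ref{lem:5.2.1} and~\ref{lem:5.2.2}. The first step is to rewrite $\alpha_4$ so that only $\chi_3$ occurs. From the character tables of Section~4 one reads off that $\chi_4(g)=\chi_3(g)$ for $g\in\langle a\rangle$ and $\chi_4(g)=-\chi_3(g)$ for $g\in G\setminus\langle a\rangle$, so
\begin{align*}
\alpha_4=\sum_{g\in\langle a\rangle}\chi_3(g)x_{g^{-1}}g-\sum_{g\in G\setminus\langle a\rangle}\chi_3(g)x_{g}g.
\end{align*}
We will also use $\chi_3(g^{-1})=\chi_3(g)$ for $g\in\langle a\rangle$ (indeed $\chi_3(a^{-k})=(-1)^{-k}=(-1)^k$).

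Next I would expand $\alpha_1\alpha_4=\bigl(\sum_{g'\in G}x_{g'}g'\bigr)\alpha_4$, splitting the product according to whether the index of the second factor lies in $\langle a\rangle$ or not, and then substitute $h=g'g$ to collect the coefficient of each $h\in G$. On the $\langle a\rangle$-part, replacing $g$ by $g^{-1}$ (a bijection of $\langle a\rangle$) and using $\chi_3(g^{-1})=\chi_3(g)$ turns $x_{g^{-1}}$ into $x_g$, yielding
\begin{align*}
\alpha_1\alpha_4=\sum_{h\in G}\sum_{g\in\langle a\rangle}\chi_3(g)x_{hg}x_{g}h-\sum_{h\in G}\sum_{g\in G\setminus\langle a\rangle}\chi_3(g)x_{hg^{-1}}x_{g}h.
\end{align*}
In exactly the same fashion, expanding $\alpha_4\alpha_1=\alpha_4\bigl(\sum_{g'\in G}x_{g'}g'\bigr)$ and substituting $h=gg'$ gives
\begin{align*}
\alpha_4\alpha_1=\sum_{h\in G}\sum_{g\in\langle a\rangle}\chi_3(g)x_{g}x_{gh}h-\sum_{h\in G}\sum_{g\in G\setminus\langle a\rangle}\chi_3(g)x_{g}x_{g^{-1}h}h.
\end{align*}
Subtracting these two expressions and using that the variables $x_g$ commute (so $x_{hg}x_g=x_gx_{hg}$ and $x_{hg^{-1}}x_g=x_gx_{hg^{-1}}$) produces the asserted identity for $[\alpha_1,\alpha_4]$.

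The computation is entirely routine. The only points requiring care are bookkeeping ones: keeping the two ranges $\langle a\rangle$ and $G\setminus\langle a\rangle$ separate throughout, correctly tracking which group element indexes each $x$ after the change of variables $h=g'g$ versus $h=gg'$, and noting that the minus sign on the complement part comes from $\chi_4=-\chi_3$ off $\langle a\rangle$ while the substitution $g\mapsto g^{-1}$ on $\langle a\rangle$ (legitimate because $\chi_3$ is real-valued there) is what rewrites the two $x$-indices as $x_g,x_{hg}$ and $x_g,x_{gh}$ respectively. I do not expect any genuine obstacle beyond this bookkeeping.
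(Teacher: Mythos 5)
Your proposal is correct and follows essentially the same route as the paper's proof: expand $\alpha_1\alpha_4-\alpha_4\alpha_1$ directly, substitute $h=g'g$ and $h=gg'$ respectively, convert $\chi_4$ to $\pm\chi_3$ on the two cosets, and re-index $g\mapsto g^{-1}$ on $\langle a\rangle$ using $\chi_3(g^{-1})=\chi_3(g)$ there. The only cosmetic difference is that you perform the $\chi_4\to\chi_3$ conversion at the outset rather than midway through the computation.
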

\begin{proof}
We compute 
\begin{align*}
\left[ \alpha_{1}, \alpha_{4} \right] 
&= \sum_{g' \in G} x_{g'} g' 
\left( \sum_{g \in \langle a \rangle} \chi_{4}(g) x_{g^{-1}} g + \sum_{g \in G \setminus \langle a \rangle} \chi_{4}(g) x_{g} g \right) \\
& \quad - \left( \sum_{g \in \langle a \rangle} \chi_{4}(g) x_{g^{-1}} g + \sum_{g \in G \setminus \langle a \rangle} \chi_{4}(g) x_{g} g \right) 
\sum_{g' \in G} x_{g'} g' \\ 
&= \sum_{g' \in G} \sum_{g \in \langle a \rangle} \chi_{4}(g) x_{g'} x_{g^{-1}} g' g 
+ \sum_{g' \in G} \sum_{g \in G \setminus \langle a \rangle} \chi_{4}(g) x_{g'} x_{g} g' g \\ 
& \quad - \sum_{g \in \langle a \rangle} \sum_{g' \in G} \chi_{4}(g) x_{g^{-1}} x_{g'} g g' 
- \sum_{g \in G \setminus \langle a \rangle} \sum_{g' \in G} \chi_{4}(g) x_{g} x_{g'} g g' \\ 
&= \sum_{h \in G} \sum_{g \in \langle a \rangle} \chi_{4}(g) x_{h g^{-1}} x_{g^{-1}} h 
+ \sum_{h \in G} \sum_{g \in G \setminus \langle a \rangle} \chi_{4}(g) x_{h g^{-1}} x_{g} h \\ 
& \quad - \sum_{g \in \langle a \rangle} \sum_{h \in G} \chi_{4}(g) x_{g^{-1}} x_{g^{-1} h} h 
- \sum_{g \in G \setminus \langle a \rangle} \sum_{h \in G} \chi_{4}(g) x_{g} x_{g^{-1} h} h \\ 
&= \sum_{h \in G} \sum_{g \in \langle a \rangle} \chi_{3}(g) x_{h g^{-1}} x_{g^{-1}} h 
- \sum_{h \in G} \sum_{g \in G \setminus \langle a \rangle} \chi_{3}(g) x_{h g^{-1}} x_{g} h \\ 
& \quad - \sum_{g \in \langle a \rangle} \sum_{h \in G} \chi_{3}(g) x_{g^{-1}} x_{g^{-1} h} h 
+ \sum_{g \in G \setminus \langle a \rangle} \sum_{h \in G} \chi_{3}(g) x_{g} x_{g^{-1} h} h \\ 
&= \sum_{h \in G} \sum_{g \in \langle a \rangle} \chi_{3}(g) x_{g} x_{h g} h 
- \sum_{h \in G} \sum_{g \in G \setminus \langle a \rangle} \chi_{3}(g) x_{g} x_{h g^{-1}} h \\ 
& \quad - \sum_{h \in G} \sum_{g \in \langle a \rangle} \chi_{3}(g) x_{g} x_{g h} h 
+ \sum_{h \in G} \sum_{g \in G \setminus \langle a \rangle} \chi_{3}(g) x_{g} x_{g^{-1} h} h
\end{align*}
as required. 
\end{proof}

\begin{lem}\label{lem:6.2.10}
The coefficient of $h = a^{k}$ in $\left[ \alpha_{1}, \alpha_{4} \right]$ is 
$$
(-1)^{k + 1} \sum_{g \in G \setminus \langle a \rangle} \chi_{3}(g^{-1}) x_{g} x_{g^{-1} h} + \sum_{g \in G \setminus \langle a \rangle} \chi_{3}(g) x_{g} x_{g^{-1} h}. 
$$
\end{lem}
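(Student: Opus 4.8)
The plan is to specialize the formula for $\left[ \alpha_{1}, \alpha_{4} \right]$ supplied by Lemma~$\ref{lem:6.2.9}$ to the group element $h = a^{k}$ and then simplify. Reading off the coefficient of $a^{k}$ from each of the four double sums in Lemma~$\ref{lem:6.2.9}$ yields
\begin{align*}
& \sum_{g \in \langle a \rangle} \chi_{3}(g) x_{g} x_{a^{k} g}
- \sum_{g \in \langle a \rangle} \chi_{3}(g) x_{g} x_{g a^{k}} \\
& \quad - \sum_{g \in G \setminus \langle a \rangle} \chi_{3}(g) x_{g} x_{a^{k} g^{-1}}
+ \sum_{g \in G \setminus \langle a \rangle} \chi_{3}(g) x_{g} x_{g^{-1} a^{k}} .
\end{align*}
So the task reduces to simplifying these four terms to the two asserted ones.

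First I would cancel the two sums over $\langle a \rangle$: since $a^{k}$ commutes with every $g \in \langle a \rangle$, we have $x_{a^{k} g} = x_{g a^{k}}$ term by term, so those two sums are identical and subtract to zero. The fourth sum, $\sum_{g \in G \setminus \langle a \rangle} \chi_{3}(g) x_{g} x_{g^{-1} a^{k}}$, is already exactly the second term in the claim (with $h = a^{k}$). It then remains only to rewrite the third sum $-\sum_{g \in G \setminus \langle a \rangle} \chi_{3}(g) x_{g} x_{a^{k} g^{-1}}$ in the form $(-1)^{k+1}\sum_{g \in G \setminus \langle a \rangle} \chi_{3}(g^{-1}) x_{g} x_{g^{-1} a^{k}}$. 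For this I would apply the change of summation variable $g \mapsto a^{k} g^{-1}$, which is a bijection of $G \setminus \langle a \rangle$ onto itself because $a^{k} \in \langle a \rangle$ and inversion preserves $G \setminus \langle a \rangle$ (Lemmas~$\ref{lem:4.1.1}$ and~$\ref{lem:4.2.1}$); under it the old summation variable is expressed as $g^{-1} a^{k}$ in terms of the new one, so the old $\chi_{3}$-value becomes $\chi_{3}(g^{-1} a^{k}) = (-1)^{k}\chi_{3}(g^{-1})$, using $\chi_{3}(a^{k}) = (-1)^{k}$ from the character tables of Section~4, while commutativity of the indeterminates $x_{g}$ lets us pair the two factors in the order $x_{g} x_{g^{-1} a^{k}}$. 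Adding the transformed third sum to the fourth sum produces exactly the claimed expression.

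The computation is entirely routine; the one point requiring a little care is the bookkeeping of the substitution — checking that $g \mapsto a^{k} g^{-1}$ is genuinely a self-bijection of the coset $G \setminus \langle a \rangle$ and that it introduces precisely the scalar $(-1)^{k}$ in the $\chi_{3}$-factor. There is no conceptual obstacle: everything follows from Lemma~$\ref{lem:6.2.9}$, the mutual commutativity of the elements of $\langle a \rangle$, the value $\chi_{3}(a^{k}) = (-1)^{k}$, and the commutativity of the variables $x_{g}$.
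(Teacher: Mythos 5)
Your proposal is correct and follows essentially the same route as the paper: read off the coefficient of $a^{k}$ from Lemma~\ref{lem:6.2.9}, cancel the two sums over $\langle a \rangle$ using the fact that $a^{k}$ commutes with $\langle a \rangle$, and rewrite the remaining sum over $G \setminus \langle a \rangle$ by the substitution $g \mapsto g^{-1}h$ (your $g \mapsto a^{k}g^{-1}$ is just the inverse description of the same bijection), extracting the factor $\chi_{3}(a^{k}) = (-1)^{k}$. No substantive difference from the paper's argument.
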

\begin{proof}
We compute 
\begin{align*}
&\sum_{g \in \langle a \rangle} \chi_{3}(g) x_{g} x_{h g} - \sum_{g \in G \setminus \langle a \rangle} \chi_{3}(g) x_{g} x_{h g^{-1}} \\ 
\quad \quad & - \sum_{g \in \langle a \rangle} \chi_{3}(g) x_{g} x_{g h} + \sum_{g \in G \setminus \langle a \rangle} \chi_{3}(g) x_{g} x_{g^{-1} h} \\ 
= & - \sum_{g \in G \setminus \langle a \rangle} \chi_{3}(g) x_{g} x_{h g^{-1}} + \sum_{g \in G \setminus \langle a \rangle} \chi_{3}(g) x_{g} x_{g^{-1} h} \\ 
= & - \sum_{g \in G \setminus \langle a \rangle} \chi_{3}(g^{-1} h) x_{g^{-1} h} x_{h h^{-1} g} 
+ \sum_{g \in G \setminus \langle a \rangle} \chi_{3}(g) x_{g} x_{g^{-1} h} \\ 
= & \; (-1)^{k + 1} \sum_{g \in G \setminus \langle a \rangle} \chi_{3}(g^{-1}) x_{g} x_{g^{-1} h} 
+ \sum_{g \in G \setminus \langle a \rangle} \chi_{3}(g) x_{g} x_{g^{-1} h}
\end{align*}
as required. 
\end{proof}

From Lemma $\ref{lem:6.2.10}$, we know that 
$$
\left[ \alpha_{1}, \alpha_{4} \right] \neq 0. 
$$
From Theorems~$\ref{thm:6.2.3}$ and~$\ref{thm:6.2.4}$, 
we think that degree one representations form natural pairing $(\chi_{1}, \chi_{2})$ and $(\chi_{3}, \chi_{4})$. 
Furthermore, we will have another interesting algebraic properties soon. 

\begin{lem}\label{lem:6.2.11}
The coefficient of $h = a^{k} b$ in $\left[ \alpha_{1}, \alpha_{4} \right]$ is 
\begin{enumerate}
\item For the case that $m$ is odd. 
$$
\begin{cases} 
- (1 + i) \displaystyle\sum_{g \in \langle a \rangle} \chi_{3}(g) x_{g} x_{g h} + (1 + i) \displaystyle\sum_{g \in \langle a \rangle} \chi_{3}(g) x_{g} x_{g^{-1} h}, 
 & {\rm if} \; k \; {\rm is \; odd}. \\
- (1 - i) \displaystyle\sum_{g \in \langle a \rangle} \chi_{3}(g) x_{g} x_{g h} + (1 - i) \displaystyle\sum_{g \in \langle a \rangle} \chi_{3}(g) x_{g} x_{g^{-1} h}, 
& {\rm if} \; k \; {\rm is \; even}. 
\end{cases} 
$$
\item For the case that $m$ is even. 
$$
\begin{cases}
2 \displaystyle\sum_{g \in G} \chi_{3}(g) x_{g} x_{g^{-1} h}, & {\rm if} \; $k$ \; \rm{is \; odd}. \\ 
0, & {\rm if} \; $k$ \; {\rm is \; even}. 
\end{cases}
$$
\end{enumerate}
\end{lem}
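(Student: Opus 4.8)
The plan is to extract the coefficient of $h = a^{k}b \in G \setminus \langle a \rangle$ directly from the formula for $[\alpha_{1}, \alpha_{4}]$ established in Lemma~\ref{lem:6.2.9}. Reading off that coefficient gives
\begin{align*}
&\sum_{g \in \langle a \rangle} \chi_{3}(g) x_{g} x_{hg} - \sum_{g \in G \setminus \langle a \rangle} \chi_{3}(g) x_{g} x_{hg^{-1}} \\
&\qquad - \sum_{g \in \langle a \rangle} \chi_{3}(g) x_{g} x_{gh} + \sum_{g \in G \setminus \langle a \rangle} \chi_{3}(g) x_{g} x_{g^{-1}h},
\end{align*}
and I would simplify these four sums one at a time. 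In the first sum $g \in \langle a \rangle$ and $h \notin \langle a \rangle$, so $hg = g^{-1}h$ and it already equals $\sum_{g \in \langle a \rangle} \chi_{3}(g) x_{g} x_{g^{-1}h}$; the third sum is already in the shape appearing in the statement.

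For the two sums over $G \setminus \langle a \rangle$ I would substitute $g = g_{0}b$ with $g_{0}$ running over $\langle a \rangle$, use $\chi_{3}(g_{0}b) = \chi_{3}(b)\chi_{3}(g_{0})$, and compute $hg^{-1} = a^{k}g_{0}^{-1}$ and $g^{-1}h = a^{-k}g_{0}$ from $b^{-1}ab = a^{-1}$. Re-indexing by $g_{1} = a^{k}g_{0}^{-1}$ in the second sum and by $g_{1} = a^{-k}g_{0}$ in the fourth, and using $\chi_{3}(a^{k}) = (-1)^{k}$, $\chi_{3}(g_{1}^{-1}) = \chi_{3}(g_{1})$ for $g_{1} \in \langle a \rangle$, together with the fact that the $x_{\bullet}$ are commuting scalars, the second sum turns into $\chi_{3}(b)(-1)^{k}\sum_{g \in \langle a \rangle}\chi_{3}(g)x_{g}x_{g^{-1}h}$ and the fourth into $\chi_{3}(b)(-1)^{k}\sum_{g \in \langle a \rangle}\chi_{3}(g)x_{g}x_{gh}$. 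Collecting the four contributions, the coefficient of $h = a^{k}b$ becomes
$$
\bigl(1 - \chi_{3}(b)(-1)^{k}\bigr)\left( \sum_{g \in \langle a \rangle}\chi_{3}(g)x_{g}x_{g^{-1}h} - \sum_{g \in \langle a \rangle}\chi_{3}(g)x_{g}x_{gh} \right).
$$

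I would then conclude by a case analysis using the character tables of Section~4. When $m$ is odd we are dealing with $G = Q_{m}$ and $\chi_{3}(b) = i$, so $1 - \chi_{3}(b)(-1)^{k}$ equals $1+i$ when $k$ is odd and $1-i$ when $k$ is even; distributing this scalar gives case (1). When $m$ is even, $\chi_{3}(b) = 1$, so the scalar is $1 - (-1)^{k}$, which is $0$ for $k$ even---the second line of case (2)---and $2$ for $k$ odd. In the latter subcase one last rewriting is needed: by the computation of the fourth sum above, for $k$ odd and $m$ even $\sum_{g \in G \setminus \langle a \rangle}\chi_{3}(g)x_{g}x_{g^{-1}h} = -\sum_{g \in \langle a \rangle}\chi_{3}(g)x_{g}x_{gh}$, hence $\sum_{g \in \langle a \rangle}\chi_{3}(g)x_{g}x_{g^{-1}h} - \sum_{g \in \langle a \rangle}\chi_{3}(g)x_{g}x_{gh} = \sum_{g \in G}\chi_{3}(g)x_{g}x_{g^{-1}h}$, which produces the first line of case (2).

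The argument requires no idea beyond Lemma~\ref{lem:6.2.9} and the explicit characters of Section~4; it is the exact analogue of the proof of Lemma~\ref{lem:6.2.8}. The only thing to watch is the bookkeeping---correctly expressing $hg^{-1}$ and $g^{-1}h$ in terms of $g_{0}$ after the substitution $g = g_{0}b$, carrying the factor $\chi_{3}(b)$ (which is $i$ for $Q_{m}$ with $m$ odd but $1$ in all the even cases), and, in the final subcase, recognizing that the two sums over $\langle a \rangle$ recombine into a single sum over $G$.
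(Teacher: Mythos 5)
Your proposal is correct and follows essentially the same route as the paper's proof: read the coefficient of $h = a^{k}b$ off Lemma~\ref{lem:6.2.9}, rewrite the two sums over $G \setminus \langle a \rangle$ via the substitution $g = g_{0}b$ and re-indexing over $\langle a \rangle$, and then split into cases according to the value of $\chi_{3}(b)$. Your uniform factorization of the scalar $1 - \chi_{3}(b)(-1)^{k}$ before the case analysis is just a slightly cleaner packaging of the same computation.
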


\begin{proof}
\allowdisplaybreaks 
\begin{enumerate}
\item For the case that $m$ is odd, 
we compute 
\begin{align*}
&\sum_{g \in \langle a \rangle} \chi_{3}(g) x_{g} x_{h g} - \sum_{g \in G \setminus \langle a \rangle} \chi_{3}(g) x_{g} x_{h g^{-1}} \\ 
& \quad \quad - \sum_{g \in \langle a \rangle} \chi_{3}(g) x_{g} x_{g h} + \sum_{g \in G \setminus \langle a \rangle} \chi_{3}(g) x_{g} x_{g^{-1} h} \\
& \quad = \sum_{g \in \langle a \rangle} \chi_{3}(g) x_{g} x_{g^{-1} h} - \sum_{g \in \langle a \rangle} \chi_{3}(g b) x_{g b} x_{h b^{-1} g^{-1}} \\ 
& \quad \quad - \sum_{g \in \langle a \rangle} \chi_{3}(g) x_{g} x_{g h} + \sum_{g \in \langle a \rangle} \chi_{3}(g b) x_{g b} x_{b^{-1} g^{-1} h} \\ 
& \quad = \sum_{g \in \langle a \rangle} \chi_{3}(g) x_{g} x_{g^{-1} h} - i \sum_{g \in \langle a \rangle} \chi_{3}(g) x_{g b} x_{a^{k} g^{-1}} \\
& \quad \quad - \sum_{g \in \langle a \rangle} \chi_{3}(g) x_{g} x_{g h} + i \sum_{g \in \langle a \rangle} \chi_{3}(g) x_{g b} x_{a^{-k} g} \\ 
& \quad = \sum_{g \in \langle a \rangle} \chi_{3}(g) x_{g} x_{g^{-1} h} - i \sum_{g \in \langle a \rangle} \chi_{3}(g a^{k}) x_{g a^{k} b} x_{g^{-1}} \\ 
& \quad \quad - \sum_{g \in \langle a \rangle} \chi_{3}(g) x_{g} x_{g h} + (-1)^{k} i \sum_{g \in \langle a \rangle} \chi_{3}(g) x_{g h} x_{g} \\ 
& \quad = \sum_{g \in \langle a \rangle} \chi_{3}(g) x_{g} x_{g^{-1} h} - (-1)^{k} i \sum_{g \in \langle a \rangle} \chi_{3}(g) x_{g h} x_{g^{-1}} \\ 
& \quad \quad - \sum_{g \in \langle a \rangle} \chi_{3}(g) x_{g} x_{g h} + (-1)^{k} i \sum_{g \in \langle a \rangle} \chi_{3}(g) x_{g} x_{g h} 
\end{align*}
as required. 
\item For the case that $m$ is even, 
we compute 
\begin{align*} 
&\sum_{g \in \langle a \rangle} \chi_{3}(g) x_{g} x_{h g} - \sum_{g \in G \setminus \langle a \rangle} \chi_{3}(g) x_{g} x_{h g^{-1}} \\ 
& \quad \quad - \sum_{g \in \langle a \rangle} \chi_{3}(g) x_{g} x_{g h} + \sum_{g \in G \setminus \langle a \rangle} \chi_{3}(g) x_{g} x_{g^{-1} h} \\ 
& \quad = \sum_{g \in \langle a \rangle} \chi_{3}(g) x_{g} x_{g^{-1} h} - \sum_{g \in \langle a \rangle} \chi_{3}(g b) x_{g b} x_{h b^{-1} g^{-1}} \\ 
& \quad \quad - \sum_{g \in \langle a \rangle} \chi_{3}(g b) x_{b^{-1} b g} x_{g h} + \sum_{g \in G \setminus \langle a \rangle} \chi_{3}(g) x_{g} x_{g^{-1} h} \\ 
& \quad = \sum_{g \in G} \chi_{3}(g) x_{g} x_{g^{-1} h} - \sum_{g \in \langle a \rangle} \chi_{3}(g) x_{g b} x_{a^{k} g^{-1}} \\ 
& \quad \quad - \sum_{g \in \langle a \rangle} \chi_{3}(g b) x_{b^{-1} g^{-1} b} x_{g a^{k} b} \\ 
& \quad = \sum_{g \in G} \chi_{3}(g) x_{g} x_{g^{-1} h} - \sum_{g \in \langle a \rangle} \chi_{3}(g a^{k}) x_{g a^{k} b} x_{g^{-1}} \\ 
& \quad \quad - \sum_{g \in \langle a \rangle} \chi_{3}(g a^{-k} b) x_{b^{-1} g^{-1} a^{k} b} x_{g b} \\ 
& \quad = \sum_{g \in G} \chi_{3}(g) x_{g} x_{g^{-1} h} + (-1)^{k + 1} \sum_{g \in \langle a \rangle} \chi_{3}(g) x_{g} x_{g^{-1} h} \\ 
& \quad \quad + (-1)^{k + 1} \sum_{g \in G \setminus \langle a \rangle} \chi_{3}(g) x_{g^{-1} h} x_{g} \\ 
& \quad = \sum_{g \in G} \chi_{3}(g) x_{g} x_{g^{-1} h} + (-1)^{k + 1} \sum_{g \in G} \chi_{3}(g) x_{g} x_{g^{-1} h} 
\end{align*}
as required. 
\end{enumerate}
\end{proof}

We have the following algebraic properties. 

\begin{thm}\label{thm:6.2.13}
The formulas 
\begin{align*}
\left[\alpha_{1}, \alpha_{3} + \alpha_{4} \right] &= 0, \\
\left[\alpha_{2}, \alpha_{3} + \alpha_{4} \right] &= 0, \\ 
\left[\alpha_{3}, \alpha_{1} + \alpha_{2} \right] &= 0, \\ 
\left[\alpha_{4}, \alpha_{1} + \alpha_{2} \right] &= 0 
\end{align*}
hold. 
\end{thm}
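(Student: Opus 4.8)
The plan is to prove the first identity $[\alpha_1, \alpha_3 + \alpha_4] = 0$ directly by a coefficient comparison, and then obtain the other three from it by the substitution lemmas of Section~6.1. Since the commutator is bilinear (over $\mathbb{C}$, indeed over the polynomial ring in the $x_g$), we have $[\alpha_1, \alpha_3 + \alpha_4] = [\alpha_1, \alpha_3] + [\alpha_1, \alpha_4]$, so it suffices to show that these two commutators are negatives of one another in $\mathbb{C}G$. Every $h \in G$ is uniquely of the form $a^{k}$ or $a^{k}b$ with $0 \le k \le \vert \langle a \rangle \vert - 1$, so I would compare the coefficient of $h = a^{k}$ and of $h = a^{k}b$ separately.

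For $h = a^{k}$, Lemma~\ref{lem:6.2.7} gives the coefficient of $h$ in $[\alpha_1, \alpha_3]$ and Lemma~\ref{lem:6.2.10} gives the coefficient of $h$ in $[\alpha_1, \alpha_4]$; these two expressions are visibly opposite (the $\chi_{3}(g^{-1})$-sums carry the opposite signs $(-1)^{k}$ and $(-1)^{k+1}$, and the $\chi_{3}(g)$-sums carry opposite signs), so they cancel. For $h = a^{k}b$ I would invoke Lemmas~\ref{lem:6.2.8} and~\ref{lem:6.2.11}, splitting into the cases $m$ odd (where only $G = Q_{m}$ carries the factors $\alpha_3,\alpha_4$) and $m$ even, and within each case into $k$ odd and $k$ even; in each of the four resulting subcases the two coefficient formulas are again exact negatives, namely $\pm(1+i)$ against $\mp(1+i)$, $\pm(1-i)$ against $\mp(1-i)$, and $\mp 2$ against $\pm 2$ (or $0$ against $0$). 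Hence every coefficient of $[\alpha_1,\alpha_3] + [\alpha_1,\alpha_4]$ vanishes, giving $[\alpha_1, \alpha_3 + \alpha_4] = 0$.

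For the remaining three identities I would use that a change of the variables $x_{g}$ carries a valid identity in $\mathbb{C}G$ with polynomial coefficients to another valid identity, with each factor $\alpha_i$ replaced according to Lemmas~\ref{lem:6.1.1} and~\ref{lem:6.1.2}. Applying the substitution of Lemma~\ref{lem:6.1.1}, which interchanges $\alpha_1 \leftrightarrow \alpha_2$ and $\alpha_3 \leftrightarrow \alpha_4$ and hence fixes $\alpha_3 + \alpha_4$, transforms $[\alpha_1, \alpha_3 + \alpha_4] = 0$ into $[\alpha_2, \alpha_3 + \alpha_4] = 0$. Applying the substitution of Lemma~\ref{lem:6.1.2}, which sends $\alpha_1 \mapsto \alpha_3$, $\alpha_2 \mapsto \alpha_4$ and $\alpha_3 + \alpha_4 \mapsto \alpha_1 + \alpha_2$, transforms $[\alpha_1, \alpha_3 + \alpha_4] = 0$ into $[\alpha_3, \alpha_1 + \alpha_2] = 0$; and applying the substitution of Lemma~\ref{lem:6.1.1} to this last identity (it sends $\alpha_3 \mapsto \alpha_4$ and fixes $\alpha_1 + \alpha_2$) yields $[\alpha_4, \alpha_1 + \alpha_2] = 0$.

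The one mildly subtle point is conceptual rather than computational: although $[\alpha_1, \alpha_3]$ and $[\alpha_1, \alpha_4]$ are each nonzero (Lemmas~\ref{lem:6.2.7} and~\ref{lem:6.2.10} exhibit nonvanishing coefficients), their obstruction terms cancel exactly. There is no single hard step; the main effort is the purely mechanical bookkeeping of reconciling the index conventions ($x_{gh}$ versus $x_{g^{-1}h}$, the scalar factors $(-1)^{k}$ and $1 \pm i$) across the four preceding lemmas and verifying the parity cases, after which the result falls out.
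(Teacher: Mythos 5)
Your proposal is correct and follows essentially the same route as the paper: the paper likewise deduces $[\alpha_{1},\alpha_{3}+\alpha_{4}]=0$ by combining the coefficient computations of Lemmas~\ref{lem:6.2.7}, \ref{lem:6.2.8}, \ref{lem:6.2.10} and \ref{lem:6.2.11}, and then obtains the remaining three identities from the variable-change Lemmas~\ref{lem:6.1.1} and~\ref{lem:6.1.2}. You have merely made explicit the term-by-term cancellations and the order in which the substitutions are applied, which the paper leaves to the reader.
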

\begin{proof}
By Lemmas~$\ref{lem:6.2.7},\ \ref{lem:6.2.8},\ \ref{lem:6.2.10}$ and $\ref{lem:6.2.11}$, 
we have 
$$
\left[ \alpha_{1}, \alpha_{3} + \alpha_{4} \right] = 0. 
$$
From Lemmas~$\ref{lem:6.1.1}$ and~$\ref{lem:6.1.2}$, 
this completes the proof. 
\end{proof}

\clearpage

\thanks{\bf{Acknowledgments}}
I am deeply grateful to Prof. Hiroyuki Ochiai and Prof. Minoru Itoh who provided the helpful comments and suggestions. 
Also, I would like to thank my college in the Graduate School of Mathematics of Kyushu University, 
in particular Cid Reyes, Tomoyuki Tamura, Yuka Suzuki for comments and suggestions. 
I would also like to express my gratitude to my family for their moral support and warm encouragements.

\medskip
\begin{flushleft}
Naoya Yamaguchi\\
Graduate School of Mathematics\\
Kyushu University\\
Nishi-ku, Fukuoka 819-0395 \\
Japan\\
n-yamaguchi@math.kyushu-u.ac.jp
\end{flushleft}

\end{document}